\definecolor{mains}{cmyk}{.3, .85, .75, 0}  
\definecolor{afb}{rgb}{0.03, 0.27, 0.49}
\definecolor{def}{rgb}{0.27, 0.03, 0.49}
\newcounter{FNC}[page]
\def\fauxfootnote#1{{\addtocounter{FNC}{2}$^\fnsymbol{FNC}$%
     \let\thefootnote\relax\footnotetext{$^\fnsymbol{FNC}$\Magenta{#1}}}}
\numberwithin{equation}{section}
\newtheorem{theorem}{Theorem}[section]
\newtheorem{lemma}[theorem]{Lemma}
\newtheorem{corollary}[theorem]{Corollary}
\newtheorem{thm}[theorem]{Theorem}
\newtheorem{defn}[theorem]{Definition}
\newtheorem{exm}[theorem]{Example}
\newtheorem{rem}[theorem]{Remark}
\newenvironment{definition}[1][]{\rm\begin{defn}[#1]\rm}{\end{defn}}
\newenvironment{example}[1][]{\rm\begin{exm}[#1]\rm}{\end{exm}}
\author{Stefan Forcey} \address[S. Forcey]{
    Department of Mathematics\\
    The University of Akron\\
    Akron, OH 44325-4002
    }
    \email{sforcey@uakron.edu}  \urladdr{http://www.math.uakron.edu/\~{}sf34/}
\author{Drew Scalzo} \address[D. Scalzo]{
    Department of Mathematics\\
    The University of Akron\\
    Akron, OH 44325-4002
    }
\title[Galois Connections]{Galois connections for  phylogenetic networks  and their polytopes}
\keywords{phylogenetics, polytope, neighbor joining, facets}
\subjclass[2000]{90C05, 52B11, 92D15}
\begin{document}

\begin{abstract}
    We describe Galois connections which arise between two kinds of combinatorial structures, both of which generalize trees with labelled leaves, and then apply those connections to a family of polytopes.  
    
    The graphs we study can be imbued with metric properties or associated to vectors. Famous examples are the Billera-Holmes-Vogtmann metric space of phylogenetic trees, and the Balanced Minimal Evolution polytopes of phylogenetic trees described by Eickmeyer, Huggins, Pachter and Yoshida. Recently the space of trees has been expanded to split networks by Devadoss and Petti, while the definition of phylogenetic polytopes has been generalized to encompass 1-nested phylogenetic networks, by Durell and Forcey. The first Galois connection we describe is a reflection between the (unweighted) circular split networks and the 1-nested phylogenetic networks. Another Galois connection exists between certain metric versions of these structures. Reflection between the purely combinatorial posets becomes a coreflection in the geometric case.
    
    Our chief contributions here, beyond the discovery of the Galois connections, are: a translation between approaches using PC-trees and networks, a new way to look at weightings on networks, and a fuller characterization of faces of the phylogenetic polytopes.  
    

 \end{abstract}
\keywords{polytopes, phylogenetics, trees, metric spaces}
\maketitle

\section{Introduction}The simplest structure we consider here is a partition of a finite set into two parts.  A tree with labeled leaves represents a collection of such bipartitions, since removing any edge of the tree partitions the leaves via separating the tree into two components. A collection of bipartitions that can be thus displayed by a tree is called \emph{pairwise compatible}.  In this paper we study two generalizations of labeled trees which display larger, more general sets of bipartitions.
 
The motivation for many of our definitions and results comes from phylogenomics. The goal is to recreate ancestral relationships using genetic data from individuals or species available today (extant taxa). The process begins with models  of gene mutation, which allow the measurement of genetic distance between taxa. Euclidean embeddings of simple graphs, called circular split networks, allow such metrics to be visualized in terms of discrete amounts of genetic distance assigned to splits between subsets of taxa. In contrast, another type of semi-labeled simple graphs, called phylogenetic networks, model the actual hereditary relationships by assigning genetic distances to directed edges. In this paper we study the interplay of those two pictures.

Figure~\ref{nuexamplo} exhibits two phylogenetic networks,  showing relationships between taxa numbered $1\dots 8$. On the left is shown an example of the undirected weighted graphs that we will discuss, and on the right is a directed network which exhibits how that graph might be interpreted in a biological setting. To achieve the directed version an \emph{outgroup} taxon is usually pre-selected; it is known to be relatively unrelated to the others. That selection allows a root node to be designated as the only source in the directed graph---it is the node at which the outgroup is attached.  

\begin{figure}
    \centering
   \includegraphics[width=\textwidth]{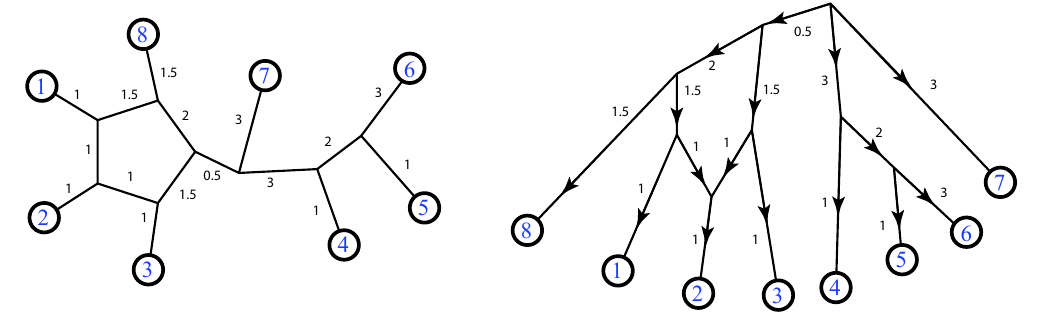}
    \caption{A weighted 1-nested phylogenetic network, unrooted on the left, and with a root node (the only source) on the right. On the right leaf 7 is the outgroup, and edges are given compatible directions. (Not all those directions are completely determined by the choice of root.) }
    \label{nuexamplo}
\end{figure}

\begin{figure}
    \centering
    \includegraphics[width=\textwidth]{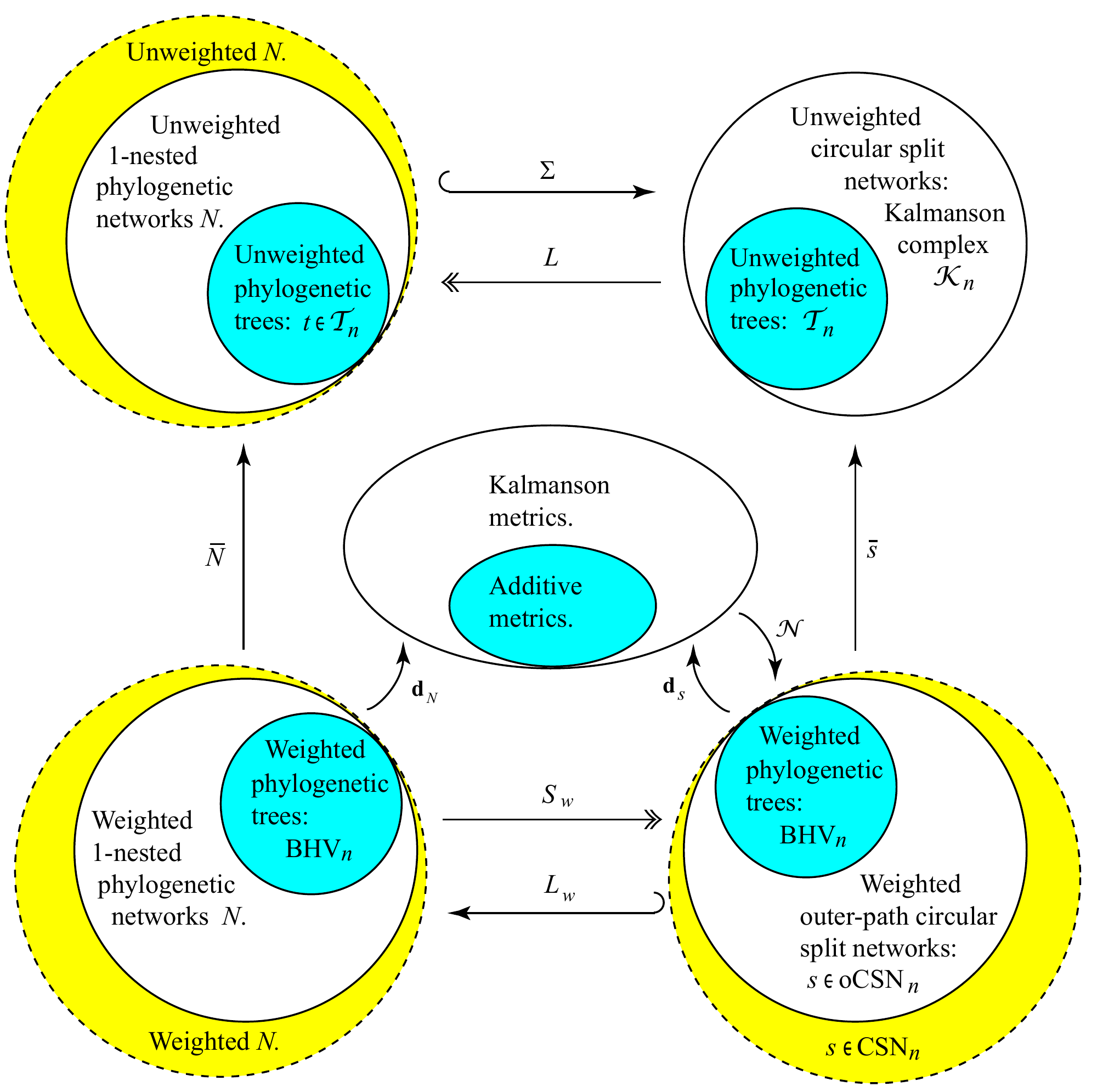}
    \caption{The sets and functions in this paper: two types of unweighted networks and their weighted versions; and the associated injections and surjections. Smaller circles are included sets:  trees are special networks. The four horizontal maps have domain and range shown as unshaded circles. When restricted to trees the four horizontal maps are identities. The vertical maps, shown via an overline, denote the forgetting of weights.}
    \label{fig:mapo}
\end{figure}

We will now leave the applications however, and focus on the underlying combinatorics. Figure~\ref{fig:mapo} shows the combinatorial structures and maps we will discuss, along with the notation used herein, for reference. The metric spaces of weighted trees and circular split networks are known as BHV${}_n$ and CSN${}_n$ respectively, studied by Billera, Holmes and Vogtmann in \cite{bhv} and by Devadoss and Petti in \cite{dev-petti}.  In Section~\ref{defs} we define the individual structures at the top of the figure: unweighted 1-nested phylogenetic networks and circular split networks. The former are studied in \cite{Gambette2017} and a simplicial complex of the latter, the Kalmanson complex $\mathcal{K}_n$, is studied in  \cite{terhorst} and \cite{dev-petti}.  In Section~\ref{order} we show how to partially order those sets. In Section~\ref{functions} we define the functions $L$ and $\Sigma$ that make up our first Galois connection, in Theorem~\ref{unwgalois}.

In Section~\ref{sec_weight} we consider the same combinatorial structures, but equipped with real-valued weighting functions. Weights are crucial to the biological applications. The second Galois connection is described between weighted phylogenetic networks and weighted split networks,  in Theorem~\ref{wgalois}.

In Section~\ref{sec_poly} we review the recent results describing the Balanced Minimum Evolution (BME) polytopes for networks,  whose faces include 1-nested networks. Then we use the Galois connections to prove, in Theorems~\ref{newth} and~\ref{subth}, that given a weighted phylogenetic network there is an associated vector whose dot product with vertices of the BME polytopes is minimized at faces corresponding to  unweighted refinements of that network.

\section{Definitions and Lemmas}\label{defs}
For convenience we work with the set $[n]=\{1,\dots,n\}.$ A \textcolor{def}{\emph{split}} $A|B$ is a bipartition of $[n].$  If one part of a split has only a single element, we call that split \textcolor{def}{\emph{trivial.}} A \textcolor{def}{\emph{split system}}  is a set $s$ of splits of $[n]$ which contains all the trivial splits.  There are $\displaystyle{2^{(2^{n-1}-n-1)}}$ such systems, for $n\ge 3$. This sequence begins 1, 8, 1024,..., and is found as A076688 in \cite{oeis}. We say a split system $s$  \textcolor{def}{\emph{refines}} another split system $s'$ when $s \supset s'$.

In this paper all graphs are simple (no multi-edges) and connected.
\begin{definition}
An  \textcolor{def}{\emph{(unrooted) phylogenetic network}} on $[n]$ is a simple connected graph with $n$ degree-1 nodes labeled bijectively by the elements of $[n]$, and all other nodes unlabeled and with degree larger than 2.
\end{definition} 

A split $A|B$ is \textcolor{def}{\emph{displayed}} by such a phylogenetic network $N$ if there is at least one subset of edges of $N$ whose deletion (keeping all nodes) results in two connected components  with respective labeled nodes the two parts of that split. We call that collection of edges a \textcolor{def}{\emph{minimal cut}} for the split when it contains no proper subset producing the same split.  A \textcolor{def}{\emph{bridge}} is a single edge which displays a split. A \textcolor{def}{\emph{trivial bridge}} displays a trivial split. A \textcolor{def}{\emph {phylogenetic tree}} is a cycle-free network, so every edge is a bridge. 
\begin{figure}
    \centering
   \includegraphics[width=\textwidth]{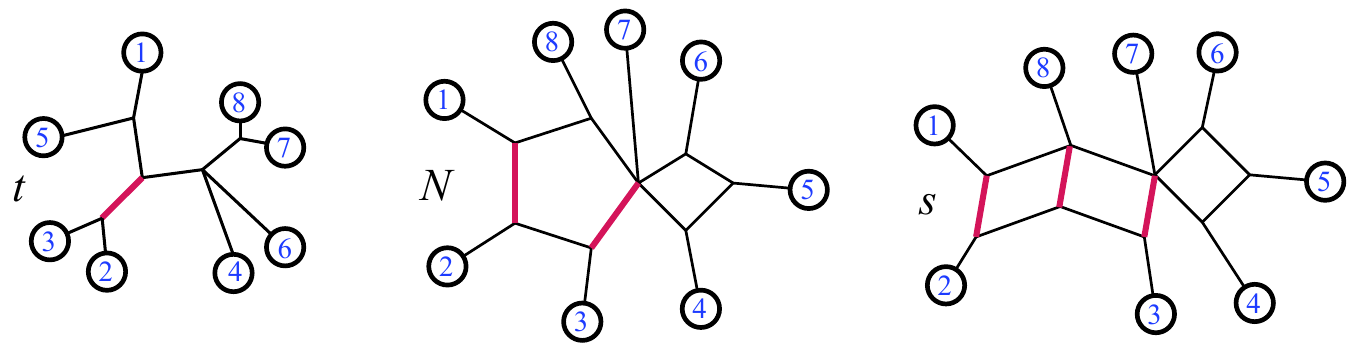}
    \caption{In a phylogenetic tree $t$, on the left, splits are always single edges. The highlighted edge is the split $\{2,3\}|\{1,4,5,6,7,8\}.$ That same split is a pair of edges making a minimal cut in the 1-nested phylogenetic network $N$, center. Finally on the right, that same split is a set of parallel edges in a circular split network $s$.}
    \label{splits}
\end{figure}

The unrooted phylogenetic networks are often classified by complexity. This classification is usually based on the \textcolor{def}{\emph{bridge-free}} components, that is, the subgraphs (with more than one node) which remain after removing all the bridges. The following
 is defined in \cite{gambette-huber}: 
 \begin{definition}
 An unrooted phylogenetic network is called \textcolor{def}{\emph{1-nested}}  when each edge is contained in at most one cycle (recall that cycles do not revisit nodes), and all cycles are of length greater than 3 edges. This allows for multiple cycles in a bridge-free component to share a node, but not an edge.
 \end{definition}

\subsection{Ordering and Counting: 1-nested Phylogenetic Networks}\label{order}

A split system  is determined by a 1-nested phylogenetic network, but not uniquely. 
We begin by considering two such networks as equivalent if they display the same set of splits. Otherwise they are ordered by inclusion.
\begin{definition}
The poset of 1-nested phylogenetic networks is defined as follows: $N\le N'$ precisely when all the splits displayed  by $N$ are also displayed by $N'.$ Also $N \cong N'$ if $N \le N'$ and $N'\le N.$   
\end{definition}

  Clearly this is a partial order by construction, since the the networks are equivalent precisely when their sets of splits are equal. An example of the inequality is in Figure~\ref{fig:phylonet_ineq}, and an equivalence is shown in Figure~\ref{fig:mo_examplo}. 
\begin{figure}
    \centering
    \includegraphics[width=5in]{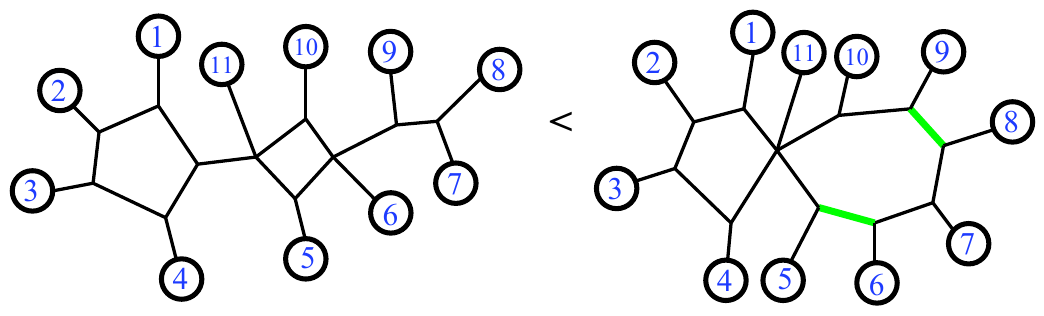}
    \caption{The phylogenetic network on the left displays a proper subset of the splits displayed by the network on the right. For instance the highlighted split $\{1,2,3,4,5,9,10,11\}|\{6,7,8\}$ is only displayed on the right.}
    \label{fig:phylonet_ineq}
\end{figure}  

Two equivalent 1-nested networks are related by the collapse or growth of specific edges: in fact any non-trivial bridge directly attached to a cycle can be collapsed without changing the displayed splits. See Figure~\ref{fig:mo_examplo} for examples. This allows a convenient choice of representative: we can put in place as many nontrivial bridges as possible without adding any splits. Doing so illuminates a bijective correspondence between the equivalence classes of 1-nested phylogenetic trees and $PC$-trees, as indicated in \cite{gambette-huber}. The latter are  leaf-labeled trees with two types of internal nodes: the $P$-nodes are tree-like in that they allow all cyclic permutations of their (3 or more) attached edges, but the $C$-nodes each have a given cyclic order of their (4 or more) attached edges. 
\begin{thm}\label{pctree}
 Split systems on $[n]$ displayed by 1-nested phylogenetic networks are in bijection with $PC$-trees with leaves $[n].$  
\end{thm}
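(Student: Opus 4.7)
The plan is to construct explicit maps in both directions between split systems displayed by 1-nested networks and PC-trees on $[n]$, then verify they are mutually inverse. I will work through a canonical representative of each network equivalence class.

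Given a split system $s$ displayed by some 1-nested network $N$, I construct a PC-tree $\Phi(s)$ as follows. Among all networks displaying $s$, choose one, call it $\bar N$, in which every non-trivial bridge that can be contracted (according to the paragraph preceding the theorem) has been contracted; the observation that such contractions preserve the split system ensures $\bar N$ exists. The 1-nested condition guarantees that cycles share at most a single node, so contracting each cycle of $\bar N$ to a single point produces a tree. I label each contracted cycle as a C-node, endowed with the cyclic order of its attached edges inherited from their attachment points on the original cycle, and I label every other internal node as a P-node. The degree bounds (cycles of length $\ge 4$ and internal nodes of degree $\ge 3$) transfer to the PC-tree axioms.

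Conversely, from a PC-tree $T$, I build a network $\Psi(T)$ by replacing each C-node of degree $k$ with a $k$-cycle, arranging the previously incident edges around the cycle in the prescribed cyclic order, and leaving P-nodes as ordinary branching nodes. Since distinct C-nodes are distinct nodes of $T$, the cycles of $\Psi(T)$ share no edges, and since every C-node has degree $\ge 4$, every cycle has length $\ge 4$, so $\Psi(T)$ is a 1-nested phylogenetic network. Let $\Psi_*(T)$ denote the split system it displays.

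To show these maps are inverse bijections, I will verify that $\Psi_*(T)$ determines $T$ uniquely. The splits in $\Psi_*(T)$ fall into two classes: (i) bridge splits, in bijection with the edges of $T$; and (ii) for each C-node of degree $k$, a family of $\binom{k}{2}$ splits realized as minimal $2$-edge cuts around the corresponding cycle. The two classes are intrinsically distinguishable since the cycle splits are precisely those requiring a minimal cut of size $\geq 2$. Moreover, the family of cycle splits at a single C-node forces its cyclic order (up to dihedral symmetry) via a standard combinatorial reconstruction: two edges of the C-node are cyclically adjacent exactly when no minimal-cut split separates them from everything else. This verifies $\Phi \circ \Psi_* = \mathrm{id}$, and a parallel analysis on $\bar N$ gives $\Psi_* \circ \Phi = \mathrm{id}$.

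The chief obstacle will be establishing that the maximally contracted form $\bar N$ is canonical: namely, that two 1-nested networks displaying the same split system share the same $\bar N$ up to labeled graph isomorphism. This amounts to showing that the displayed splits not only determine the bridge/cycle decomposition, but also pin down the cyclic order at each cycle. Once this canonical-form claim is in place---essentially a structure theorem for minimal $2$-cuts in 1-nested networks---the rest of the argument reduces to tracking how edges, cycles, and their cyclic orderings are reflected in the displayed splits, which is bookkeeping.
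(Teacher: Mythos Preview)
Your canonical representative goes in the wrong direction. The paragraph preceding the theorem observes that non-trivial bridges attached to cycles can be collapsed \emph{or grown} without changing the split system, and the paper takes the representative with \emph{as many non-trivial bridges as possible}, not as few. In that maximal-bridge form every cycle is node-disjoint from every other cycle and every cycle node has degree exactly $3$ (two cycle edges plus one bridge), so replacing each cycle by a $C$-node with the inherited cyclic order is unambiguous. Your minimal-bridge form breaks this in two ways. First, contracting a bridge that joins two cycles makes them share a node; then ``contracting each cycle of $\bar N$ to a single point'' merges those two cycles into a single vertex and destroys the two separate cyclic orders you need for two distinct $C$-nodes. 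Second, after contracting bridges, a single cycle node $v$ can have several branches hanging off it (multiple leaf edges, or whole subtrees); there is no cyclic order among those branches at $v$, so the $C$-node you build from that cycle does not record which branch attached where. Both problems vanish once you switch to the maximal-bridge representative, which is exactly the paper's move.

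A smaller issue in your inverse check: classes (i) and (ii) overlap. Among the $\binom{k}{2}$ two-edge cuts of a $k$-cycle, the $k$ cuts using an adjacent pair of edges isolate a single cycle node and reproduce the bridge split already counted in (i); the genuinely new splits from a $k$-cycle number $\binom{k}{2}-k=k(k-3)/2$. Hence your claim that cycle splits are ``precisely those requiring a minimal cut of size $\ge 2$'' is false as stated, since every bridge split at a cycle node is also realised by such a $2$-cut. This is easily repaired, but the choice of representative above is the substantive gap.
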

\begin{proof}
 The correspondence is by taking a representative $N$ of the equivalence class of 1-nested phylogenetic networks that has as many non-trivial bridges as possible. Then each cycle of $N$ is replaced by a $C$-node with the corresponding cyclic order. The remaining internal nodes are the $P$-nodes. 
\end{proof}
An example of the correspondence is in Figure~\ref{pccorr}. $C$-nodes are drawn as small squares, and $P$-nodes as circles. The network pictures are convenient for visualizing the split systems, but the $PC$-trees are easier to count. (It is the same problem as enumerating the $PQ$-trees, which are the rooted version in bijection with the $PC$-trees as shown in \cite{kleinman}.) This solution is partially found in \cite{bergeron_book}, and the  
exponential generating function $f(x)$ is given in \cite{oeis}:  $$f(x) = g^{-1}(x), \text{ for }
g(x)= \frac{x^3+4x^2-2x-2}{2(x-1)} - e^x.$$

Thus the numbers of 1-nested phylogenetic networks on $[n]$, up to equivalent sets of displayed splits, are 1, 7, 68, 941, 16657, 360151, ... as listed in OEIS entry A136629.

\begin{figure}
    \centering
    \includegraphics[width=\textwidth]{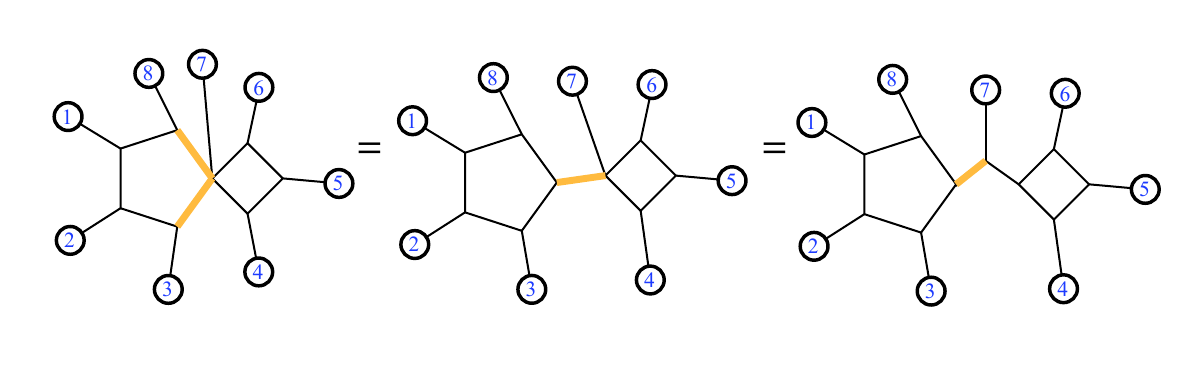}
    \caption{A trio of equivalent 1-nested phylogenetic networks, all representing the same set of splits. The highlighted edges display the same split in each network.}
    \label{fig:mo_examplo}
\end{figure}

\begin{figure}
    \centering
    \includegraphics[width=3.75in]{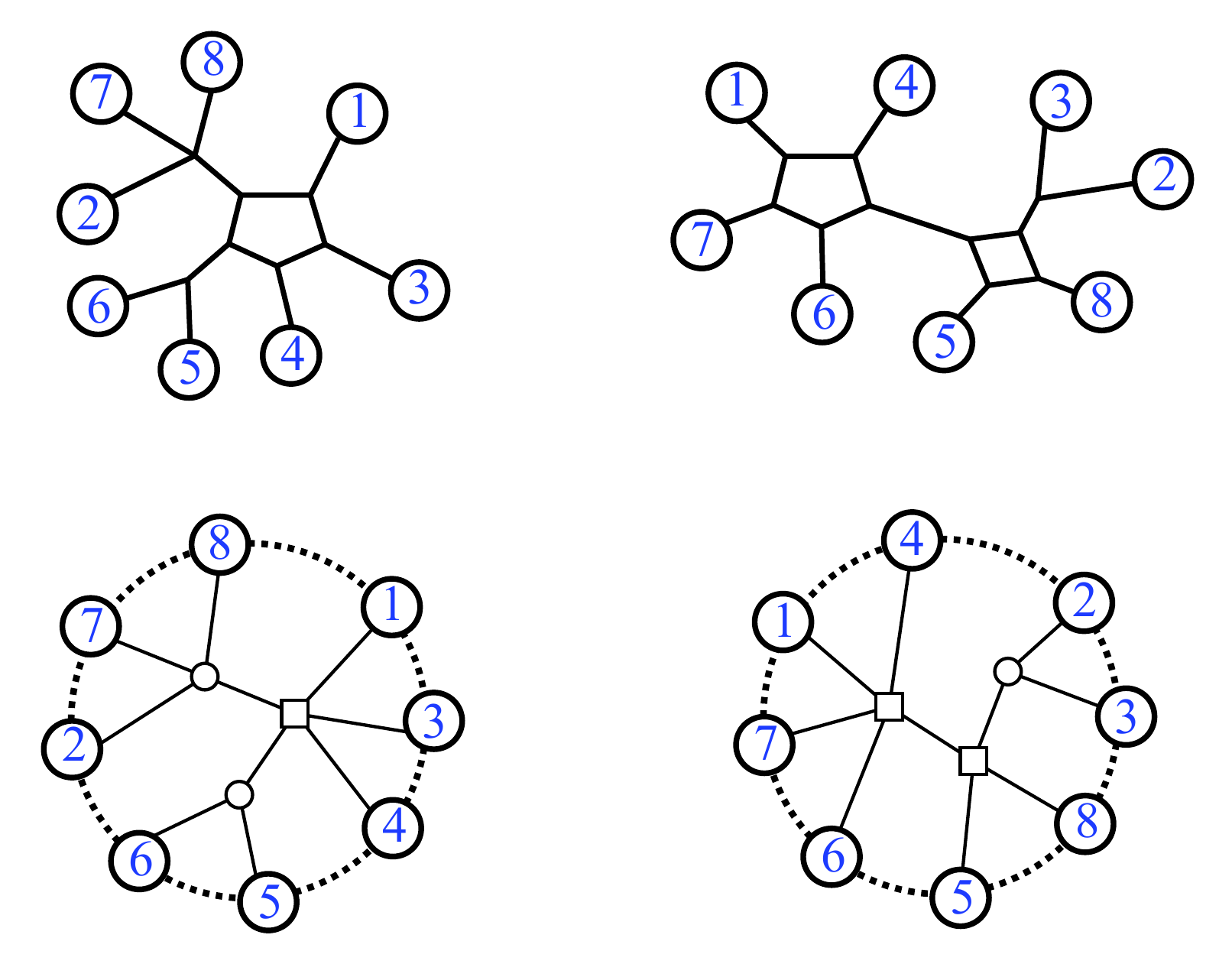}
    \caption{Two 1-nested phylogenetic networks with their corresponding PC-trees beneath.}
    \label{pccorr}
\end{figure}

 A \textcolor{def}{\emph{binary}} phylogenetic network is one in which the unlabeled nodes each have degree 3.
In \cite{newgamb} the authors define \textcolor{def}{\emph{binary level-k}} networks to be those which require no more than $k$ edges to be removed from each biconnected component  before the result is a phylogenetic tree. Here we restrict our attention at first to level-0 and level-1. 
Note that level-0, or 0-nested, networks are a special sort of 1-nested networks:  phylogenetic trees.  Binary level-1 networks are the same as binary 1-nested networks. These are counted in \cite{durell-forcey}, where a formula for the number of binary 1-nested networks on $[n]$ with $k$ nontrivial bridges is found: $${n-3 \choose k}\frac{(n+k-1)!}{(2k+2)!!}$$
Especially note the cases of $k=0,$ where the number becomes $((n-1)!)/2,$ counting the number of cyclic orders; and $k=n-3,$ where the number is $(2n-5)!!$ counting the number of phylogenetic trees. The following is immediate upon inspection of the $PC$-trees that correspond to the binary 1-nested networks
\begin{corollary}
Binary 1-nested  networks are in bijection with $PC$-trees for which the all nodes are cyclic (class $C$) except for the nodes of degree 3 (which are permutable, class $P$.)
\end{corollary}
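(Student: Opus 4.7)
The plan is to apply Theorem~\ref{pctree} and then match up the degree conditions on each side of the correspondence. First, I would verify that a binary 1-nested network $N$ is already its own maximal representative (the one with the greatest number of non-trivial bridges) in its equivalence class, so that the correspondence of Theorem~\ref{pctree} sends $N$ directly to a $PC$-tree with no further collapsing or growing. The key observation is that every cycle node in a binary network has degree exactly $3$: two cycle edges and one exiting edge. This leaves no room to insert an additional non-trivial bridge at that node, since any such growth step would have to redistribute the three incident edges across two new nodes connected by the new bridge, forcing one of the new endpoints to have degree $2$, which is disallowed for internal nodes.

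Next I would read off the resulting $PC$-tree. Each cycle of $N$ becomes a $C$-node whose degree equals the length of the cycle, hence at least $4$ by the 1-nested condition; every remaining internal node of $N$ becomes a $P$-node, and in a binary network such nodes have degree exactly $3$. Thus in the associated $PC$-tree, the internal nodes of degree $3$ are precisely the $P$-nodes and the nodes of higher degree are precisely the $C$-nodes, which is exactly the claimed characterization.

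For the reverse direction, I would take a $PC$-tree $T$ with this property and construct the associated network by replacing each $C$-node with a cycle on its given cyclic order, while leaving each $P$-node alone. Each cycle node acquires degree $3$ (two cycle edges together with the single edge formerly attached to the $C$-node), and each $P$-node retains its degree $3$ from $T$. Every internal node of the resulting network therefore has degree $3$, so the network is binary, and each cycle has length equal to the degree of its originating $C$-node, which is at least $4$, so the network is indeed 1-nested. Combined with Theorem~\ref{pctree}, this establishes the bijection.

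The only step requiring care is the first one: confirming that a binary network cannot be grown further by inserting non-trivial bridges at its cycle attachments, so that its image $PC$-tree is obtained merely by contracting each cycle to a $C$-node. Once this observation is in hand, the remaining degree counting on each side is essentially formal.
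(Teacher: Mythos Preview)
Your argument is correct and follows exactly the route the paper has in mind: the paper offers no proof beyond the remark that the statement is ``immediate upon inspection of the $PC$-trees that correspond to the binary 1-nested networks,'' and your proposal is precisely that inspection written out --- verifying via degree count that a binary network is already its own maximal-bridge representative, so Theorem~\ref{pctree} applies directly, and then matching cycle lengths to $C$-node degrees and tree-node degrees to $P$-node degrees in both directions. The only point you might make explicit is that in a binary $1$-nested network no node can lie on two cycles (else its degree would be at least $4$), which is what guarantees each cycle node has exactly one exiting edge; you use this implicitly, and it is indeed immediate.
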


\subsection{Split networks}
We also will consider another generalization of a phylogenetic tree, in which each split corresponds to a unique set of edges. 

\begin{definition}
A  \textcolor{def}{\emph{split network}} displaying a split system $s$ on $[n]$ is an embedding in Euclidean space of a simple connected graph, also called $s$, with the following:
\begin{enumerate}\item[i.] exactly $n$ degree 1 nodes called leaves labeled by $[n]$, and the other nodes
unlabeled; 
\item[ii.] the set of edges partitioned into classes, one class for each split $A|B$ in the system. It is required that for any two nodes:  the set of edges on a shortest path (of fewest edges) between them intersects each split-class in at most one edge, and that the set of splits thus traversed is the same for any shortest path between those two nodes.
\item[iii.] The class of edges corresponding to a split comprises a minimal cut displaying that split:  deletion of those edges (keeping all nodes) results in two connected components with respective labeled nodes the two parts of that split. 
\end{enumerate}
\end{definition}
Typically each class of edges is embedded as a set of equal length parallel line segments. Alternate definitions use colors;  the edges in a split-class are colored alike, as in, \cite{basic}, \cite{steelphyl}. The resulting graph will be bipartite. 

\begin{figure}
    \centering
    \includegraphics[width=5in]{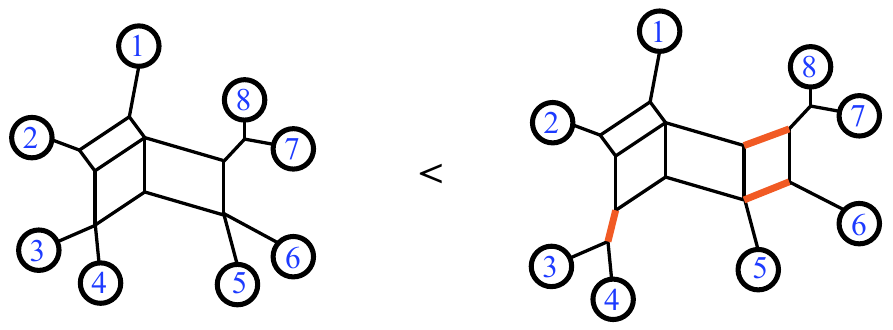}
    \caption{The split network on the left displays a proper subset of the splits displayed by the network on the right. Specifically the two highlighted splits are only displayed on the right.}
    \label{fig:splitnet_ineq}
\end{figure}
Several different split networks may often  be drawn for the same split system, but we consider them equivalent as long as they represent the same splits. Note that in contrast to unrooted phylogenetic networks, the only sort of minimal cut of a split network that is said to display a split is one of the classes of parallel edges. Other cuts are ignored. The fact that for every split system it is possible to construct a split network is due to Buneman. In fact, the most common choice of a representative split network for a given split system is called the \textcolor{def}{\emph{Buneman graph}}, achieved with Buneman's algorithm \cite{basic}. It is a median graph, and is thus a retract of a hypercube 1-skeleton. However in this paper we will only need to consider certain split networks that can be drawn on the plane.

\begin{definition}
A \textcolor{def}{\emph{circular split system}} is a split system which allows the embedding of representative split networks in the plane, with the labeled nodes all on the exterior, and thus arranged in a circular order.
\end{definition}
   \textcolor{def}{\emph{Twisting}} the diagram around a bridge (reflecting one side through the line of the bridge), or around a cut-point node, does not change the list of splits.  Any cyclic order of the leaves allowing an embedding of a split network in the plane is said to be \textcolor{def}{\emph{consistent}} with that system. 

  \begin{figure}
     \centering
     \includegraphics[width=5.25 in]{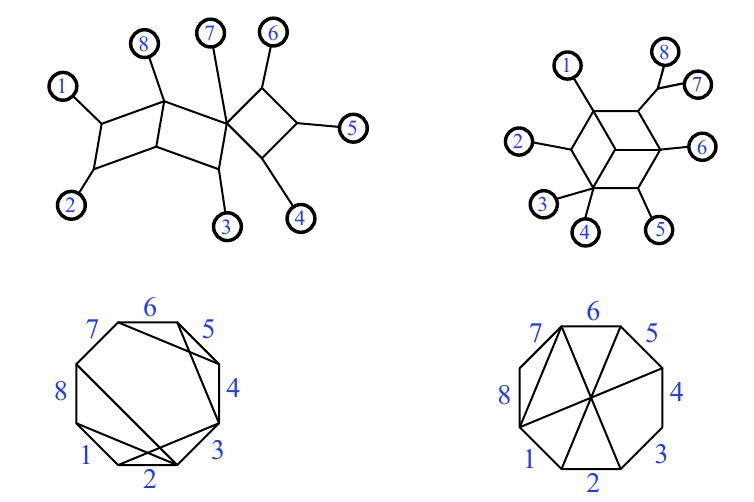}
     \caption{Circular split systems with corresponding polygonal diagrams beneath each.}
     \label{penta}
 \end{figure}

 Alternatively, we can define a circular split system on $[n]$ as follows: we can see the splits by 1) labelling the sides of an $n$-gon with $[n]$ and 2) drawing diagonals for each split. That this is an equivalent description to the above is well known, shown for instance in \cite{dev-petti}. Examples are shown in Figure~\ref{penta}.  

We introduce here a subclass of circular split networks which play an important role as the range of the neighbor-net algorithm when restricted to metrics arising from 1-nested phylogenetic networks. 
\begin{definition}
An \textcolor{def}{\emph{outer-path circular split system}} is a split system whose representative circular split networks have shortest paths between pairs of leaves which can all be chosen to lie on the exterior of the diagram, that is, using only edges adjacent to the exterior. 
\end{definition}
Since the shortest paths are all the same length, this implies that outer-path split networks have no \emph{shortcut}, that is, their is no path between leaves through the  interior of the diagram that is strictly shorter than any path on the exterior. For examples, see Figure~\ref{shorty}.
\begin{figure}
    \centering
    \includegraphics[width=\textwidth]{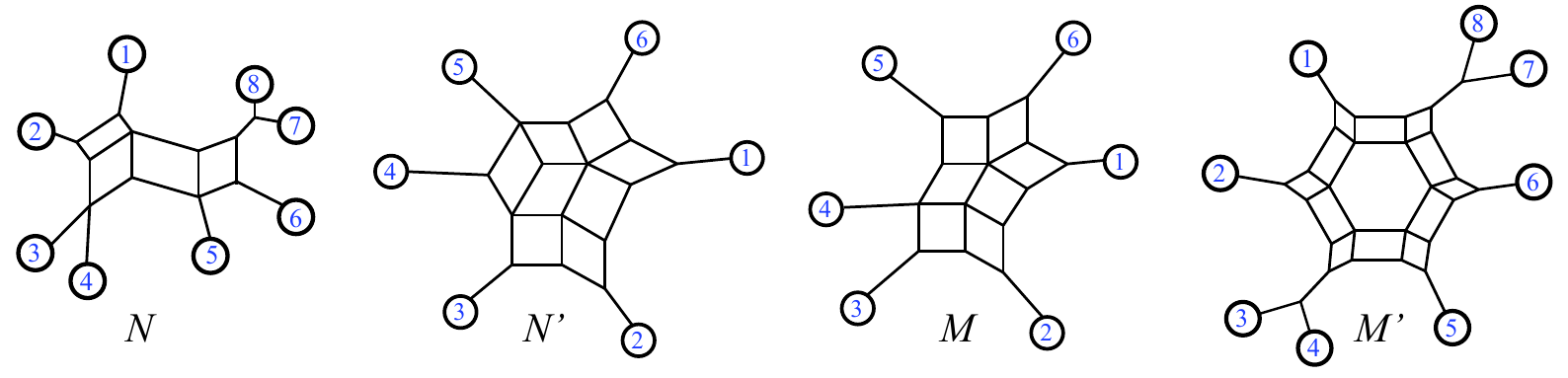}
    \caption{Two outer-path circular split networks on the left, $N$ and $N'$, have all shortest paths represented by exterior paths. Two non-outer-path circular split networks on the right, $M$ and $M'$ have representations which include shortcuts: for instance the path from leaf 1 to 4 in $M$ and the path from 2 to 6 in $M'$. }
    \label{shorty}
\end{figure}

\subsection{Ordering and Counting: Circular Split Networks} In \cite{terhorst} the circular split networks are studied as a poset, the Kalmanson complex. In that source the numbers of circular split networks are seen for $n=3,4,5,$ and $6;$  giving respectively 1, 7, 218, 20816. Some of these totals are enumerated by underlying type of network in \cite{dev-petti}, and by dimension in the simplex, in \cite{terhorst}. This sequence is bounded below for each $n$ by the numbers of 1-nested networks. Circular split networks represent at least as many split systems as do 1-nested networks, for each $[n],$ since the former have a stricter definition of displayed splits. 

Circular split networks are ordered by inclusion of their sets of splits. Also two circular split networks are equivalent if they display the same set of splits. 
\begin{definition}
The poset of circular split networks is defined as follows: $s\le s'$ precisely when all the splits displayed  by $s$ are also displayed by $s'.$ Also $s \cong s'$ if $s \le s'$ and $s'\le s.$   

\end{definition}
\begin{definition}
The poset of outer-path circular split networks is defined as the full sub-poset of circular split networks; the outer-path circular split networks and all the relations that exist between them. 
\end{definition}
An example of the inequality is in Figure~\ref{fig:splitnet_ineq}, and an equivalence is shown in Figure~\ref{fig:mo_examplo_circ}.

\begin{figure}
    \centering
    \includegraphics[width=6in]{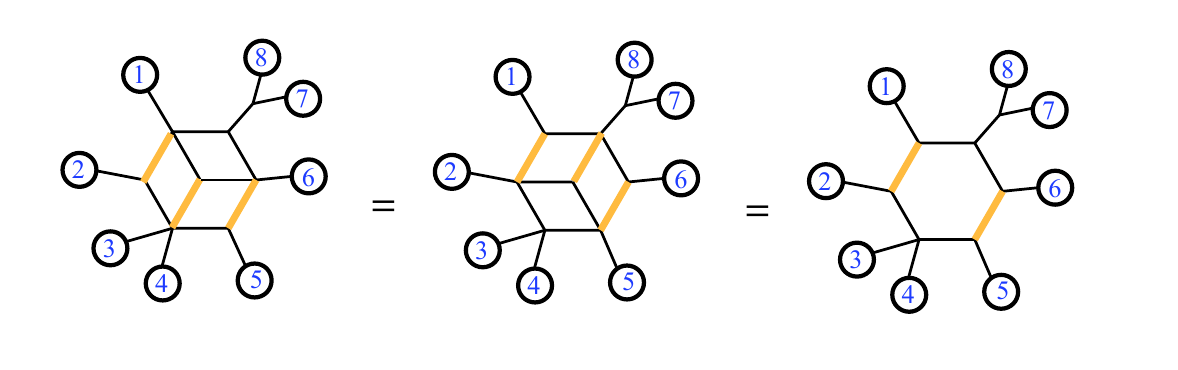}
    \caption{A trio of equivalent split networks, all three representing the same set of splits. The highlighted edges display the same split in each network.}
    \label{fig:mo_examplo_circ}
\end{figure}
Now we list a few lemmas that will be useful in the next section.
\begin{lemma}
Given a circular split network $s$,
the nodes and edges adjacent to the exterior of the graph are a subgraph which is invariant: that is, this \emph{exterior subgraph} will be identical to the exterior subgraph of any circular split network representing the same set of splits as $s$. 
\end{lemma}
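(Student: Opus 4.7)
The plan is to decompose the exterior of any representative split network into the $n$ arcs bounded by consecutive leaves, and then show that each such arc is determined combinatorially by the split system. First, I would invoke the fact (implicit in the preceding discussion of consistent cyclic orders) that a circular split system has a well-defined cyclic order on the leaves $[n]$, unique up to reflection; so any two representative planar embeddings must arrange $1,\dots,n$ in the same cyclic order around the exterior, and in particular they agree on which pairs $(i,i+1)$ are consecutive.

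Fix such a consecutive pair. The exterior path from $i$ to $i+1$ can be taken to be a shortest path in the split network, so by the split-network axioms it crosses exactly one edge from each split class whose two parts separate $i$ from $i+1$; moreover the set of splits so crossed is an invariant of the split system, not of the chosen embedding.

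Next I would verify that these separating splits are totally ordered by inclusion of one of their parts. Each split in a circular system has both parts forming arcs of the cyclic order; the splits separating $i$ from $i+1$ are precisely those having one part equal to an arc ending at $i$ (i.e. extending backward from the gap between $i$ and $i+1$). Any two such arcs both end at $i$, so one is contained in the other, giving a total order by inclusion. Nested splits are pairwise compatible, so their edge classes do not cross in the plane. Tracing the exterior outward from $i$, planarity then forces the edge classes to be traversed in order of increasing arc-size: the path exits the smallest arc first and the largest arc last. Hence the sequence of split-classes, and therefore the sequence of edges and intermediate nodes, appearing on the exterior arc between $i$ and $i+1$ is determined by the split system alone.

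Concatenating these uniquely determined arcs along the common leaves produces the entire exterior subgraph, which is therefore an invariant of the split system. The main obstacle I foresee is the third step: carefully justifying the planarity argument that compatible, nested split-classes must be traversed in nested order along the exterior, and handling edge cases such as trivial splits producing pendant leaves, bridges whose two sides both lie on the exterior, and cut-nodes through which the exterior walk passes more than once.
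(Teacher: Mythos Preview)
Your first step contains a genuine error that undermines the rest of the argument. You assert that a circular split system has a cyclic order on $[n]$ ``unique up to reflection,'' but the paper's own discussion (immediately preceding this lemma) says the opposite: twisting around any bridge or cut-point node yields a different consistent cyclic order while leaving the split system unchanged. So two representative networks for the same split system need \emph{not} put the leaves in the same cyclic order, and hence need not agree on which pairs $(i,i+1)$ are consecutive. Your arc-by-arc decomposition therefore compares different arcs in the two networks, and the argument does not go through as written. You flag bridges and cut-nodes as ``edge cases'' at the end, but they are not peripheral: they are exactly what breaks uniqueness of the cyclic order and hence the backbone of your plan.

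The paper's proof avoids this by first passing to the polygonal representation and using it to read off the block structure of any representative: non-crossed diagonals give the bridges, diagonal-free regions give the cut-point nodes, and each maximal family of mutually crossing diagonals gives a bridge-free, cut-point-free component. The key point is that this block decomposition is determined by the split system itself, so it is the same in every representative; the only freedom left is twisting the blocks around bridges and cut-points, which changes the planar embedding but not the abstract exterior graph. Your nested-arc analysis is essentially correct \emph{inside a single such block} (where the cyclic order really is determined up to reflection), so one way to repair your argument is to first establish the block decomposition from the split system, then apply your arc argument block by block, and finally observe that gluing along bridges and cut-points is forced. That is more work than the paper's proof, but it would yield a finer description of the exterior (the actual sequence of split-classes along each boundary arc) that the paper does not make explicit.
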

\begin{proof} 
Bridges will clearly be part of the exterior subgraph, and present in any representing network of $s$. For non-bridge splits, consider the polygonal representation of $s$.
Non-trivial bridges of $s$ correspond to non-crossed diagonals of the polygonal representation, and cut-point nodes correspond to regions that are diagonal free.    
Each bridge-free (and cut-point-node-free) component of $s$ corresponds to a set of mutually crossing diagonals, and this collection of splits is the same in any equivalent polygonal diagram. Thus the same collections of non-bridge splits will be contained each in a single (bridge and cut-point-node free) component of any network equivalent to $s.$  Each of these non-bridge splits is displayed in $s$ with a parallel class of edges, two of which are adjacent to the exterior. The only variation of the order of leaves on the exterior will be due to twisting around bridges or cut-point nodes. Thus the leaves and exterior edges will always form the same subgraph, regardless of representing network.  
\end{proof}
 For examples see Figure~\ref{fig:mo_examplo_circ}. The exterior subgraph will be a series of cycles of even length, connected  by  cut-point nodes, nontrivial bridges, and trivial bridges to the leaves. In fact the exterior subgraph of $s$ is a circular split network itself, displaying the same system as $s$. (Typically, however, more interior edges are shown since parallelograms can help make the splits visually identifiable.) 

The following is immediate, since adding splits to a network only subtracts from the set of circular orders consistent with that network.
\begin{lemma}\label{refine}
 For split networks $s \subseteq s'$, if $c$ is a circular order consistent with $s'$ then $c$ is consistent with $s.$
\end{lemma}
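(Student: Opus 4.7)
The plan is to unpack the definition of a circular order being consistent with a split network and then observe that the conclusion is a direct monotonicity statement. Recall from the polygonal description of circular split systems: a circular order $c$ on $[n]$ is consistent with a split system $s$ precisely when labelling the sides of an $n$-gon in the order $c$ allows every split $A\mid B\in s$ to be drawn as a chord of that polygon, i.e.\ when the two blocks $A$ and $B$ each form a contiguous arc with respect to $c$. (Equivalently, in any planar embedding of the network with leaves on the exterior in cyclic order $c$, each split's parallel class of edges separates the polygon into two arcs matching $A$ and $B$.)

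With that reformulation the lemma becomes essentially a containment argument. First I would fix a circular order $c$ that is consistent with $s'$, so that every split in $s'$ can be realized as a chord in the $c$-labelled $n$-gon. Then, since $s\subseteq s'$, every split of $s$ is in particular a split of $s'$, and therefore can be drawn as a chord in the same $c$-labelled polygon. By the same definition, this is exactly what it means for $c$ to be consistent with $s$, completing the proof.

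There is no substantive obstacle here; the lemma is flagged as immediate in the paper, and the only care required is to confirm that ``$c$ consistent with $s$'' really is a property of the individual splits in $s$ (so that passing to a subset preserves it) rather than a more global property. Once one notes that the polygonal diagram criterion applies split-by-split, the monotonicity $s\subseteq s'\Rightarrow\{c:c\text{ consistent with }s'\}\subseteq\{c:c\text{ consistent with }s\}$ is automatic, which is precisely the content of the lemma.
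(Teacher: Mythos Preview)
Your proposal is correct and matches the paper's approach: the paper simply remarks that the lemma is immediate because adding splits to a system can only shrink the set of consistent circular orders, which is exactly the split-by-split monotonicity you spell out via the polygonal criterion. You give more detail than the paper, but the underlying idea is the same.
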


The 1-nested phylogenetic networks can also be drawn on the plane, with their leaves on the exterior. Just as for circular split networks, twisting the diagram around a bridge (reflecting one side through the line of the bridge), or around a cut-point node, does not change the list of splits.  Again any circular order of the leaves allowing the representation is called consistent with that network. 

\begin{lemma}\label{contig}
 A split $A|B$ is displayed by a 1-nested phylogenetic network $N$ (or a circular split network $s$) if and only if  every  circular order consistent with $N$ (consistent with $s$) has both parts $A$ and $B$ contiguous. 
\end{lemma}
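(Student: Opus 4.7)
The plan is to prove the two directions separately. For the forward direction, fix any consistent circular order $c$; by definition, this order arises from a planar embedding of $N$ (respectively $s$) with leaves arranged on the exterior in the order $c$. Assume $A|B$ is displayed. Then there is a set of edges $E$ whose removal separates the graph into two connected components with labeled nodes $A$ and $B$, respectively. For a 1-nested phylogenetic network the 1-nested hypothesis forces a minimal such $E$ to be either a single bridge or a pair of edges lying in a common cycle; for a circular split network, $E$ is a parallel class of edges. In every case, removing $E$ from the planar embedding slices the exterior cycle of leaves into exactly two arcs, one containing $A$ and the other $B$, so both parts are contiguous in $c$.

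For the converse I would prove the contrapositive: if $A|B$ is not displayed, then some consistent circular order places $A$ non-contiguously. By Theorem~\ref{pctree}, a 1-nested network $N$ corresponds to its PC-tree $T$, whose displayed splits are exactly the edge-removal splits together with the contiguous-arc splits at each $C$-node. Let $v$ be the node at which $A$ first branches: at least two subtrees hanging from $v$ meet $A$, but no proper subtree of any hanging subtree exhibits this same branching. If $v$ is a $P$-node, its children may be cyclically permuted freely, and since $A$ is not displayed by any edge at $v$ there exists either a $B$-only child or a partially mixed child which can be rotated to interleave $B$-leaves between the $A$-carrying children, breaking the contiguity of $A$. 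If $v$ is a $C$-node, the cyclic order of its children is forced, and non-display means the $A$-carrying children are already non-contiguous around the cycle, which then propagates to the leaf level. The remaining case, in which the non-display is entirely internal to a single child subtree, is handled by recursion into that subtree. For circular split networks, the polygonal representation from Figure~\ref{penta} shows that consistent cyclic orders differ only by twists around bridges and cut-point nodes, and a split is displayed iff it corresponds to a non-crossed diagonal in every such twist-equivalent polygon; an analogous case analysis on the exterior subgraph carries the proof through.

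The chief obstacle is the backward direction's case analysis at $v$: one must verify that the twist constructed to break contiguity of $A$ is genuinely consistent with $T$ throughout (and not accidentally forced by some $C$-node elsewhere in the tree), and that the recursion into partially mixed child subtrees terminates at a node where the $P$- or $C$-node argument directly applies. The trickiest subcases will be $P$-nodes whose every child is partially $A$-carrying, and small-degree $C$-nodes where the available rotations are limited.
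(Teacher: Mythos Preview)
Your forward direction matches the paper's. For the converse, you and the paper pursue the same structural dichotomy---free reordering at tree-like nodes versus fixed cyclic order at cycles---but the paper works directly on the network rather than through the PC-tree, and this lets it avoid recursion entirely. The paper's case split is: (1) some bridge has both $A$- and $B$-leaves on each side; (2) some cut-point node does; (3) neither. In (1) and (2) a single twist breaks contiguity. The key observation is that if (1) and (2) both fail, then at \emph{every} bridge and cut-point one side is monochromatic, which forces all of the $A$/$B$ mixing to live on a single cycle whose attached subgraphs are each pure $A$ or pure $B$; non-display then means the pure-$A$ pieces are already non-contiguous around that cycle, in \emph{every} consistent order. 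No induction is needed.

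Your PC-tree formulation is viable but the details you flag as obstacles are real and not yet resolved. Your definition of $v$ (``first branches'' with ``no proper subtree \ldots exhibits this same branching'') is ambiguous for an unrooted tree and does not obviously pick out a unique node. More importantly, your $C$-node case only treats the situation where the $A$-carrying children are non-contiguous around the cycle; you also need to handle a $C$-node whose $A$-carrying children are contiguous but at least one is mixed, which requires recursing into that child just as in the $P$-node case. The paper's non-recursive trichotomy sidesteps both issues by globalizing the ``no mixed side anywhere'' condition before localizing to a cycle; you could import that idea into the PC-tree language by first checking whether any edge of $T$ has both colors on both sides, and only then examining the unique internal node all of whose incident subtrees are monochromatic.
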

\begin{proof}
If a split is displayed, then the graph immediately exhibits a circular order with both parts contiguous. Furthermore, no twisting around a bridge or cut-point node can then separate the elements of $A$ in the resulting circular order. The converse follows since if $A|B$ not displayed, then elements of $A$ must either be found: (1) on both sides of a bridge, with elements of $B$ on both sides as well, or (2) on both sides of a cut-point node, with elements of $B$ on both sides as well, or (3) neither (1) nor (2), but as elements all attached to the same cycle in the graph---either directly via trivial bridges or as larger subgraphs entirely labeled by elements of $A.$ In the cases (1) or (2), a twist of the bridge or cut-point node results in  a consistent cyclic order with $A$ not contiguous. In case (3) the elements of $A$ cannot be contiguous in any circular order (else there would be a pair of edges of that cycle displaying the split $A|B$). 
\end{proof}

\section{Functions}\label{functions}

Although circular split networks and 1-nested phylogenetic networks are both planar with leaves on the exterior, they display splits in distinctly different ways.
As shown in \cite{gambette-huber}, any split system displayed by a 1-nested phylogenetic network can be displayed by a circular split network, but not the other way around. Instead, we consider maps between the two posets. First there is a map from phylogenetic networks to split networks:
\begin{definition}
For a 1-nested phylogenetic network $N$ we define $\Sigma(N)$ to be the circular split system made up of the splits displayed by $N.$\end{definition} 

In \cite{gambette-huber} it is shown that $\Sigma(N)$ can be displayed by a circular network, also referred to as $\Sigma(N)$.
Since $N \cong N'$ precisely when they display the same set of splits, $\Sigma$ is well defined. (Indeed $\Sigma$ is the same function as $\beta,$ defined in \cite{kleinman} as giving the set of splits displayed by a $PC$-tree.)  An algorithm for drawing a representing network of $\Sigma(N)$ is also presented in  \cite{gambette-huber}. First, cycles of length 4 are each replaced by a parallelogram. For $m \ge 5$, each $m$-cycle is replaced by an $m$-\emph{marguerite}: a collection of exactly $m^2-4m$ parallelograms arranged in a circle, each sharing sides with two neighbors, specifically organized as follows: each node of the original $m$-cycle is replaced by a rhombus, and then each edge of the cycle is replaced by $m-5$ parallelograms in a row. The  rows are attached to the  rhombi along adjacent edges of each rhombus, so that the whole arrangement has $m(m-5)$ sides on the interior of the original $m$-cycle, and $m(m-3)$ sides on the exterior. Bridges are attached to the $m$ remaining degree-2 vertices, one at each of the rhombi that replaced the original $m$ nodes of the cycle. Examples of representations of $\Sigma(N)$ are seen in Figure~\ref{fig:sigma_map} and later in Figure~\ref{fig:bigo_k1}.

The properties of $\Sigma$ are extensively discussed in \cite{gambette-huber}, including the fact that it takes 1-nested phylogenetic networks (displaying at least all the trivial splits) to circular split networks (also with all trivial splits included.) A bridge in $N$ is still present as a bridge in $\Sigma(N).$ Notice that while $\Sigma$ preserves bridges, it may also introduce new bridges: a cut-point node in $N$ can become a set of bridges in the image.   By its definition the function $\Sigma$ respects refinement; it is a monotone poset function: if $N \le N'$ then $\Sigma(N)\le \Sigma(N').$  

\begin{figure}
    \centering
    \includegraphics[width=5in]{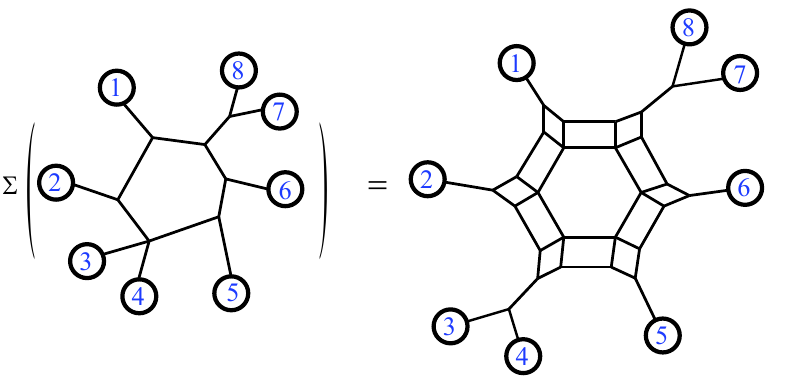}
    \caption{Example of the function $\Sigma.$}
    \label{fig:sigma_map}
\end{figure}

Next we define a function that takes a circular split network to a 1-nested phylogenetic network.  The function is shown to exist in \cite{gambette-huber}, and described on the split networks which are images of the function $\Sigma.$ In \cite{durell-forcey} we define the general function $L$ as follows: 
\begin{definition}
  Recall that the nodes and edges adjacent to the exterior of a circular split network are an invariant subgraph for the split system. Define $L(s)$ to be the \emph{smoothed exterior subgraph} of $s.$\end{definition}
  
  In other words, we construct the network $L(s)$ from a split system $s$ by beginning with a split network diagram  of $s$ and  considering the diagram as a planar drawing of its underlying planar graph, with  leaves on the exterior.  Then 1) delete all the edges that are not adjacent to the exterior of that graph, and 2) smooth away any resulting degree-2 nodes---delete the node but join the two adjacent edges to make one edge. 

\begin{figure}
    \centering
   \includegraphics[width=5in]{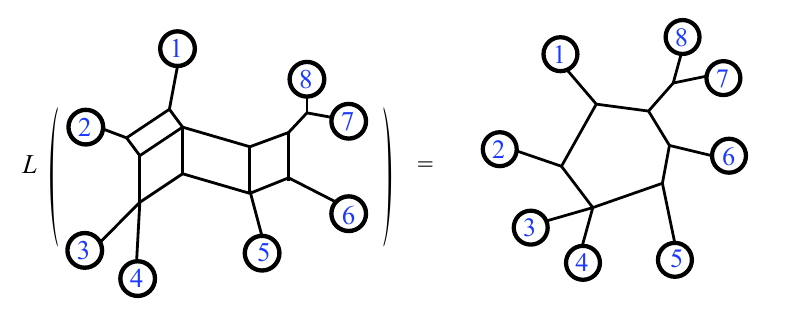}
    \caption{Example of the function $L.$}
    \label{fig:Lmap}
\end{figure}

Note that by its construction, $L$ preserves bridges and cut-point nodes. We also have the monotone property:
\begin{lemma}
For two circular split networks $s\le s'$, we have $\L(s) \le L(s').$
\end{lemma}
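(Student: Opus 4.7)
The plan is to reduce the monotonicity of $L$ to the monotonicity of the set of cyclic orders of the leaves consistent with a given network, exploiting Lemmas~\ref{refine} and~\ref{contig}. The key auxiliary claim I would first isolate is that for any circular split network $s$, the cyclic orders of $[n]$ consistent with $L(s)$ are precisely those consistent with $s$.

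To establish this auxiliary claim I would argue both inclusions directly from the planar-embedding description of consistency. For one direction, given any planar embedding of $s$ with leaves on the exterior in some cyclic order, deleting the interior edges and smoothing the resulting degree-2 nodes produces a planar embedding of $L(s)$ with the identical leaf order. For the reverse direction, I would \emph{inflate} a planar embedding of $L(s)$ back to one of $s$: each cycle of $L(s)$ corresponds to a bridge-free component of $s$, and by the earlier lemma on the invariance of the exterior subgraph, the cyclic order of leaves attached along that component is an intrinsic invariant of the component, hence matches the leaf order around the corresponding cycle in $L(s)$. Replacing each cycle by its matching component yields a planar embedding of $s$ with the same overall cyclic order of leaves.

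With the auxiliary claim in hand, the lemma follows in a few lines. By Lemma~\ref{refine}, $s\le s'$ gives that every cyclic order consistent with $s'$ is consistent with $s$, and the auxiliary claim transports this to the statement that every cyclic order consistent with $L(s')$ is consistent with $L(s)$. Now let $A|B$ be a split displayed by $L(s)$. By Lemma~\ref{contig}, the parts $A$ and $B$ are each contiguous in every cyclic order consistent with $L(s)$, and hence in every cyclic order consistent with $L(s')$. The converse direction of Lemma~\ref{contig}, applied to the 1-nested network $L(s')$, then forces $A|B$ to be displayed by $L(s')$, proving $L(s)\le L(s')$.

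The main obstacle I expect is the inflation step of the auxiliary claim: one must verify that the cyclic order of leaves along the boundary of each bridge-free component of $s$ genuinely matches the leaf order around the corresponding cycle of $L(s)$, and that this matching is robust to the various twists around bridges and cut-point nodes that change a planar embedding without changing the split system. The invariance lemma for the exterior subgraph does most of the work, but the identification of boundary orders across the $L$-construction warrants careful checking. Once that is settled, the rest of the proof is a clean chase through the two previously proven lemmas.
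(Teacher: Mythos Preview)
Your argument is correct, and it takes a genuinely different route from the paper's own proof. The paper argues geometrically: since $s\le s'$, one obtains a representative of $s$ from a representative of $s'$ by shrinking to zero length the parallel-edge classes of the extra splits; those classes always include edges on the exterior, so on passing to the smoothed exterior subgraph the operation can only delete or merge edges, forcing the displayed splits of $L(s)$ to sit inside those of $L(s')$. It is a two-sentence proof, but it leans on the intuitive (and not fully justified) claim that collapsing parallel classes always yields a valid planar split-network representative of the smaller system.

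Your approach instead factors through the set of consistent cyclic orders, using the auxiliary identity ``orders consistent with $s$'' $=$ ``orders consistent with $L(s)$'' together with Lemmas~\ref{refine} and~\ref{contig}. This is longer but arguably cleaner: it avoids the edge-collapsing heuristic entirely and makes the role of planarity explicit. It also foreshadows the exact mechanism the paper uses later in the proof of Theorem~\ref{unwgalois}, so your auxiliary claim is not wasted effort. The one place to be careful, as you already flag, is the inflation step: you need that each bridge-free, cut-point-free component of $s$ has its boundary cycle (with attachment points) determined up to reflection, so that it can be substituted for the corresponding cycle of $L(s)$ in either orientation. The invariance lemma for the exterior subgraph does give you this, since it pins down the exterior of each such component independently of the chosen representative; once that is noted the inflation is just gluing along bridges and cut-points, which manifestly preserves the leaf order on the outer face.
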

\begin{proof}
By construction, $L$ preserves the circular order and bridges of the split network diagram. If $s\le s'$ then there are some splits of $s'$, each drawn as a set of parallel edges, which can be shrunk to length 0 to see a representative split network of $s.$ These collapsed edges include some adjacent to the exterior of the diagram, and thus $L(s)$ will display a subset of the splits displayed by $L(s').$
\end{proof}

\subsection{Galois connections}

When restricted to phylogenetic trees, the functions $L$ and $\Sigma$ are both the identity. In general however, we have the following:

\begin{figure}
    \centering
    \includegraphics[width=\textwidth]{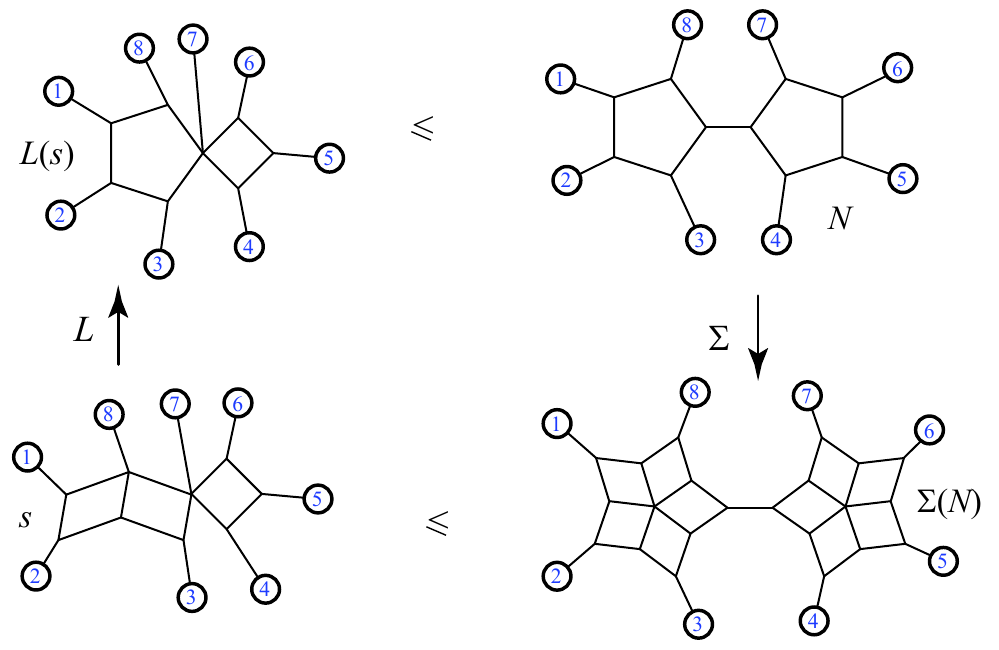}
    \caption{Illustration of the Galois connection between unweighted phylogenetic networks and split networks.}
    \label{fig:galois_d}
\end{figure}

\begin{thm}\label{unwgalois}
For any circular split system $s$ and 1-nested network $N,$ 

$$L(s) \le  N \text{ if and only if } s \le \Sigma(N) $$

That is, $L$ and $\Sigma$ form a Galois connection in which $L$ is the lower and $\Sigma$ the upper adjoint.  
\end{thm}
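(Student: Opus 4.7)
The plan is to prove the unit $s \le \Sigma(L(s))$ and counit $L(\Sigma(N)) \le N$ of the would-be adjunction; together with the monotonicity of $L$ and $\Sigma$ already established earlier in the paper, these imply the Galois equivalence formally. Indeed, assuming $L(s) \le N$, monotonicity of $\Sigma$ and the unit give $s \le \Sigma(L(s)) \le \Sigma(N)$; conversely, assuming $s \le \Sigma(N)$, monotonicity of $L$ and the counit give $L(s) \le L(\Sigma(N)) \le N$.

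For the unit, I would argue directly that every split displayed by the circular split network $s$ is also displayed by the $1$-nested network $L(s)$. A non-trivial split $A|B$ of $s$ is carried by a parallel class $C$ of edges whose band cuts across the planar embedding from one exterior arc to another; the two extreme edges of $C$ lie on the exterior of $s$ and therefore survive in $L(s)$, possibly as smoothed longer edges $e_1', e_2'$. Deleting $\{e_1', e_2'\}$ from $L(s)$ severs the exterior walk at precisely the two points where $C$ meets the boundary of the diagram, disconnecting $L(s)$ into two components with leaf labels $A$ and $B$. Trivial splits of $s$ are displayed by exterior bridges to leaves, which pass through $L$ unchanged. Hence every split of $s$ is displayed by $L(s)$, which is exactly the unit $s \le \Sigma(L(s))$.

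For the counit, I would invoke the contiguity characterization of Lemma~\ref{contig}. Any planar drawing of $\Sigma(N)$ with leaves in cyclic order $c$ restricts to a drawing of $L(\Sigma(N))$ with the same leaf order, so every circular order consistent with $\Sigma(N)$ is also consistent with $L(\Sigma(N))$. Moreover, since $\Sigma(N)$ and $N$ display the same splits by definition, Lemma~\ref{contig} tells us they share the same consistent orders. Hence for any split $A|B$ displayed by $L(\Sigma(N))$, Lemma~\ref{contig} forces $A$ and $B$ to be contiguous in every consistent order of $L(\Sigma(N))$, and therefore in every consistent order of $N$; another application of the lemma then shows that $A|B$ is displayed by $N$.

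The main obstacle will be the unit inequality: one must verify that the two extreme exterior edges of a parallel class $C$ really do form a disconnecting cut of $L(s)$, rather than being bypassed by some other route through the exterior subgraph. This reduces to the topological fact that the outer face of the planar embedding of $s$ is a closed walk interrupted by the band of $C$ in exactly two places, so that removing those two edges separates the leaves into $A$ and $B$. The appeal to the invariance of the exterior subgraph (the lemma preceding Lemma~\ref{refine}) guarantees that this geometric reasoning does not depend on the particular choice of representative split network for the equivalence class $s$.
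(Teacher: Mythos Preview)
Your proof is correct and uses essentially the same ingredients as the paper's: the exterior-edge argument showing that every split of $s$ survives as a cut in $L(s)$, and the consistent-order argument via Lemma~\ref{contig} for the other direction. The only organizational difference is that you package things as unit/counit plus monotonicity, whereas the paper proves each biconditional implication directly (your unit is precisely their forward implication, and their converse is your counit argument applied with general $s\subseteq\Sigma(N)$ rather than $s=\Sigma(N)$). One small over-claim: Lemma~\ref{contig} alone does not show that $N$ and $\Sigma(N)$ \emph{share} the same consistent orders; but you only need the inclusion (every order consistent with $N$ is consistent with $\Sigma(N)$), and that is immediate because $\Sigma$ preserves planar drawings---which is exactly what the paper uses.
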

\begin{proof}
For an arbitrary $s$, choose any $N$ such that $L(s) \le N$.  Consider a split $A|B \in s.$ We see that $A|B$ is displayed by $L(s)$ by construction: since $A|B$ is displayed by parallel edges of $s$ which include one or two edges on the exterior, which will be incorporated into one or two edges of $L(s)$, in turn comprising a minimal cut also displaying $A|B.$   The given inequality implies by definition that every split of $L(s)$ is displayed by $N.$ Thus $ A|B \in \Sigma(N),$ and we have $s \subseteq \Sigma(N).$


For the converse: for an arbitrary $N$, choose any $s$ such that $s \subseteq \Sigma(N).$  We show the contrapositive of the desired conclusion: we claim that if $A|B$ is not in the set of splits displayed by $N$ then $A|B$ is not displayed by $L(s)$ . Via Lemma~\ref{contig}, if the split $A|B$ is not displayed by  $N$ then there is  a circular order $c$ consistent with $N$ with  $A$ not contiguous in $c$. Then that same circular order is consistent with $\Sigma(N)$ since $\Sigma$ preserves any consistent $c.$ Then $c$ is consistent with $s$ by Lemma~\ref{refine}.  However then $c$ is also consistent with $L(s)$ and recall that $A$ is not contiguous in $c$. Then, the same lemma implies that $L(s)$ does not display $A|B$. Therefore $L(s) \le N.$ 



\end{proof}
Notice that the composition $\Sigma \circ L$  is increasing. 
When the upper adjoint is injective, we call the Galois connection a reflection, and the lower adjoint is implied to be surjective.  

\begin{thm}\label{reflection}
 The Galois connection via $L$ and $\Sigma$ is a reflection (but not a poset isomorphism).
\end{thm}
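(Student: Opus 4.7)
The plan is to verify injectivity of $\Sigma$, which by the convention stated just before the theorem is the defining property of a reflection, and then exhibit a circular split system outside the image of $\Sigma$ to block a full poset isomorphism.

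For injectivity, I would simply unwind the definitions. By construction $\Sigma(N)$ is the circular split system whose splits are exactly those displayed by $N$, and the poset elements on each side are equivalence classes under ``displays the same set of splits.'' So if $\Sigma(N) \cong \Sigma(N')$, then the two split systems consist of literally the same set of bipartitions, whence $N$ and $N'$ display the same splits, which is the defining condition for $N \cong N'$. The whole step is therefore essentially bookkeeping on equivalence relations.

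Once injectivity is in hand, the implied surjectivity of $L$ is a standard formal consequence of Theorem~\ref{unwgalois}: the counit inequality $L(\Sigma(N)) \le N$ (obtained by specializing $s = \Sigma(N)$ in the theorem), combined with the unit $\Sigma(N) \le \Sigma(L(\Sigma(N)))$ and the monotonicity of $\Sigma$ established in Section~\ref{functions}, yields $\Sigma(L(\Sigma(N))) \cong \Sigma(N)$; injectivity of $\Sigma$ then upgrades this to $L(\Sigma(N)) \cong N$, so every $N$ is realized as an image under $L$.

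For the ``not a poset isomorphism'' clause, I would invoke the observation already recalled at the opening of Section~\ref{functions} (from \cite{gambette-huber}): there exist circular split systems that are not displayed by any 1-nested phylogenetic network, the simplest examples arising from split networks whose polygonal diagrams contain crossings that cannot be packaged into a single cycle. For any such $s$, no $N$ satisfies $\Sigma(N) \cong s$, because by definition every split in $\Sigma(N)$ is displayed by the 1-nested network $N$; hence $\Sigma$ is not surjective. The only genuine content of the proof is the match-up of equivalence relations in the injectivity step, and the posets have been set up precisely so that this identification is automatic, so I do not anticipate any real obstacle.
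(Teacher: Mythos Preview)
Your injectivity argument for $\Sigma$ is exactly the paper's: both unwind the equivalence relation on 1-nested networks to see that $\Sigma(N)\cong\Sigma(N')$ forces $N\cong N'$.

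For the ``not a poset isomorphism'' clause you diverge slightly. The paper argues that $L$ is not injective, giving an explicit construction: take two circular split systems whose bridge-free portions carry five or more attached bridges, keep the attached pieces identical, but let the internal split collections differ; both collapse under $L$ to the same 1-nested cycle. You instead argue that $\Sigma$ is not surjective, invoking the statement from \cite{gambette-huber} (recalled at the start of Section~\ref{functions}) that some circular split systems are not displayed by any 1-nested network. In a Galois connection these two failures are equivalent, so your route is perfectly valid; it is shorter because it outsources the example to the literature, while the paper's route is more self-contained and produces a concrete witness pair. Your parenthetical description of the obstructing examples (``crossings that cannot be packaged into a single cycle'') is a bit loose---crossings as such are fine, since a single $m$-cycle in $N$ already yields all diagonals of the corresponding sub-polygon---so if you keep this approach you should either point to the citation without editorializing, or give an explicit $n=5$ system (e.g.\ a proper, non-compatible subset of the five-cycle splits) that lies outside the image of $\Sigma$.
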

\begin{proof}
The upper adjoint $\Sigma$ is injective simply because its output is defined to be the set of splits displayed by the input $N$, and two 1-nested networks are equivalent precisely when they display the same set of splits. Next we show that $L$ is not injective. This is clear when we look at two split networks which both have a bridge-free portion with 5 or more bridges attached (leading to leaves or other bridge-free portions.) Allow those leaves and other bridge-free portions  of the two respective split networks to be identical. Then we can have different sets of splits displayed by the portion we are focused on, and yet both are mapped to the same 1-nested network by $L$.   
\end{proof}
As corollaries from Galois theory \cite{primer} we see that $L$ is surjective, (but $\Sigma$ is not surjective) and that $L\circ\Sigma$ is the identity map. Figure~\ref{fig:galois_c} exhibits examples of these facts.
\begin{figure}
    \centering
    \includegraphics[width=\textwidth]{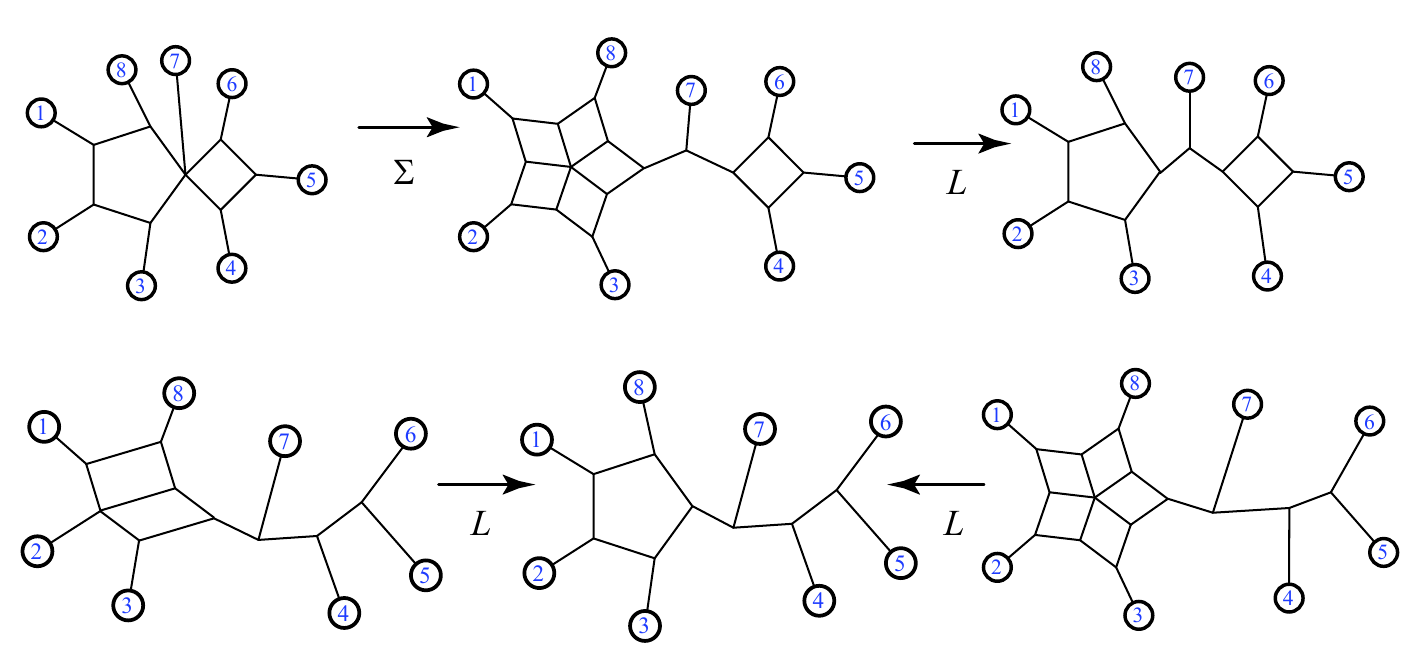}
    \caption{Above, demonstration of the fact that $L\circ\Sigma$ is the identity map; compare to Figure~\ref{fig:mo_examplo}. Below, example of the fact that $L$ is not an injection.}
    \label{fig:galois_c}
\end{figure}

\section{Weighting}\label{sec_weight}
Weighted phylogenetic trees have non-negative real number assigned to their branches, often representing the genetic distance between the two nodes. A weight of 0 can mean the edge is collapsed, and the resulting space of trees, called BHV${}_n$, is studied in \cite{bhv}. Now we may generalize weighted trees with weighted networks  in two distinct ways: by assigning non-negative real numbers to splits or to edges.  
\begin{definition}
A \textcolor{def}{\emph{weighted phylogenetic network}} $N$ has non-negative real numbers assigned to its edges, described  by a weight function $w_N.$ 
\end{definition}

\begin{definition}
A \textcolor{def}{\emph{weighted split network}} $s$ has non-negative weights assigned to each split, by a weight function $w_s$. Equivalently, every edge in a parallel class of $s$ has the same weight.
\end{definition}

\begin{definition}
For a weighted phylogenetic network $N$,  or a weighted split network $s$, we denote by $\overline{N}$, respectively $\overline{s}$, the unweighted networks found by forgetting the weights. 
\end{definition}
 A weight of zero often means that we can consider that edge (or split) as collapsed; in the topological picture  this results in two or more networks being identified. For split networks, the resulting quotient space CSN${}_n$ of all weighted circular split networks is studied in \cite{dev-petti}.   In \cite{kleinman} the authors consider weighted PC-trees, but there the weights are assigned to the splits---as opposed to the edges as for 1-nested phylogenetic networks here.
 
The contrast in weighting definitions---weighted edges in a phylogenetic network vs weighted splits in a split network---is explained from the perspective of phylogenomics.  The weight of an edge corresponds to the hypothetical difference in the DNA sampled at the beginning and end of a period of time. The weight of a split however, corresponds to a difference that is common to all pairs of taxa found on either side. In the application to a real set of taxa, the data collected is distilled into a pairwise difference function.  

A pairwise distance function assigns a non-negative real number to each pair of values from $[n]$. We call the lexicographically listed outputs for distinct pairs a \textcolor{def}{\emph{distance vector}}  $\mathbf{d}$, with entries denoted $d_{ij} = \mathbf{d}(i, j)= \mathbf{d}(j, i)$ for each pair of taxa $i \neq j \in [n]$ (also known as a dissimilarity matrix, or discrete metric when obeying the metric axioms.)

There are two special kinds of distance vector we consider. When the distance vector is \textcolor{def}{\emph{additive}} it means that for all $i,j,k,l\in [n],$ $\mathbf{d}$ obeys the  \textcolor{def}{\emph{four-point}} condition: 

$$d_{ij} + d_{kl} \le \max\{d_{ik}+d_{jl}, d_{il}+d_{jk}\}.$$

When the distance vector is \textcolor{def}{\emph{Kalmanson}}, or \textcolor{def}{\emph{circular decomposable}} it means there exists a cyclic order of $[n]$ such that for any subsequence $(i,j,k,l)$ of that order, $\mathbf{d}$ obeys this condition: $$\max\{d_{ij}+d_{kl}, d_{jk}+d_{il}\} \le d_{ik}+d_{jl}.$$

 \begin{definition}
Given  a weighted split system $s$ on $[n]$ we can derive a metric $\mathbf{d}_s$ on $[n],$ 
$$\mathbf{d}_s(i,j) = \sum_{i\in A, j\in B} w_s(A|B)$$ 
where the sum is over all splits of $s$ with $i$ in one part and $j$ in the other. The metric is often referred to as the distance vector $\mathbf{d}_s.$
 \end{definition}
 
It is well known that additive metrics are represented uniquely by weighted phylogenetic trees. That is, $\mathbf{d}$ is additive if and only if $\mathbf{d}$ = $\mathbf{d}_s$ for $s$ a unique weighted phylogenetic tree.
Furthermore, it is well known that Kalmanson metrics are represented uniquely by weighted circular split networks. Specifically, from \cite{steelphyl} we have the following: \begin{lemma}\label{kal}
A distance vector $\mathbf{d}$ is Kalmanson with respect to a circular order $c$ if and only if $\mathbf{d}$ = $\mathbf{d}_s$ for $s$ a unique weighted circular split system $s$, (not necessarily containing all trivial splits) with each split $A|B$ of $s$ having both parts contiguous in that circular order $c$.
\end{lemma}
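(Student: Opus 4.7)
The plan is to prove the two directions separately, with an explicit formula doing the heavy lifting in the ``if'' direction.

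For the forward direction, fix a cyclic order $c$, a weighted circular split system $s$ with all splits contiguous in $c$, and any four elements $i,j,k,l$ appearing in $c$ in that cyclic order. I would examine how each split $A|B$ of $s$ can restrict to the four-element set $\{i,j,k,l\}$. The contiguity hypothesis is decisive: the restriction $\{i,k\}|\{j,l\}$ is forbidden, since any set containing $i$ and $k$ but not $j$ cannot be an arc of $c$. Only five restriction types remain: the four singleton-vs-rest restrictions, plus $\{i,j\}|\{k,l\}$ and $\{j,k\}|\{i,l\}$. A direct tabulation shows that for each of these five types, $w_s(A|B)$ contributes the same amount to $d_{ik}+d_{jl}$ as it contributes to the larger of $d_{ij}+d_{kl}$ and $d_{jk}+d_{il}$, and strictly more in the two ``parallel'' cases. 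Summing over all splits of $s$ yields the Kalmanson inequality.

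For the converse, I would prove existence, uniqueness, and nonnegativity of weights simultaneously via an explicit formula. Relabel so $c=(1,2,\dots,n)$ and index each contiguous split $A|B$ by writing $A=\{a,a+1,\dots,b\}$ (indices mod $n$). Define
$$w_s(A|B) \;=\; \tfrac{1}{2}\bigl(d_{a-1,b}+d_{a,b+1}-d_{a-1,b+1}-d_{a,b}\bigr),$$
with the natural degenerate conventions when $A$ or $B$ is a singleton. Nonnegativity of $w_s(A|B)$ is exactly the Kalmanson inequality applied to the four-point subsequence $(a-1,a,b,b+1)$ of $c$. A telescoping computation then shows $\mathbf{d}_{s}=\mathbf{d}$: for any pair $p\ne q$, the contiguous splits that separate $p$ from $q$ are precisely those whose two ``cut positions'' lie one in the $c$-arc from $p$ to $q$ and one in the complementary arc, and the internal terms of $w_s$ cancel pairwise along each arc, leaving only $d_{pq}$. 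Uniqueness follows automatically, because the linear map sending a weighted contiguous split system to its distance vector is thereby seen to be a bijection between two $\binom{n}{2}$-dimensional spaces, with the boxed formula as its inverse.

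The main obstacle is the telescoping verification in the second paragraph. Identifying, for each fixed pair $(p,q)$, exactly which contiguous splits separate them and then collapsing the sum $\sum w_s(A|B)$ to $d_{pq}$ is notationally cumbersome, and one must separately check the trivial splits (where the formula partially collapses using $d_{xx}=0$) and the case of $c$-adjacent $p,q$. The underlying idea, however, is the standard split decomposition for Kalmanson metrics of Bandelt–Dress, and the remaining work is bookkeeping rather than new ingredient.
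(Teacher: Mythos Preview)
The paper does not supply its own proof of this lemma: it is simply quoted as a known fact, attributed to \cite{steelphyl} via the sentence ``Specifically, from \cite{steelphyl} we have the following,'' with no argument. So there is nothing in the paper's treatment to compare your approach against.

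Your sketch is the standard Chepoi--Fichet / Bandelt--Dress derivation of the circular split decomposition of a Kalmanson metric, and it is essentially correct. Two cosmetic slips are worth fixing. First, you write ``five restriction types'' but then list six (the four singleton-versus-rest types plus the two parallel pairs $\{i,j\}|\{k,l\}$ and $\{j,k\}|\{i,l\}$). Second, ``strictly more in the two parallel cases'' is not right: in each parallel case the contribution to $d_{ik}+d_{jl}$ \emph{equals} the larger of the other two sums and strictly exceeds only the smaller one; this is still exactly what the Kalmanson inequality needs.

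One substantive point deserves a sentence in a full write-up. When $A=\{a\}$ is a singleton, your weight formula degenerates to $\tfrac12(d_{a-1,a}+d_{a,a+1}-d_{a-1,a+1})$, and its nonnegativity is a three-term triangle inequality rather than a four-point Kalmanson condition. The paper's definition of ``distance vector'' does not assume the metric axioms, and Kalmanson alone does \emph{not} force these triangle inequalities: already for $n=4$ with $d_{12}=d_{23}=d_{34}=d_{14}=1$ and $d_{13}=d_{24}=3$ the Kalmanson condition holds but your formula assigns weight $-\tfrac12$ to each trivial split. So either an explicit metric hypothesis is needed, or you should note that the cited result is really stated for (pseudo)metrics; this is a looseness in the paper's statement as much as in your argument.
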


\begin{definition}
We also define a distance vector $\mathbf{d}_N$ for a weighted 1-nested phylogenetic network $N,$ where 
$$\mathbf{d}_N(i,j) = \min_p\{\sum_{e \in p} w_N(e)~|~p  \text{ is a path connecting } i,j\}$$ where the minimum is over paths $p$ from leaf $i$ to leaf $j,$ and each sum is over edges in one of those paths.
\end{definition}

\subsection{Ordering} For weighted networks of either variety, we restrict the partial ordering so that only networks with identical distance vectors are possibly comparable. 
\begin{definition}
For $N$ and $N'$  two 1-nested weighted phylogenetic networks we say $N \le N'$ when $d_N = d_{N'}$  and the splits displayed by $N$ are a subset of those displayed by $N'.$  
\end{definition} 
Note  that by definition we have the following:
\begin{lemma}\label{overl}
If for two weighted phylogenetic networks, we have $N\le N'$ then $\overline{N} \le \overline{N'},$ for the unweighted versions.
\end{lemma}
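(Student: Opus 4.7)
The plan is to observe that this lemma is essentially a tautology, obtained by unwinding the two definitions of $\le$. By the definition of the weighted order, the hypothesis $N \le N'$ has two conjuncts: first, $\mathbf{d}_N = \mathbf{d}_{N'}$, and second, the set of splits displayed by $N$ is contained in the set of splits displayed by $N'$. Only the second conjunct is relevant for the conclusion.

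The key observation is that the notion of a ``displayed split'' (via minimal cuts in the underlying graph) depends only on the combinatorial structure of the phylogenetic network, not on any weighting of its edges. Consequently, the set of splits displayed by $\overline{N}$ equals the set of splits displayed by $N$, and the set of splits displayed by $\overline{N'}$ equals the set of splits displayed by $N'$. The second conjunct of $N \le N'$ therefore transfers verbatim to $\overline{N}$ and $\overline{N'}$, which is precisely the definition of $\overline{N} \le \overline{N'}$ in the unweighted poset.

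There is no genuine obstacle here; the only thing to check is that the forgetful map $N \mapsto \overline{N}$ preserves displayed split systems, which is immediate from the definition of a displayed split as the partition of the labelled nodes induced by the connected components resulting from the deletion of a minimal cut. Hence the proof reduces to a two-line verification and no auxiliary lemmas are needed.
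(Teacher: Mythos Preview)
Your proposal is correct and matches the paper's approach: the paper simply states that the lemma holds ``by definition'' without giving a separate proof, and your unwinding of the two definitions of $\le$ is precisely the intended justification.
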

For weighted circular split networks, the analogous restriction of the poset makes it trivial. 
\begin{definition}\label{defw}
For $s$ and $s'$  two weighted circular split networks we say $s \le s'$ when $d_s = d_{s'}$  and the splits displayed by $s$ are a subset of those displayed by $s'.$ However, every relation in this case is an equality, since the Kalmanson metrics are uniquely displayed 
\end{definition}

\subsection{Functions} Now we define functions between the weighted split networks and the weighted phylogenetic networks. As previously explained in \cite{durell-forcey}, we begin by extending the function $L$ to a weighted version $L_w.$

\begin{definition}
For a weighted circular split network $s$ we define $L_w(s)$ to be the 1-nested phylogenetic network $L(\overline{s})$ (the smoothed exterior subgraph of the unweighted version of $s$), with weighted edges. The weight of an edge in the image is found by summing the  weights of splits which contribute to that edge. Let $p_s(e)$ be the set of splits $A|B$ of $s$, such that $A|B$ is represented by edges in $s$ one of which is used to form the edge $e$ in $L(s)$. If $w_s$ is the weight function on $s$ then the weight function on $L_w(s)$ is:

$$w_{L_w(s)}(e) = \sum_{A|B \in p_s(e)} w_s(A|B).$$  
\end{definition}
By this definition we have the following: \begin{lemma}\label{comm}
$\overline{L_w(s)} = L(\overline{s}).$
\end{lemma}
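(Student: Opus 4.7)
The plan is to unfold the definitions and observe that the asserted equality is forced by the very construction of $L_w$. By definition, $L_w(s)$ is built in two stages: first take the 1-nested phylogenetic network $L(\overline{s})$, then attach to each edge $e$ of that network the weight $w_{L_w(s)}(e) = \sum_{A|B \in p_s(e)} w_s(A|B)$. In particular, the underlying unweighted graph of $L_w(s)$ is, by fiat, $L(\overline{s})$. Since the overline operator is defined to discard the weight function and return the unweighted network, applying it to $L_w(s)$ recovers precisely $L(\overline{s})$.

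To make this rigorous I would just expand both sides. On the left, $\overline{L_w(s)}$ is the unweighted 1-nested network obtained by stripping $w_{L_w(s)}$ from $L_w(s)$; by the construction above, this underlying network is $L(\overline{s})$. On the right, one first strips the weight function $w_s$ from $s$ to obtain the unweighted split network $\overline{s}$, and then applies $L$, which deletes non-exterior edges and smooths degree-$2$ vertices. The key observation is that this smoothing-exterior-subgraph procedure depends only on the planar graph structure underlying a split network, not on any weights assigned to its splits. Hence running $L$ on $\overline{s}$ produces the same unweighted 1-nested network as the underlying graph used in the first stage of the construction of $L_w(s)$.

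I foresee no genuine obstacle: the lemma essentially records that the notations $L_w$ and $\,\overline{\phantom{s}}\,$ have been set up to commute. The only point worth flagging is that one must verify $L$ is well-defined on the unweighted input $\overline{s}$ (rather than requiring weight data), which is immediate from its definition as the smoothed exterior subgraph of the split-network diagram. The identity $\overline{L_w(s)} = L(\overline{s})$ then follows directly.
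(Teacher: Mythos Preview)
Your proposal is correct and matches the paper's treatment exactly: the paper simply states the lemma with the phrase ``By this definition we have the following,'' giving no further argument, because the underlying unweighted network of $L_w(s)$ is defined to be $L(\overline{s})$. Your unfolding of the definitions makes explicit precisely what the paper leaves implicit.
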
 
For examples see Figure~\ref{swlwuno}, as well as Figure~\ref{fig:galois_w}.
\begin{figure}
    \centering
    \includegraphics[width=\textwidth]{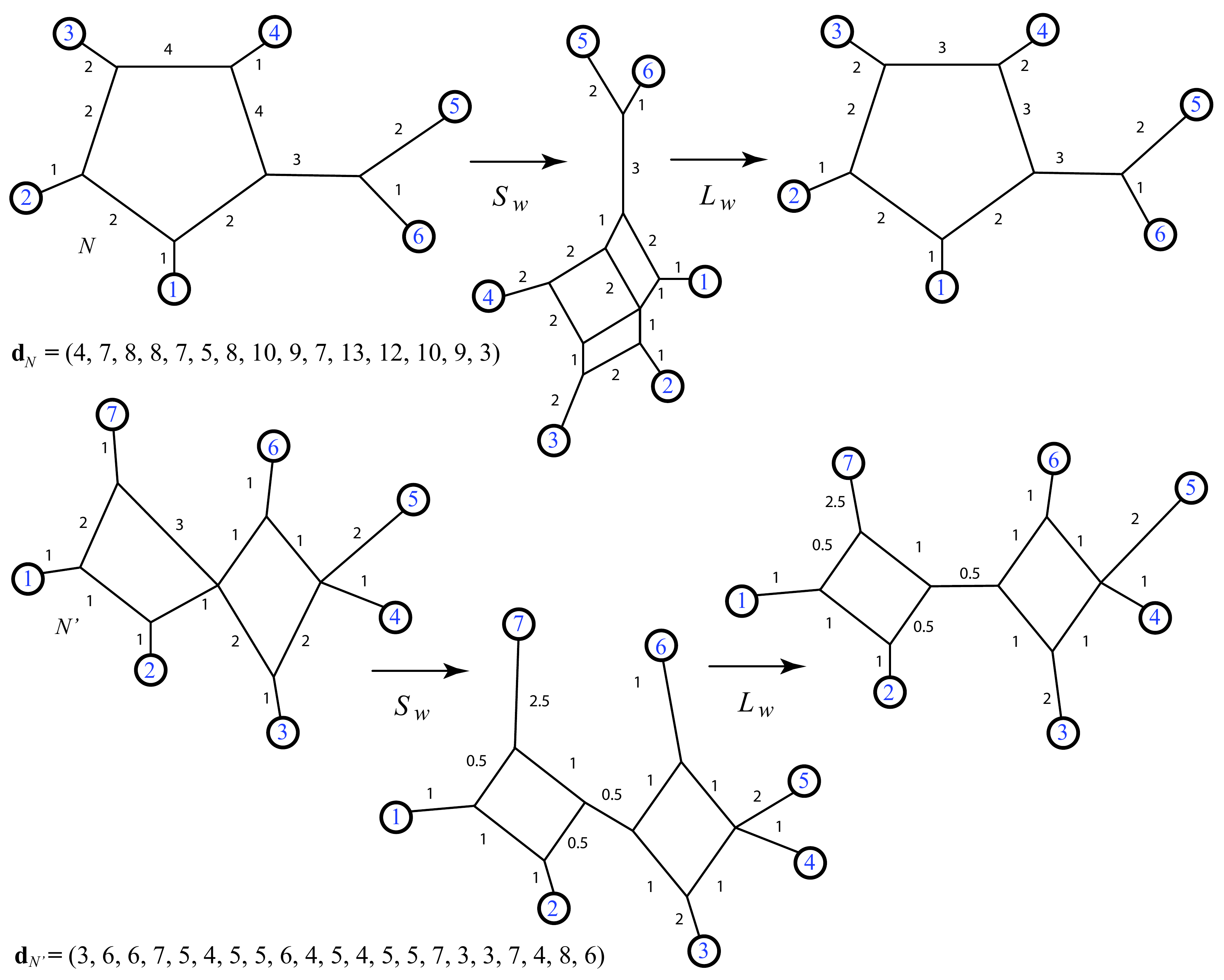}
    \caption{Examples of the action of both $L_w$ and $S_w$.}
    \label{swlwuno}
\end{figure}

Taking a weighted 1-nested phylogenetic network to a circular split network is also described in \cite{durell-forcey} Here we extend the definition to all weighted planar phylogenetic networks.
\begin{definition}
Given a weighted unrooted phylogenetic network $N$ that can be drawn on the plane with leaves on the exterior, we define $S_w(N)$ to be the circular split network $\mathcal{N}(\mathbf{d}_N).$ Here $\mathcal{N}$ is the neighbor-net algorithm defined by \cite{Bryant2007}.  
\end{definition}

For examples see Figure~\ref{swlwuno}, as well as Figure~\ref{fig:galois_w}.

We could also define $S_w(N)$ to be the unique weighted circular split network with the same distance vector as $N$. The following lemma is generalized slightly from \cite{durell-forcey} to cover all planar networks.

\begin{lemma}
Given a weighted planar phylogenetic network $N,$ there is a unique circular weighted split
system $s = S_w(N)$ which has the same associated distance vector as $N$. That is, $\mathbf{d}_N$ = $\mathbf{d}_s$.
\end{lemma}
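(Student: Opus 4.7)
The plan is to reduce the statement to Lemma~\ref{kal} by showing that the distance vector $\mathbf{d}_N$ of any weighted planar phylogenetic network $N$ is a Kalmanson metric with respect to the cyclic order $c$ induced on the leaves by the planar embedding. Once that is established, Lemma~\ref{kal} delivers both the existence of a weighted circular split system $s$ with $\mathbf{d}_s = \mathbf{d}_N$ whose splits all have contiguous parts in $c$, and its uniqueness. The neighbor-net algorithm $\mathcal{N}$ then recovers this same $s$ when given $\mathbf{d}_N$ as input, so the definition $S_w(N) = \mathcal{N}(\mathbf{d}_N)$ is well-posed.

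The core step is the Kalmanson inequality for shortest-path distances in $N$. Pick any four leaves $i,j,k,l$ appearing in this cyclic order along the exterior of $N$, and choose shortest paths $P_{ik}$ from $i$ to $k$ and $P_{jl}$ from $j$ to $l$, so that $|P_{ik}| = d_{ik}$ and $|P_{jl}| = d_{jl}$. Because $N$ is drawn in the plane with all four leaves on the outer face in cyclic order $(i,j,k,l)$, the Jordan curve formed by $P_{ik}$ together with an exterior arc from $i$ to $k$ separates $j$ from $l$. Hence $P_{jl}$ must meet $P_{ik}$ at some common point $x$ (a shared vertex, or a point we may take to be a common endpoint of a shared edge). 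Splitting each path at $x$ and recombining the pieces produces a walk from $i$ to $l$ and a walk from $j$ to $k$ whose total edge-weight equals $|P_{ik}| + |P_{jl}|$. Since $d_{il}$ and $d_{jk}$ are minima over all paths, this yields
\[
d_{il} + d_{jk} \le d_{ik} + d_{jl}.
\]
An identical crossing-and-swapping argument applied with a shortest path from $i$ to $j$ lying on one side of $P_{ik}\cup P_{jl}$ (or reusing the same crossing) gives $d_{ij} + d_{kl} \le d_{ik} + d_{jl}$, which together establish the Kalmanson condition at $(i,j,k,l)$.

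The main obstacle is handling the crossing rigorously: shortest paths need not be simple, shared portions can be edges rather than isolated vertices, and nontrivial bridges or cut-point nodes introduce ambiguity in how leaves are embedded cyclically. I would address this by working with edge-weights and formal edge-sequences rather than point-set intersections, invoking the Jordan curve theorem on the planar embedding to guarantee a shared vertex whenever the endpoints interleave around the exterior, and noting that twisting at bridges and cut-points does not change $\mathbf{d}_N$ so one may fix a single consistent cyclic order $c$ throughout. With the Kalmanson inequality in hand, Lemma~\ref{kal} finishes the proof: the unique weighted circular split system supplying $\mathbf{d}_N$ has every split contiguous in $c$ and so is a circular split network, which by construction coincides with $S_w(N) = \mathcal{N}(\mathbf{d}_N)$.
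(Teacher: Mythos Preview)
Your proposal is correct and follows essentially the same route as the paper: verify that $\mathbf{d}_N$ is Kalmanson with respect to a circular order consistent with the planar embedding by exploiting the forced crossing of shortest paths $P_{ik}$ and $P_{jl}$, swap at the crossing to bound the two left-hand sums, and then invoke the standard uniqueness result for Kalmanson metrics (your Lemma~\ref{kal}, the paper's citation of \cite{steelphyl}) together with neighbor-net. Your parenthetical ``reusing the same crossing'' is exactly how the paper handles the second inequality $d_{ij}+d_{kl}\le d_{ik}+d_{jl}$, so the arguments coincide.
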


\begin{proof}
 First we show that $\mathbf{d}_N$ obeys the Kalmanson condition: there exists a
circular ordering of $[n]$ such that for all $1 \le i < j < k < l \le n$ in that ordering,

$$\max\{\mathbf{d}_N(i,j)+\mathbf{d}_N(k,l),\mathbf{d}_N(j,k)+\mathbf{d}_N(i,l)\} \le \mathbf{d}_N(i,k)+\mathbf{d}_N(j,l).$$

The circular ordering that meets our specifications is just any choice of one of the circular
orderings consistent with $N$. Our network $N$ is planar, so the edges are drawn with no crossings.
The two paths involved on the right hand side of the condition intersect each other. Then since
the leaves are on the exterior, the four paths involved on the left hand side of the condition are
each bounded above in length by a path made by following first one intersecting path and then
the other, (switching at the crossroads, after their shared portion.) Two paths in a sum on the left
hand side of the condition can at most use exactly all of both the intersecting paths, so that the
inequality is guaranteed.
It is well known that for any Kalmanson metric $\mathbf{d}_N$ there exists a unique weighted split system $s$
whose weighting gives that metric: $\mathbf{d}_N$ = $\mathbf{d}_s$. To actually calculate this split system, the algorithm
neighbor-net can be used; since it is guaranteed to return the unique answer for any Kalmanson
metric \cite{steelphyl}. \\\\
\end{proof}

In order to see that $S_w$ has the correct range, we must check that the map $S_w$ takes 1-nested phylogenetic networks with labeled leaves to circular split networks that contain all the trivial splits. In fact, we show the following:
\begin{lemma}\label{bridge}
For any bridge of a 1-nested $N,$ the split represented by that bridge is also represented by a bridge in $S_w(N).$
\end{lemma}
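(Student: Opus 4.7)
The plan is to fix a cyclic order $c$ consistent with $N$, show that $A|B$ appears in the Kalmanson decomposition of $\mathbf{d}_N$ as an uncrossed contiguous split in $c$, and then invoke the fact that uncrossed contiguous splits are realized by single edges---bridges---in the circular split network $S_w(N)$.

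Let $w = w_N(e) > 0$ be the weight of the bridge (a weight-zero bridge can be collapsed without affecting $\mathbf{d}_N$), and let $N_A, N_B$ be the two subnetworks obtained by deleting $e$, with $v_A, v_B$ its endpoints. The preceding lemma shows that $\mathbf{d}_N$ is Kalmanson with respect to $c$, so by Lemma~\ref{kal} every split of $S_w(N)$ has both parts contiguous in $c$; and by Lemma~\ref{contig}, $A$ and $B$ are each contiguous in $c$. From the bridge decomposition
\[
\mathbf{d}_N(i, j) = \mathbf{d}_{N_A}(i, v_A) + w + \mathbf{d}_{N_B}(v_B, j) \qquad (i \in A,\; j \in B),
\]
and the fact that $\mathbf{d}_N$ restricts to $\mathbf{d}_{N_A}$ on $A \times A$ and to $\mathbf{d}_{N_B}$ on $B \times B$, substitution yields the four-point identity
\[
\mathbf{d}_N(i_1, j_1) + \mathbf{d}_N(i_2, j_2) = \mathbf{d}_N(i_1, j_2) + \mathbf{d}_N(i_2, j_1) \qquad (i_1, i_2 \in A,\; j_1, j_2 \in B),
\]
together with $\mathbf{d}_N(i_1, i_2) + \mathbf{d}_N(j_1, j_2) + 2w \le \mathbf{d}_N(i_1, j_1) + \mathbf{d}_N(i_2, j_2)$ from the triangle inequality inside $N_A$ and $N_B$.

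Now suppose, for contradiction, that some split $C|D$ of $S_w(N)$ crosses $A|B$ in $c$. Contiguity of $C$ and $D$ in $c$ forces witnesses $i_1 \in A \cap C,\; i_2 \in A \cap D,\; j_1 \in B \cap D,\; j_2 \in B \cap C$ to appear in the cyclic order $(i_2, i_1, j_1, j_2)$. Applying the Bandelt--Dress isolation-index formula to $C|D$ at this quartet, the four-point identity above forces the isolation index to vanish, so $w_{S_w(N)}(C|D) = 0$---a contradiction. The analogous calculation at a quartet chosen at the two boundaries of the $A$-arc, using the companion inequality above, yields isolation index at least $w$ for $A|B$, so $A|B$ is itself a split of $S_w(N)$ with weight at least $w$. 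Being contiguous and uncrossed in $c$, its split-class in $S_w(N)$ consists of a single edge, which is a bridge. The main obstacle is the quartet calculation ruling out the crossing split: one must verify that, with the witnesses forced into the cyclic order $(i_2, i_1, j_1, j_2)$, the bridge identity annihilates the Bandelt--Dress minimum-of-maxima defining the weight of $C|D$ in the Kalmanson decomposition.
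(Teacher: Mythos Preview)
Your approach is sound and genuinely different from the paper's. The paper argues structurally: every circular order consistent with $N$ is also consistent with $S_w(N)$ (via the Kalmanson property), and since twisting $c$ about the bridge $e$ gives another consistent order, no split of $S_w(N)$ can cross $A|B$; the paper then does a direct four-leaf distance computation to show the resulting bridge in $S_w(N)$ has weight at least $w_N(e)$. You instead work entirely through the Bandelt--Dress split decomposition, using the additive identity across the bridge to kill the isolation index of any hypothetical crossing split and to bound the index of $A|B$ from below. Your route is more computational but has the advantage of yielding the weight bound and the bridge structure in a single framework, while the paper's twist argument is cleaner for the structural ``no crossing'' step but needs a separate calculation for the weight.

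Two small points deserve tightening. First, when you compute the isolation index of $A|B$ ``at a quartet chosen at the two boundaries of the $A$-arc'', recall that the Bandelt--Dress index is a \emph{minimum} over quartets, so a single quartet only gives an upper bound; you need either to note that your companion inequality $\mathbf{d}_N(i_1,i_2)+\mathbf{d}_N(j_1,j_2)+2w\le \mathbf{d}_N(i_1,j_1)+\mathbf{d}_N(i_2,j_2)$ holds for \emph{every} quartet with $i_1,i_2\in A$, $j_1,j_2\in B$ (it does, by the same triangle-inequality reasoning), or to invoke the Kalmanson weight formula that depends only on the boundary quartet. Second, the ``main obstacle'' you flag does resolve cleanly: with $c_1=i_1,\,c_2=j_2\in C$ and $d_1=i_2,\,d_2=j_1\in D$, the bridge identity gives $d(c_1,d_2)+d(c_2,d_1)=d(c_1,c_2)+d(d_1,d_2)$, and the companion inequality gives $d(c_1,d_1)+d(c_2,d_2)<d(c_1,c_2)+d(d_1,d_2)$, so the Bandelt--Dress maximum equals $d(c_1,c_2)+d(d_1,d_2)$ and the quartet contribution is zero. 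Since the index is the minimum, $\alpha_{C|D}=0$ as you need.
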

\begin{proof}
First we note that $S_w(N)$ does not subtract from the collection of bridges and cut-point nodes of $N.$ To see why: if $c$ is a circular order consistent with $N$, then $d_N$ is Kalmanson with respect to that circular order $c.$ Thus $c$ is also consistent with  $S_w(N),$ by Lemma~\ref{kal}.  Therefore, since the set of circular orders consistent with $N$ is determined by twisting around the splits associated to bridges or cut-point nodes of $N$, every bridge or cut-point node of $N$ must correspond to a bridge or cut-point node of $S_w(N),$ else some circular order would no longer be consistent. 

Our claim that the collection of bridges is not decreased by $S_w$ follows: if $e$ is a bridge in $N$ separating leaves $a$ and $b$ from leaves $c$ and $d$, then every path from $a$ to $c$ or $d$, and every path from $b$ to $c$ or $d$, must use $e.$ Let $x$ be the length of $e$, $l$ be the minimum distance from $a$ to the nearest endpoint of $e$, and $r$, $p$ and $q$ the minimum distances in $N$ from $b,c,d$ respectively to the nearest endpoint of $e.$ We know that $e$ corresponds to either a bridge or a cut-point node in $S_w(N),$ and the latter is equivalent to a bridge of zero length. We show that there is a bridge, of weight greater than or equal to $x.$ We assume a bridge of weight $x+\epsilon$ and show that $\epsilon \ge 0.$

In $s= S_w(N)$, with the bridge $e'$ of weight $x +\epsilon,$ the leaves $a,b,c,d$ have distances $l',r',p',q'$ to the ends of $e'.$ However, since $\mathbf{d}_s(a,b) = l' + r'$ and $\mathbf{d}_s(c,d) = p' + q',$ we have $l+r \ge l'+r'$ and $p+q \ge p'+q'.$ Also, $$\mathbf{d}_s(a,c) = l' + x +\epsilon +p' = l+x+p$$ and $$\mathbf{d}_s(b,d) = r' + x+ \epsilon +q' = r+x+q.$$  Adding and simplifying: $$l'+r'+p'+q' + 2\epsilon = l+r+p+q.$$ Using the inequalities, and subtracting, $$2\epsilon \ge 0.$$
Thus $\epsilon\ge 0$, and so the bridge cannot shrink, only possibly grow. A similar argument is constructed easily for the case where the bridge $e$ separates leaf $a$ from all other leaves, that is when $e$ is trivial.
\end{proof}

When we restrict to weighted circular split networks arising from weighted 1-nested networks, the codomain of $S_w$ is the outer-path circular split networks, and the distance vector is preserved by the map $L_w.$ Specifically we have: 
\begin{lemma}\label{lemw}
For any weighted 1-nested phylogenetic network $N$, if $s=S_w(N)$ then $s$ is outer-path and thus $\mathbf{d}_{L_w(s)} = \mathbf{d}_s.$
\end{lemma}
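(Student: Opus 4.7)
The plan is to prove the two assertions of the lemma in sequence. First I would verify that $s = S_w(N)$ is outer-path: for every pair of leaves $i,j$, some exterior path in $s$ realizes the weight $\mathbf{d}_s(i,j)$. Once this is established, the equality $\mathbf{d}_{L_w(s)} = \mathbf{d}_s$ follows quickly from the construction of $L_w(s)$ as the smoothed exterior subgraph of $s$ with weights obtained by summing the split-weights of the exterior $s$-edges fused during smoothing.

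For the outer-path claim I would fix a shortest $(i,j)$-path $p$ in $N$ of weight $\mathbf{d}_N(i,j)$. Because $N$ is 1-nested and planar with leaves on the exterior, every bridge-free biconnected piece of $N$ is a single cycle of length at least $4$; the path $p$ enters each such cycle at one cut-point or trivial-bridge attachment and exits at another, traversing a contiguous arc on one of the two sides. Lemma~\ref{bridge} (and its proof) shows that every bridge of $N$ persists as a bridge of $s$ of equal weight, and analogously every cycle of $N$ is blown up in $s$ into a marguerite-type biconnected component, glued along the same cut-point attachments. Inside such a marguerite, the exterior arc corresponding to the side of the cycle picked by $p$ crosses exactly the splits $A|B$ of $s$ with $A$ contained in that cycle-arc, each once. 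Using $\mathbf{d}_s = \mathbf{d}_N$ on pairs of leaves attached to consecutive cycle vertices, a short telescoping computation (combined with Lemma~\ref{kal}) shows that the total weight of this exterior marguerite-arc equals the one-sided cycle-arc weight in $N$. Concatenating the unchanged bridge segments with these marguerite-exterior arcs produces an exterior $(i,j)$-path in $s$ of total weight equal to the length of $p$, which is $\mathbf{d}_N(i,j) = \mathbf{d}_s(i,j)$.

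For the second assertion, the construction of $L_w(s)$ yields a weight-preserving bijection between paths in $L_w(s)$ and simple exterior $(i,j)$-walks in $s$, so $\mathbf{d}_{L_w(s)}(i,j)$ equals the minimum weight of an exterior $(i,j)$-path in $s$; by the outer-path property just shown, this minimum equals $\mathbf{d}_s(i,j)$. The main obstacle I expect is the marguerite matching step: explicitly identifying which splits a given exterior arc crosses and checking that their weights, pinned down by $\mathbf{d}_s = \mathbf{d}_N$ via the uniqueness in Lemma~\ref{kal}, reproduce the one-sided cycle sums of $N$. I would defuse this by induction on the number of cycles of $N$, peeling cycles off one at a time along their attaching bridges or cut-point nodes and reducing to the case of a single $m$-cycle with bridges attached to each cycle vertex, where the associated Kalmanson linear system is small and explicit enough to verify the required identity directly.
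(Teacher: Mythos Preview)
Your direct-construction strategy is a different route from the paper's contrapositive argument, but as written it rests on a structural claim about $S_w(N)$ that is not justified and is in fact not true in the form you state. You assert, ``analogously'' to Lemma~\ref{bridge}, that every cycle of $N$ is blown up in $s=S_w(N)$ into a marguerite-type biconnected component. But the marguerite construction is the recipe for $\Sigma(\overline{N})$, not for $S_w(N)$: the map $S_w$ is defined purely via the distance vector $\mathbf{d}_N$ and neighbor-net, and the resulting split system can be a strict sub-system of $\Sigma(\overline{N})$ (indeed the paper later shows only $\overline{S_w(N)}\le\Sigma(\overline{N})$, and that argument uses the present lemma). So a cycle of $N$ need not produce a full marguerite in $s$; some of the expected splits may be absent. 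Your telescoping computation and the ``exterior arc crosses exactly the splits $A|B$ with $A$ contained in that cycle-arc'' description both presuppose this marguerite picture. You also claim Lemma~\ref{bridge} gives bridges of \emph{equal} weight; its proof only gives $\epsilon\ge 0$.

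The paper avoids all of this by arguing the contrapositive: it never tries to describe $s$. It assumes $s$ has a genuine interior shortcut between two leaves, extracts from that a six-leaf configuration with a ``chord'' of positive length, and then shows that no 1-nested $N$ can realize those pairwise distances, because erasing the chord forces the four-point (additive) condition on four of the leaves, which in turn forces one of the chord-adjacent lengths to vanish. If you want to keep your direct approach, you would first need an independent proof (not using the present lemma) that the exterior subgraph of $S_w(N)$, with its induced split-weights, has arc-lengths matching the cycle-arc sums in $N$; the induction you sketch does not supply this, since already the base case of a single $m$-cycle requires you to identify $S_w(N)$ explicitly before you can compare exterior arcs.
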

\begin{proof}
The distance $\mathbf{d}_s(i,j)$ is the sum of the weights of splits separating $i,j.$ It can also be seen as the sum of the weights of the edges in the split network on any shortest path from $i$ to $j,$ since every shortest path must use one edge representing each of the separating splits. Since each split is represented by a minimal cut (made up of parallel edges all the same weight) any shortest path from $i$ to $j$ has total weight no greater  than any path using edges on the exterior of the split network. If there is such a path on the exterior that is also minimal then the minimal weight of a path in $L_w(s)$ equals  $\mathbf{d}_s(i,j).$ Therefore we demonstrate the Lemma via the contrapositive. We show that if there is a shortest path partly through the interior of $s=S_w(N)$ that is strictly less than any on the exterior, then  $N$ cannot be 1-nested (nor 0-nested.) Note that for $n\le 5$ we can  inspect all the shapes of the circular split networks and see that any path between leaves which uses each split at most once is already visibly equal to a path on the exterior. (For a picture see \cite{dev-petti}.) For $n\ge 6$, if there is a path through the interior shorter than either two on the exterior, then there are 6 leaves that have the relationship in a (sub)-network  which we show in Figure~\ref{shortcut}, with variables representing positive lengths. The chord has the length of the interior short path in $s$, and the other edges the lengths of the exterior portions of $s.$ Thus we have that $g<f+x+h$, $f<g+h+x,$ $h<g+f+x$ and $x< f+g+h.$ Now for $s$ to equal $S_w(N)$ for $N$ a 1-nested network, the chord must disappear---at that point the remaining edges would have new lengths which preserve the existing distances between pairs of leaves. However, if this is possible then upon removing the chord the four leaves 1,...,4 will make a tree, and their distances will obey the additive conditions. Imposing additive conditions on the pairwise distances between the four nodes, regardless of whether the distance from 1 to 3 uses $x+h$ or $f+g$, and whether the distance from 2 to 4 uses $x+f$ or $g+h$, always forces one of the lengths $x,f,g,h$ to be zero, a contradiction. For instance, if $d_{1,3}+d_{2,4} \le d_{1,2}+d_{3,4}$ (picking two of the possible lengths and one of the conditions) then $a+x+h+c+b+g+h+d \le a+x+d+b+g+c$ which implies $2h\le 0.$ \begin{figure}
    \centering
    \includegraphics[width=4.5in]{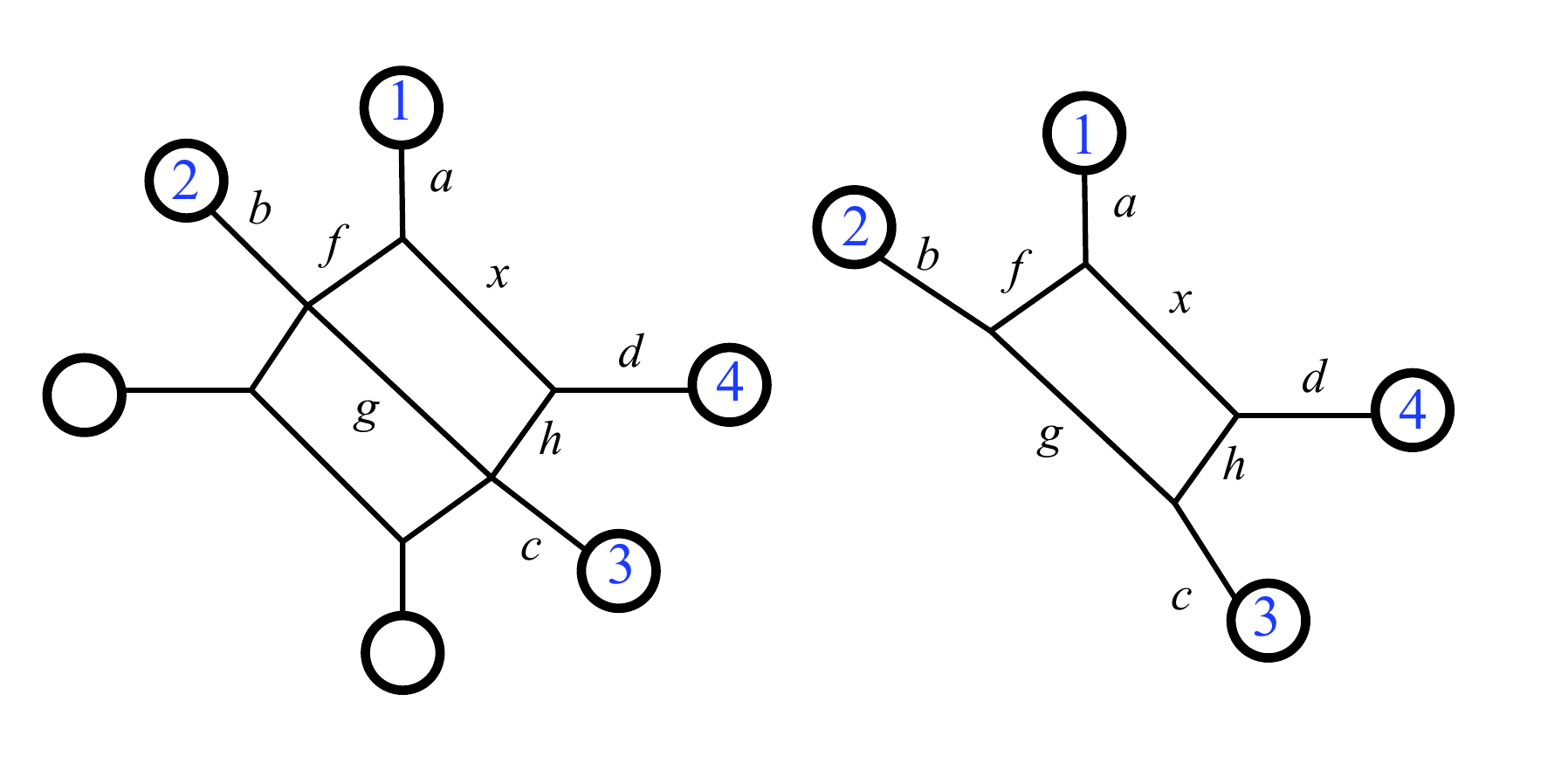}
    \caption{No shortcuts allowed, as described in proof of Lemma~\ref{lemw}.}    \label{shortcut}
\end{figure}\end{proof}

\subsection{Galois connection: weighted networks}
~

First we note that $L_w$ and $S_w$ are both monotone.   Since the only relations in our poset of weighted circular split networks are equalities, then $L_w$ is trivially monotone. Since comparable weighted phylogenetic networks have the same associated distance vector, then if $N\le N'$ we have $S_w(N) = S_w(N').$ When restricted to the weighted trees, $L_w$ and $S_w$ are both the identity function.

\begin{thm}\label{wgalois}
For any weighted outer-path circular split system $s$ and weighted 1-nested network $N,$ 

$$L_w(s) \le  N \text{ if and only if } s \le S_w(N) $$

That is, $L_w$ and $S_w$ form a Galois connection between weighted outer-path circular split systems and weighted 1-nested phylogenetic networks in which $L_w$ is the lower and $S_w$ the upper adjoint.  
\end{thm}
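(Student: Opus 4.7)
The first step is to unwind what the conclusion really asks. By Definition~\ref{defw}, the poset of weighted circular split networks contains only equalities, so $s\le S_w(N)$ is the same as $s=S_w(N)$. Thus the biconditional collapses to: $L_w(s)\le N$ if and only if $s=S_w(N)$. On both sides I will need to extract a distance-vector equality and an unweighted split-refinement, and then handle each with Lemma~\ref{lemw}, Lemma~\ref{comm}, and the unweighted Galois connection Theorem~\ref{unwgalois}. Throughout, the outer-path hypothesis on $s$ is precisely what is needed to relate $\mathbf{d}_s$ to $\mathbf{d}_{L_w(s)}$ (and on the reverse side it is automatic, since $S_w(N)$ is outer-path by Lemma~\ref{lemw}).

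For the forward direction, assume $L_w(s)\le N$. By the definition of the weighted phylogenetic poset, $\mathbf{d}_{L_w(s)}=\mathbf{d}_N$. Since $s$ is outer-path, the exterior-path argument used in Lemma~\ref{lemw} gives $\mathbf{d}_s=\mathbf{d}_{L_w(s)}$, so $\mathbf{d}_s=\mathbf{d}_N=\mathbf{d}_{S_w(N)}$. Uniqueness of the Kalmanson split decomposition (Lemma~\ref{kal}, as used in Lemma~\ref{bridge}) then forces $s=S_w(N)$. For the reverse direction, assume $s=S_w(N)$. I need (i) $\mathbf{d}_{L_w(s)}=\mathbf{d}_N$ and (ii) $\overline{L_w(s)}\le\overline{N}$. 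Part (i) is Lemma~\ref{lemw} applied to $s=S_w(N)$. For (ii), Lemma~\ref{comm} rewrites $\overline{L_w(s)}=L(\overline{S_w(N)})$, and by the unweighted Galois connection (Theorem~\ref{unwgalois}) this is $\le\overline{N}$ if and only if $\overline{S_w(N)}\le\Sigma(\overline{N})$. So the whole proof reduces to establishing this last inclusion: every split appearing in the Kalmanson decomposition of $\mathbf{d}_N$ is displayed by $N$.

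The hard part is exactly this inclusion $\overline{S_w(N)}\le\Sigma(\overline{N})$. I plan to prove it by showing that the set of circular orders consistent with $N$ coincides with the set consistent with $S_w(N)$, and then invoking Lemma~\ref{contig} on both sides. One containment is already in the proof of Lemma~\ref{bridge}: if $c$ is consistent with $N$ then $\mathbf{d}_N$ is Kalmanson with respect to $c$, and Lemma~\ref{kal} makes $c$ consistent with $S_w(N)$. For the reverse containment, Lemma~\ref{bridge} (together with the analogous statement for cut-point nodes in its proof) shows that $S_w(N)$ retains every bridge and cut-point node of $N$; since consistent circular orders of a planar split/phylogenetic network differ only by twists around bridges and cut-point nodes, any consistent order of $S_w(N)$ is also consistent with $N$. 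With equality of the two sets of consistent orders in hand, given any split $A\mid B$ in $S_w(N)$, Lemma~\ref{contig} says $A$ is contiguous in every order consistent with $S_w(N)$, hence in every order consistent with $N$, and Lemma~\ref{contig} applied in the other direction concludes that $N$ displays $A\mid B$. This gives $\overline{S_w(N)}\subseteq\Sigma(\overline{N})$ and closes the argument.
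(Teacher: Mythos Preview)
Your proof is correct and follows essentially the same route as the paper's. Both directions match: the forward direction uses the outer-path hypothesis to get $\mathbf{d}_s=\mathbf{d}_{L_w(s)}$ and then uniqueness of the Kalmanson decomposition; the reverse direction reduces via Lemma~\ref{comm} and Theorem~\ref{unwgalois} to showing $\overline{S_w(N)}\le\Sigma(\overline{N})$, which is then proved by the ``contiguity in all consistent orders'' characterization from Lemma~\ref{contig}.

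One small point: in your third paragraph you set out to prove that the sets of consistent circular orders for $N$ and for $S_w(N)$ \emph{coincide}, but you only use one inclusion, namely that every order consistent with $N$ is consistent with $S_w(N)$ (the one already extracted from the proof of Lemma~\ref{bridge} via Lemma~\ref{kal}). The paper uses exactly this inclusion and no more: from $A|B\in S_w(N)$ one gets $A$ contiguous in every Kalmanson order of $\mathbf{d}_N$, hence in every order consistent with $N$, hence $A|B$ is displayed by $N$ by Lemma~\ref{contig}. Your reverse inclusion argument (that bridges/cut-points are preserved, so consistent orders of $S_w(N)$ are consistent with $N$) is not needed, and its justification as written is a bit loose---preservation of $N$'s bridges and cut-points does not by itself bound the consistent orders of $S_w(N)$ from above by those of $N$. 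Dropping that paragraph-half would tighten the write-up without losing anything.
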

\begin{proof}
If for some outer-path network $s$ we are given $N$ with $L_w(s) \le N$ then the distance vectors are equal: $\mathbf{d}_{L_w(s)} = \mathbf{d}_N.$ Thus by the construction of $L_w$ on an outer-path network, we see $\mathbf{d}_s = \mathbf{d}_N.$ Therefore $S_w(N) = s,$ by unique representation of Kalmanson vectors.

Conversely, consider that for some $N$, we are given $s \le S_w(N).$  Then $s = S_w(N)$ as mentioned in Definition~\ref{defw}, and therefore $\mathbf{d}_{L_w(s)} = \mathbf{d}_N$ by Lemma~\ref{lemw}. Now we only need to show that the set of splits of $L_w(s)$ is contained in the set of splits of $N.$ We claim that any split $A|B$ of $S_w(N)$ is also a split of $N$. For the trivial splits we have Lemma~\ref{bridge}.  For $A|B$ nontrivial we have by Lemma~\ref{kal} that $A$ is contiguous in any circular order $c$ for which $\mathbf{d}$ is Kalmanson. Therefore $A$ is contiguous in $c$ for $c$ consistent with $N$. Therefore $A|B$ is in $N$ by Lemma~\ref{contig}, as claimed.
 Thus the splits of $s = S_w(N)$ are a subset of the splits of $\Sigma(N)$. Then by our earlier Galois connection from Theorem~\ref{unwgalois}, we have that the splits of $L(s)$ are a subset of the splits of $N$, and thus $L_w(s) \le N$, since $L(s)$ and $L_w(s)$ have the same set of splits. 
\end{proof}

\begin{figure}
    \centering
    \includegraphics[width=\textwidth]{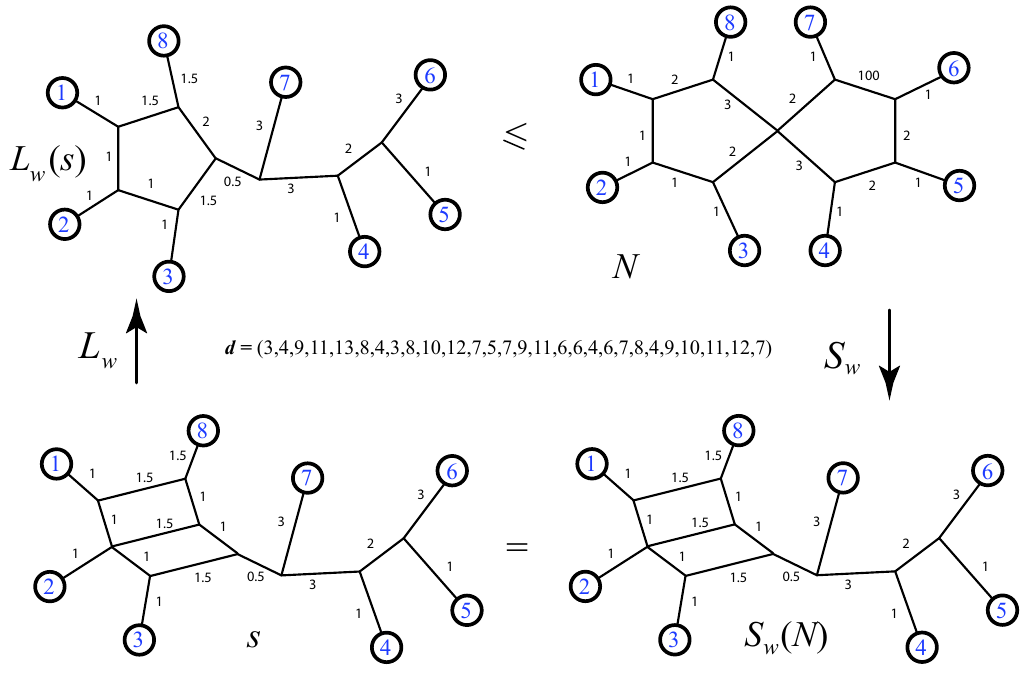}
    \caption{Example of the Galois connection for weighted networks. Here the central vector is Kalmanson, and is shared by all the networks: $\mathbf{d} = \mathbf{d}_N = \mathbf{d}_s.$}
    \label{fig:galois_w}
\end{figure}

\begin{thm}
 The Galois connection via $L_w$ and $S_w$ is a coreflection (but not a poset isomorphism).
\end{thm}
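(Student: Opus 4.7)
The plan is twofold: first, show that the unit $s \le S_w(L_w(s))$ is always an equality, making $L_w$ injective and $S_w$ surjective (which is the defining feature of a coreflection, dual to the reflection notion of Theorem~\ref{reflection}); second, exhibit a weighted 1-nested network $N$ for which $L_w(S_w(N)) \ne N$, which rules out a poset isomorphism.

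For the first step, I would invoke Theorem~\ref{wgalois} with $N$ specialized to $L_w(s)$. Reflexivity of $\le$ on the weighted 1-nested poset gives $L_w(s) \le L_w(s)$, so the biconditional in that theorem yields $s \le S_w(L_w(s))$. By Definition~\ref{defw}, however, every relation $\le$ in the weighted circular split poset is already an equality. Hence $s = S_w(L_w(s))$ for every weighted outer-path circular split network $s$, i.e.\ $S_w \circ L_w = \mathrm{id}$. This is exactly the statement that the unit is an identity, so the connection is a coreflection: $L_w$ is injective and $S_w$ is surjective.

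For the second step, I would take $N$ to be the weighted $4$-leaf binary phylogenetic tree with topology $((1,2),(3,4))$, leaf edges of weight $1$, and internal edge of weight $0$. All six of its pairwise distances equal $2$, matching the distance vector of the $4$-leaf star tree $T$ with unit leaf weights. By Lemma~\ref{kal} this Kalmanson vector is represented by a unique weighted circular split system containing the trivial splits, so $S_w(N) = T$. Since $T$ is already a tree, its smoothed exterior subgraph is itself, hence $L_w(T) = T$, so $L_w(S_w(N)) = T$. But $T \neq N$ in the weighted 1-nested poset, because $N$ displays the nontrivial split $\{1,2\}|\{3,4\}$ while $T$ does not. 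Therefore $L_w \circ S_w \ne \mathrm{id}$, $L_w$ is not surjective, and the connection fails to be a poset isomorphism.

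The main care-point, rather than an obstacle, is verifying that the counterexample genuinely produces two distinct elements of the weighted 1-nested poset under Definition~\ref{defw}: they must carry identical distance vectors but differ in their split sets. The $4$-leaf example accomplishes this cleanly, since shrinking the internal edge to weight zero preserves every pairwise distance yet keeps the extra nontrivial split of $N$ intact. Once that is in hand and the trivial collapse of the unit is noted, the theorem drops out of the Galois biconditional of Theorem~\ref{wgalois} without further computation.
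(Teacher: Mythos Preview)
Your argument for the coreflection is correct and matches the paper's: both establish $S_w\circ L_w=\mathrm{id}$ on outer-path networks (you via the adjunction and the triviality of $\le$ from Definition~\ref{defw}, the paper by a direct distance-vector computation), and from this the surjectivity of $S_w$ / injectivity of $L_w$ follows.

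Your counterexample for the ``not an isomorphism'' part is shaky. You take $N$ to be a weighted \emph{tree} with an internal edge of weight $0$ and argue $S_w(N)\ne N$. But the paper explicitly records, just before Theorem~\ref{wgalois}, that ``when restricted to the weighted trees, $L_w$ and $S_w$ are both the identity function''; and in the weighting discussion it notes that a weight of $0$ can mean the edge is collapsed. So within the paper's conventions your $N$ and your $T$ are (or at least risk being) the same object, and the example evaporates. The paper itself avoids this by producing two genuinely distinct \emph{positive}-weight $1$-nested networks (with cycles) sharing the same distance vector (the upper inequality in Figure~\ref{fig:galois_w}), hence the same image under $S_w$; equivalently, as a corollary it notes that a $4$-cycle whose four side-weights are not two matching pairs can never lie in the image of $L_w$. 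Either of these positive-weight witnesses would repair your second step without altering the rest of your argument.
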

\begin{proof}
 We demonstrate that $S_w$ is surjective, onto the outer-path circular split networks.  We also point out that $S_w$ is not injective. 
First we show that for every weighted outer-path circular split network $s$, there exists a 1-nested weighted
network $N$, such that $S_w(N) = s.$ Given $s$ outer-path, let $N= L_w(s).$  Then $\mathbf{d}_N = \mathbf{d}_s$ by construction. Thus $S_w(N) = s$,
since neighbor-net always gives the unique weighted split network for any Kalmanson metric $\mathbf{d}.$ Next, $S_w$ is not injective. To see this, consider the two weighted networks in the upper inequality pictured in Figure~\ref{fig:galois_w}. Their respective sets of splits are unequal, but they give rise to the same distance vector $\mathbf{d}.$ Thus they have the same image under $S_w.$   
\end{proof}

As corollaries (by standard Galois theory) it is implied that $L_w$ is injective, from the outer-path circular split networks, but not surjective. For an example of the non-surjectivity of $L_w$ we can observe that a cycle of length 4 with one side of weight larger than all the others will never arise as the image of $L_w,$ since cycles of length 4 would be preserved as such, but with pairs of matching weights. Note that this is exactly the reverse of the situation for the unweighted version of the map, $L.$ In the weighted version, $L_w$ is one-to-one and $S_w$ provides an inverse function when restricted to the range of $L_w$ (in turn, restricted to the domain of outer-path circular split networks.) Thus $S_w\circ L_w$ is the identity on outer-path circular split networks, but $L_w \circ S_w$ is decreasing. Figure~\ref{fig:galois_w} exhibits the latter case directly.

\section{Implications for Polytopes}\label{sec_poly}
Recently in \cite{durell-forcey} we described for each $n$ a sequence of polytopes that interpolate between the well-known Symmetric Travelling Salesman Polytope (STSP($n$)) and the Balanced Minimum Evolution Polytope (BME($n$)). The new polytopes are called the level-1 network polytopes BME($n,k$) for $0\le k \le n-3$. Each is of dimension ${n \choose 2} - n.$ After scaling, all of their vertices are located at barycenters of the faces of STSP($n$), and each  BME($n,k$) is nested inside of BME($n,j$) for $j\le k.$  In this nested polytope picture, the largest is BME($n,0$) which is (a scaled version of) STSP($n$) and the smallest is BME($n,n-3$) = BME($n$). Here we review some basic definitions and results, and then discuss new insights.

\begin{definition}\label{e:bmenkvert} For a binary, 1-nested phylogenetic network $N$, the vector ${\mathbf x}(N)$ is defined to have lexicographically ordered components ${x}_{ij}(N)$ for each unordered pair of distinct leaves $i,j \in [n]$ as follows, where $C[n]$ is the cyclic orders of $[n]$.

$$ {x}_{ij}(N) = \begin{cases} 2^{k-b_{ij}} & \text{if there exists $c\in C[n]$ consistent with $N$; with $i,j$  adjacent in $c$,}\\ 0 & \text{otherwise.} \end{cases}
$$

where $k$ is the number of bridges in $N$ and $b_{ij}$ is the number of bridges crossed on any path from $i$ to $j$.

The  convex  hull  of  all  the   ${\mathbf x}(N)$ such that binary $N$ has $k$ nontrivial bridges is the level-1 network polytope BME($n,k$). As shown in \cite{durell-forcey}, the vertices of BME($n, k$) are precisely the vectors ${\mathbf x}(N)$ for $N$ binary with $n$ leaves and $k$ nontrivial bridges.  \end{definition}
 
Also as shown in \cite{durell-forcey}, an equivalent definition of the vector $\mathbf{x}(N)$ is the vector sum of the vertices of the STSP($n$) which correspond to cyclic orders consistent with $N$. The vertices of STSP($n$) are the incidence vectors $\mathbf{x}(c)$ for each cyclic order $c$ of $n$, where the $i,j$ component is 1 for $i$ and $j$ adjacent in the order $c$, 0 otherwise. This alternative definition may be applied to any 1-nested phylogenetic network, not just the binary ones.
\begin{definition}
For a 1-nested phylognetic network $N$, the vector $\mathbf{x}(N) = \sum \mathbf{x}(c)$ where the sum is over all cyclic orders of $[n]$ consistent with $N.$ 
\end{definition}
Note that for phylogenetic trees  $t$ (with nodes of any degree), this definition $\mathbf{x}(t)$ agrees with the definition of the coefficient $n_t$ in \cite{Steel}, in the proof of Theorem 4.2 of that paper.

 A large body of knowledge exists about the facets of BME($n,k$), especially if we include the special cases of $k=0$ and $k=n-3$. For $k=0$ the vertices are cyclic orders, and the polytope BME($n,0)$ is the Symmetric Travelling Salesman polytope. For $k=n-3$ the vertices are phylogenetic trees and the polytope BME($n,n-3$) = BME($n$).   The facets of STSP($n$) are well studied, from \cite{dantzig}, to \cite{grpa1} and \cite{grpa2}, with a nice survey in \cite{rodin}. The facets of BME($n$) are first described in \cite{forcey2015facets} and  \cite{splito}. A class of facets shared by all the polytopes BME($n,k$) are the split facets: each corresponds to a nontrivial split of $[n],$ as shown in \cite{durell-forcey}. (For $k=n-3$ the split must have parts larger than 3, it is conjectured that this is not necessary for $0\le k < n-3$).

Two polytopes are \emph{nested} when one is contained in the other, with all vertices of the smaller on faces of the larger. In \cite{durell-forcey} it is shown that for any $n$ the scaled polytopes ($2^{n-3-k}$)BME($n,k$) are sequentially nested, from $k=0$, the largest, to $k=n-3$, the smallest. Each vertex of a smaller scaled polytope is at the barycenter of a face of BME($n,0$).
Figure~\ref{splitfacet} shows a facet of BME(5,0) which corresponds to the split $\{1, 2\}|\{3, 4, 5\}.$
\begin{figure}
    \centering
    \includegraphics{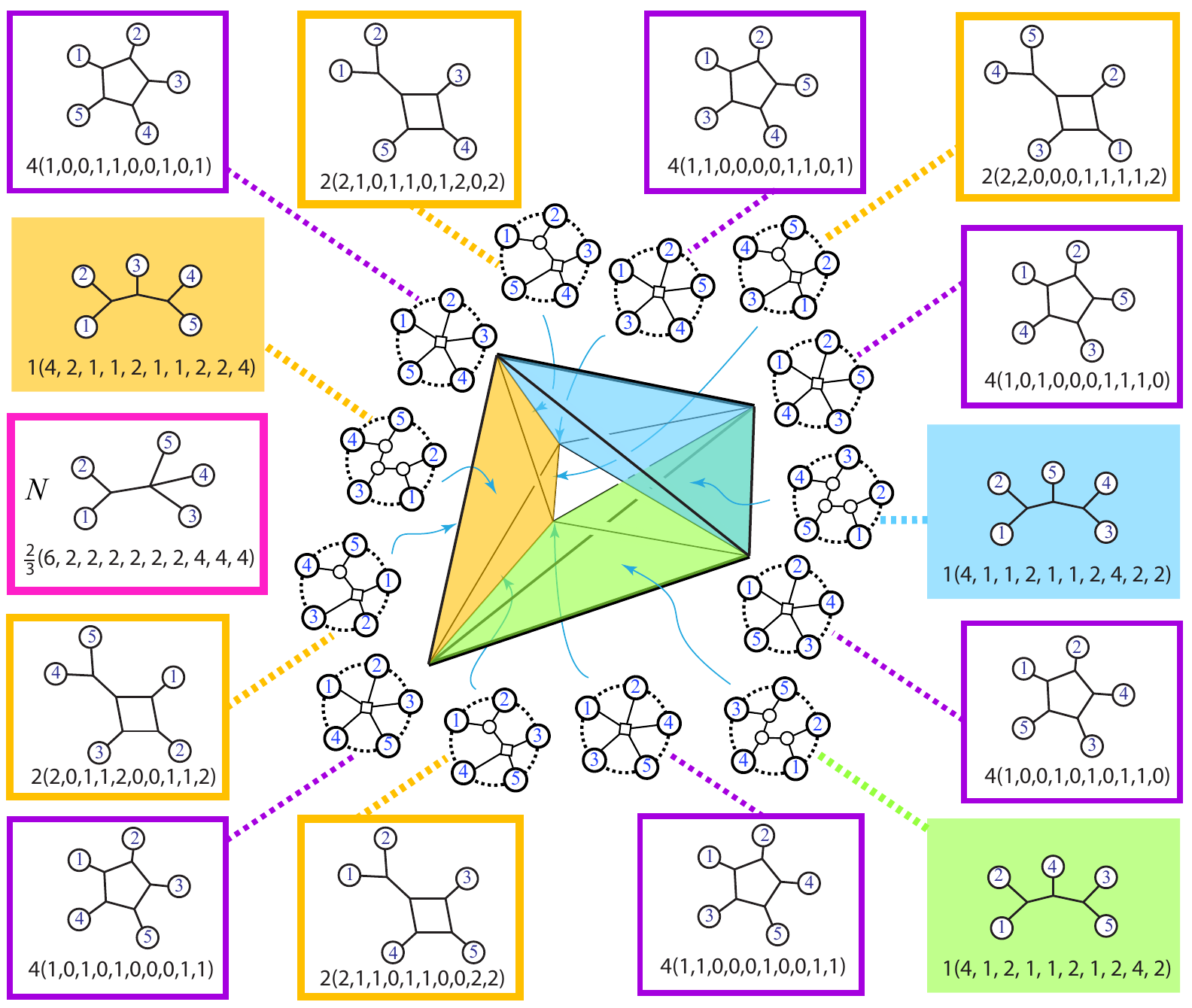}
    \caption{The scaled split facet $F_0(N)$ of BME(5,0) = STSP(5). This 4D facet corresponds to the split $\{1, 2\}|\{3, 4, 5\}$ (pictured as the tree $N$, center-left), and is also known as a subtour elimination facet. Three of its tetrahedral subfaces correspond to networks, and are shaded. Vertices and some faces are labeled with both networks and PC-trees. A (scaled version of) the vector $\mathbf{x}$  is shown  beneath each network: the barycenter of the face represented by that network.}
    \label{splitfacet}
\end{figure}

In \cite{durell-forcey}, Theorems 8 and 9, we show that for $s$ a weighted circular split network with $n$ leaves and $k$ bridges such that $L(\overline{s})$ is binary, the dot product $\mathbf{x}(N)\cdot \mathbf{d}_s$ is minimized uniquely over BME($n,k$) at the vertex $\mathbf{x}(L(\overline{s}))$. Furthermore, for a given 1-nested network $N$ we can often find it as a face in multiple polytopes. In fact, from Theorem 11 of \cite{durell-forcey}, we have:

\begin{theorem}\label{t:faces}
Every $n$ leaved 1-nested unweighted network $N$ with $m$ bridges corresponds to a face $F_k(N)$
of each polytope BME($n,k$)  for $0\le k \le m$. \vspace{.12in} 

That face has vertices  $\mathbf{x}(N')$ for all the  binary 1-nested $k$-bridge networks $N'$ such that $N \le N'.$  \end{theorem}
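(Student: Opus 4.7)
The plan is to realize $F_k(N)$ as the minimizing face of a cleverly chosen linear functional on $\mathrm{BME}(n,k)$, using the unweighted Galois connection to manufacture a Kalmanson cost vector adapted to $N$.

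The first step is a counting lemma: for every binary 1-nested network $N'$ on $[n]$ with $k$ non-trivial bridges, the number of cyclic orders of $[n]$ consistent with $N'$ equals $|C(N')| = 2^{k}$. I would verify this in the $PC$-tree model. If $I$ denotes the number of internal nodes of the $PC$-tree of $N'$, then the $I-1$ internal edges are in bijection with the non-trivial bridges of $N'$, so $I = k+1$. Each internal node contributes a single independent $\mathbb{Z}/2$ choice to a planar embedding: at a trivalent $P$-node, the choice between its two cyclic orders of three subtrees, and at a $C$-node, the choice between the given cyclic order and its reverse. A single global reflection identifies embeddings in pairs and passes from embeddings to cyclic orders of leaves, yielding $|C(N')| = 2^{I}/2 = 2^{k}$.

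The second step is to produce the cost vector from $N$. Let $s$ be any weighted circular split network with $\overline{s} = \Sigma(N)$ and all split weights strictly positive. By Lemma~\ref{contig} we have $C(\Sigma(N)) = C(N)$, and by the reflection property $L\circ\Sigma = \mathrm{id}$ from Theorem~\ref{reflection} we have $L(\overline{s}) = N$. The metric $\mathbf{d}_s$ is then Kalmanson precisely with respect to the cyclic orders in $C(N)$, and a short computation shows that the Hamiltonian tour value $\mathbf{x}(c)\cdot\mathbf{d}_s$ attains a common minimum $T^{\ast} = 2\sum_{A|B\in s} w_s(A|B)$ exactly when $c\in C(N)$, and is strictly larger otherwise.

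The third step applies this to vertices of $\mathrm{BME}(n,k)$. Using the alternative description $\mathbf{x}(N') = \sum_{c\in C(N')}\mathbf{x}(c)$ together with the count from Step~1,
\begin{equation*}
  \mathbf{x}(N')\cdot\mathbf{d}_s \;=\; \sum_{c\in C(N')}\mathbf{x}(c)\cdot\mathbf{d}_s \;\ge\; 2^{k}\,T^{\ast},
\end{equation*}
with equality if and only if every $c \in C(N')$ lies in $C(N)$. Combining Lemma~\ref{refine} with Lemma~\ref{contig}, the containment $C(N')\subseteq C(N)$ is equivalent to the splits of $N$ being contained in those of $N'$, that is, to $N\le N'$. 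Hence the supporting hyperplane $\{\mathbf{x}\cdot\mathbf{d}_s = 2^{k}T^{\ast}\}$ exposes a face of $\mathrm{BME}(n,k)$ whose vertex set is exactly $\{\mathbf{x}(N') : N' \text{ binary } k\text{-bridge with } N \le N'\}$, and this is the face $F_k(N)$.

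The main obstacle is Step~1, the enumeration $|C(N')|=2^{k}$. One must verify carefully that the $P$-node flips and $C$-node reversals are mutually independent modulo the single global reflection, with no accidental coincidences between distinct planar embeddings producing the same cyclic order of leaves. Once that count is secured, Steps~2 and~3 are essentially an application of the paper's Galois machinery together with the standard behavior of Kalmanson metrics under Hamiltonian-cycle minimization, and the face identification follows immediately.
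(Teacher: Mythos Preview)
The paper does not actually prove this theorem; it is imported wholesale from \cite{durell-forcey} (announced as ``Theorem~11'' of that reference), so there is no in-paper argument to compare your proposal against. Your proposal therefore stands as an independent proof, and the strategy---manufacture a Kalmanson cost $\mathbf{d}_s$ from $\Sigma(N)$ with strictly positive weights, then read off the minimizing face of $\mathrm{BME}(n,k)$ from the TSP behaviour of Kalmanson metrics combined with the count $|C(N')|=2^{k}$---is correct and fits naturally with the machinery of the present paper.

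Two places deserve an extra sentence. First, the equality $C(\Sigma(N))=C(N)$ is not literally the statement of Lemma~\ref{contig}; that lemma characterizes which \emph{splits} are displayed in terms of consistent orders, not the reverse. What you need is the (true, easy) observation that a cyclic order is consistent with a 1-nested network, or with a circular split system, precisely when every displayed split has both parts contiguous in it; since $N$ and $\Sigma(N)$ display identical split sets by definition, their consistent orders coincide. Second, your Step~3 shows that the hyperplane $\{\mathbf{x}\cdot\mathbf{d}_s=2^{k}T^{\ast}\}$ supports $\mathrm{BME}(n,k)$ and that a vertex lies on it iff $N\le N'$, but you should also note that this face is nonempty, i.e.\ that for each $0\le k\le m$ some binary $k$-bridge refinement of $N$ exists. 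One quick construction: pick any cyclic order $c$ consistent with $N$, keep any $k$ of the $m$ non-trivial bridges of $N$, and replace each of the resulting $k+1$ pieces by a single cycle carrying the induced arc of $c$; the result is binary with exactly $k$ non-trivial bridges and displays every split of $N$.

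Your Step~1 count is fine: distinct flip-patterns at the internal $P$- and $C$-nodes, modulo a single simultaneous reversal, yield distinct leaf cyclic orders because the leaf order of a planar tree embedding determines the rotation at every internal node. No accidental collisions occur.
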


The implication of the theorem just recounted is that the poset of 1-nested networks (up to equivalence) is found mirrored in the the face posets of the BME($n,k$) polytopes. If $N\le N'$ then $F_k(N')\subseteq F_k(N).$ This follows easily since the set of vertices of $F_k(N)$ will contain the set of vertices of $F_k(N').$ Note that the set of binary 1-nested networks refining the splits displayed by a given 1-nested network depends only on the split system displayed. Thus we can restate the result in terms of $PC$-trees as follows:

\begin{corollary}
The $PC$-trees on $[n]$, ordered by reverse containment of splits, are isomorphic to a sub-poset of the face poset of BME($n,0$) = STSP($n$). Subposets of this poset are also found within faces of each BME($n,k$) for $0\le k \le n-3$.
\end{corollary}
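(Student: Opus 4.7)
The plan is to build the embedding by sending each $PC$-tree $T$ to the face $F_0(N) \subseteq$ BME$(n,0)$, where $N$ is any 1-nested phylogenetic network in the equivalence class associated to $T$ by Theorem~\ref{pctree}. First I would check well-definedness: by Theorem~\ref{t:faces} the vertex set of $F_0(N)$ is $\{\mathbf{x}(N') : N' \text{ is binary $0$-bridge and } N \le N'\}$, and this depends only on the splits displayed by $N$; since Theorem~\ref{pctree} identifies $T$ with exactly that split system, the assignment $T \mapsto F_0(N)$ is a function of $T$ alone.

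Next I would verify order preservation in the stated direction. The implication ``splits$(T) \supseteq$ splits$(T')$ implies $F_0(T) \subseteq F_0(T')$'' is read off directly from the discussion following Theorem~\ref{t:faces}, once one notes that the hypothesis is simply the paper's $T' \le T$. Since ``reverse containment of splits'' means $T \preceq T'$ precisely when splits$(T) \supseteq$ splits$(T')$, this is exactly the monotonicity required.

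The main work is the converse, which simultaneously establishes injectivity. A binary $0$-bridge $1$-nested network is an $n$-cycle with its leaves attached by trivial bridges, and each such network is encoded by its single cyclic order of leaves; thus the vertices of $F_0(N)$ correspond bijectively to the cyclic orders consistent with $N$. If $F_0(T) \subseteq F_0(T')$, then every cyclic order consistent with $N$ is also consistent with $N'$, and by Lemma~\ref{contig} every split of $N'$ must therefore be contiguous in every cyclic order consistent with $N$; another application of Lemma~\ref{contig} says such splits are displayed by $N$. So splits$(N') \subseteq$ splits$(N)$, i.e.\ $T \preceq T'$. Together with the previous step this shows the map is an order isomorphism onto its image, proving the first sentence of the corollary.

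For the second sentence I would repeat the construction with $F_k$ in place of $F_0$, restricted to the sub-poset of $PC$-trees whose network has at least $k$ nontrivial bridges (the domain of $F_k$ given by Theorem~\ref{t:faces}). Monotonicity is unchanged, and the cyclic-order argument carries over once one shows that every cyclic order consistent with $N$ is consistent with at least one binary $k$-bridge refinement of $N$. This last fact is the main obstacle of the general case; I would establish it by resolving the $P$-nodes of the $PC$-tree of $N$ into binary subtrees compatible with the given cyclic order $c$, adjusting the resolution to arrange exactly $k$ nontrivial bridges, so that the resulting binary $1$-nested network refines $N$, has $k$ nontrivial bridges, and is consistent with $c$.
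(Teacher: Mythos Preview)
Your proposal is correct and follows the same skeleton as the paper: identify each $PC$-tree with a 1-nested network via Theorem~\ref{pctree} and send it to the face $F_k$ supplied by Theorem~\ref{t:faces}, noting that this assignment depends only on the split system. The paper's own argument is brief and essentially stops after observing that faces are represented by 1-nested networks and that subfaces arise by adding splits; it does not explicitly verify that the map is order-reflecting (hence injective).

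Your addition---reading off the vertices of $F_0(N)$ as the cyclic orders $c$ with $N\le N_c$ and then using Lemma~\ref{contig} to recover the split system from that set of orders---is exactly the missing step, and it is carried out correctly. One small sharpening: as written, your chain ``$c$ consistent with $N$ $\Rightarrow$ vertex of $F_0(T)$ $\Rightarrow$ vertex of $F_0(T')$ $\Rightarrow$ $c$ consistent with $N'$'' invokes the direction ``$N'\le N_c$ implies $c$ consistent with $N'$'', which is true but is the nontrivial half of the identification. You can bypass it entirely: from $N'\le N_c$ you only need that every split of $N'$ has both parts contiguous in $c$, and then Lemma~\ref{contig} applied to $N$ (not $N'$) finishes the argument.

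For the second sentence, you correctly isolate the one genuine obstacle the paper glosses over---producing, for each cyclic order $c$ consistent with $N$, a binary $k$-bridge refinement of $N$ still consistent with $c$---and your resolution by refining $P$-nodes compatibly with $c$ while controlling the bridge count is the right idea.
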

\begin{proof}
The polytope BME($n,k$) has a subposet of faces, ordered by inclusion,  with maximal elements  represented by the single splits on $[n].$ Subfaces of a face are found by adding splits to the network, but we claim that the resulting split systems must be represented by  $PC$-trees. That is true since the faces are represented by (equivalence classes of) 1-nested networks, in light of Theorem~\ref{t:faces} and Theorem~\ref{pctree}. This identification of subfaces continues until one reaches the binary 1-nested networks with $k$ nontrivial bridges. Those have the maximum number of splits and are the vertices of the polytope BME($n,k$). Note that binary 1-nested  networks are in bijection with $PC$-trees for which the all nodes are cyclic (class $C$) except for the nodes of degree 3 (which are permutable, class $P$.)  Thus BME($n,0$) = STSP($n$) has a face for every  $PC$-tree on $[n]$, since the vertices are those networks with no trivial bridges, which are in bijection with the cyclic orders of $[n].$
\end{proof}

 We can use the Galois connections for both weighted and unweighted networks to more fully describe how the Kalmanson metrics $\mathbf{d}$ relate to the BME($n,k$) polytopes. It turns out that the unique split network $S_w(N)$ associated to a weighted 1-nested phylogenetic network $N$ has a set of splits which are all displayed by the binary networks at which the dot product with $\mathbf{d}_N$ is minimized.  
\begin{figure}
    \centering
    \includegraphics[width=4in]{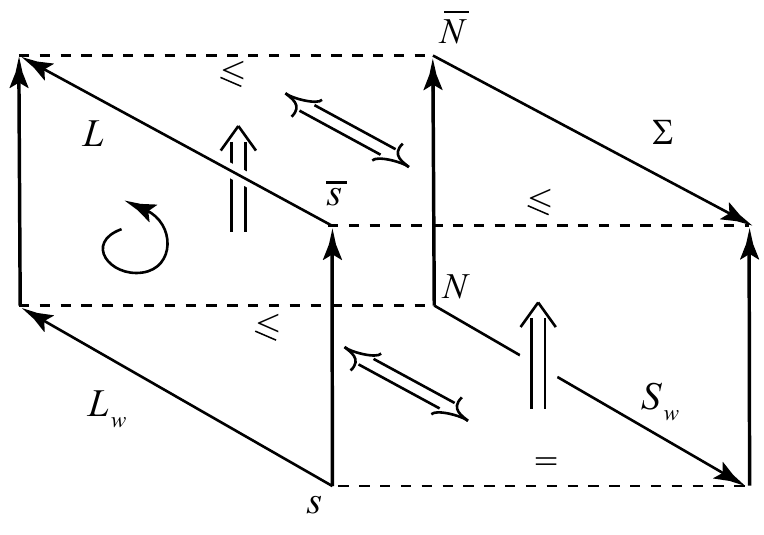}
    \caption{Vertical maps shown here are forgetting the weights. The two Galois connections are shown as biconditionals (double arrows). The quadrilateral at the left commutes, on the right does not. The implication shown by the front rectangle is the inequality in the proof of Theorem~\ref{newth}.}
    \label{fig:galois_b}
\end{figure}

\begin{thm}\label{newth}
 Given any weighted 1-nested phylogenetic network ${N}$ with $n$ leaves, the product $\mathbf{x}(\hat{N})\cdot \mathbf{d}_{{N}}$ is minimized over BME($n,k$)  precisely for the unweighted binary networks $\hat{N}$ with $k$ bridges such that $\overline{S_w({N})} \le \Sigma(\hat{N})$.\\
\end{thm}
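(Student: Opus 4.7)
The plan is to reduce the problem to the circular-split side by setting $s = S_w(N)$, so that $\mathbf{d}_N = \mathbf{d}_s$, and to translate the Galois condition $\overline{S_w(N)} \le \Sigma(\hat N)$ into $L(\overline{s}) \le \hat N$ via Theorem~\ref{unwgalois}. By Theorem~\ref{t:faces}, the binary $k$-bridge networks $\hat N$ satisfying $L(\overline{s}) \le \hat N$ are exactly the vertex labels of the face $F_k(L(\overline{s}))$ of BME$(n,k)$, so it suffices to show that the linear functional $\mathbf{x} \mapsto \mathbf{x} \cdot \mathbf{d}_s$ attains its minimum over BME$(n,k)$ precisely on that face.

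For the core computation I would use $\mathbf{x}(\hat N) = \sum_c \mathbf{x}(c)$ summed over cyclic orders $c$ consistent with $\hat N$, and regroup each tour length by split:
\[
\mathbf{x}(c) \cdot \mathbf{d}_s \;=\; 2 \sum_{A|B \in s} w_s(A|B)\, p(c, A|B),
\]
where $p(c, A|B) \ge 1$ is the number of maximal arcs of $A$ in $c$. Hence $\mathbf{x}(c) \cdot \mathbf{d}_s \ge 2W$, with $W = \sum_{A|B \in s} w_s(A|B)$, and equality holds iff every split of $\overline{s}$ is contiguous in $c$; by Lemma~\ref{contig} this is exactly the condition that $c$ be consistent with $\overline{s}$. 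Summing over the consistent cyclic orders of $\hat N$ --- of which there are $2^k$, a count uniform across binary $k$-bridge networks that can be read off the PC-tree model --- the lower bound $2^{k+1} W$ is attained iff every consistent $c$ of $\hat N$ is consistent with $\overline{s}$, which, by a second application of Lemma~\ref{contig}, is the Galois condition $\overline{s} \le \Sigma(\hat N)$.

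I expect the main obstacle to be justifying the uniform $2^k$ count cleanly; if that combinatorial step proves fussy, I would replace it with a perturbation argument. For any $\hat N_0$ with $L(\overline{s}) \le \hat N_0$, let $s_\epsilon$ be the weighted split network with split set $\Sigma(\hat N_0)$, carrying weight $w_s(A|B)$ on splits already in $s$ and $\epsilon > 0$ on the rest. The reflection $L \circ \Sigma = \mathrm{id}$ from Theorem~\ref{reflection} gives $L(\overline{s_\epsilon}) = \hat N_0$, which is binary with $k$ bridges, so Theorems~8 and 9 of \cite{durell-forcey} force $\mathbf{x}(\hat N_0)$ to be the unique minimizer of $\mathbf{x} \cdot \mathbf{d}_{s_\epsilon}$ over BME$(n,k)$; sending $\epsilon \to 0$ transfers this (non-strict) minimization to $\mathbf{x} \cdot \mathbf{d}_s$. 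The strict inequality at $\hat N$ violating the Galois condition is still delivered by the per-cyclic-order computation of the second paragraph, completing the theorem.
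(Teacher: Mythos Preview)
Your argument is correct, and it takes a genuinely different route from the paper's.

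The paper's proof is entirely a reduction to black-box results: it invokes Theorems~8, 9, and 11 of \cite{durell-forcey} to assert that the minimizers over BME$(n,k)$ are exactly the binary $k$-bridge $\hat N$ with $\overline{N}\le\hat N$, and then chains the two Galois connections (Theorems~\ref{wgalois} and~\ref{unwgalois}) together with Lemmas~\ref{overl} and~\ref{comm} to obtain $\overline{S_w(N)}\le\Sigma(\overline{N})\le\Sigma(\hat N)$. No explicit computation of $\mathbf{x}(\hat N)\cdot\mathbf{d}_N$ appears. By contrast, you bypass the cited minimization theorems entirely and compute the linear functional directly at the cyclic-order level via the identity $\mathbf{x}(c)\cdot\mathbf{d}_s = 2\sum_{A|B\in s} w_s(A|B)\,p(c,A|B)$, read off the tight lower bound $2W$, and then aggregate. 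Your approach buys self-containment and gives both directions of ``precisely'' transparently; the paper's approach is shorter but rests on the external reference for the hard inequality, and as written establishes only the inclusion $\{\hat N:\overline{N}\le\hat N\}\subseteq\{\hat N:\overline{S_w(N)}\le\Sigma(\hat N)\}$ rather than equality.

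Two small points. First, your invocation of Lemma~\ref{contig} for the equivalence ``every split of $\overline{s}$ is contiguous in $c$ $\Leftrightarrow$ $c$ is consistent with $\overline{s}$'' is not quite what that lemma says (it characterizes \emph{displayed} splits, not \emph{consistent} orders); the fact you need follows instead from the polygonal description of circular split systems mentioned just after Figure~\ref{penta}. Second, your worry about the uniform count of $2^k$ consistent cyclic orders for binary $k$-bridge networks is well placed but easily discharged: in the PC-tree model every $C$-node contributes a single cyclic arrangement and every degree-$3$ $P$-node contributes two, and after quotienting by global reflection one obtains $2^{\#P\text{-nodes}-1}$; a binary $1$-nested network with $k$ nontrivial bridges has exactly $k+1$ such $P$-nodes. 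Your perturbation fallback is sound but unnecessary once this count is in hand.
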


\begin{proof}
 We know from Theorems 8, 9, and 11 of \cite{durell-forcey} that the dot product is minimized precisely for binary networks $\hat{N}$ with $k$ bridges such that $\overline{N} \le \hat{N}.$ (These theorems are repeated here in Definition~\ref{e:bmenkvert} and Theorem~\ref{t:faces}.) Thus  the dot product is minimized if and only if $\Sigma(\overline{N}) \le \Sigma(\hat{N}).$
 We also claim that for any weighted 1-nested phylogentic network $N,$ the following inequality holds:  $$\overline{S_w(N)}\le \Sigma(\overline{N}).$$ The claim follows from the theorems of this paper, as illustrated in Figure~\ref{fig:galois_b}. Let $s= S_w(N).$ First, $s = S_w(N)$ if and only if $L_w(s) \le N$, by Theorem~\ref{wgalois}. Next $L_w(s) \le N$ implies that $\overline{L_w(s)}\le \overline{N}$ by Lemma~\ref{overl}, which implies that $L(\overline{s})\le \overline{N}$ by Lemma~\ref{comm}. The latter inequality holds, by Theorem~\ref{unwgalois}, if and only if $\overline{s} \le \Sigma(\overline{N}).$ Thus we have $\overline{S_w(N)} \le \Sigma(\overline{N}) \le \Sigma(\hat{N}).$ 
\end{proof}


Notice that Theorem~\ref{newth} does not mention the number of bridges of $N.$ Thus the number of bridges of the networks $\hat{N}$ where the dot product is minimized can vary. This is seen in Examples~\ref{ex1} and ~\ref{ex2}. Example~\ref{ex3} shows some networks $\hat{N}$ which fail to meet the criteria and thus exemplify the strictness of the minimization. In terms of the face structure of the polytopes, we can say that the face associated to any unweighted 1-nested network $\overline{N}$ is a subface of any face associated to the exterior 1-nested network of the unique split network corresponding to a weighted version $N.$

\begin{thm}\label{subth}
 Given any weighted 1-nested phylogenetic network $N$, with $m\ge k$ bridges, there is a face $F_k(\overline{N})$ of BME$(n,k)$  which is a subface of $F_k(L(\overline{S_w(N)}))$.
\end{thm}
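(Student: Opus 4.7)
The plan is to reduce the set-theoretic containment of faces to a network inequality, and then chain together the two Galois connections together with the weight-forgetting compatibility lemma to establish that inequality.

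First I would note that by Theorem~\ref{t:faces}, the face $F_k(\overline{N})$ exists because $m \ge k$. I also need $F_k(L(\overline{S_w(N)}))$ to exist as a face of BME($n,k$), which by the same theorem requires $L(\overline{S_w(N)})$ to have at least $k$ bridges; this follows from Lemma~\ref{bridge} (which shows $S_w$ does not decrease the number of bridges) together with the fact that $L$ preserves bridges (discussed just before the Galois connection subsection). Thus $L(\overline{S_w(N)})$ has at least $m \ge k$ bridges, so the face is well-defined.

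Next, I would exploit the order-reversing property of $F_k$ recorded directly after Theorem~\ref{t:faces}: if $P \le P'$ then $F_k(P') \subseteq F_k(P)$. Applying this with $P = L(\overline{S_w(N)})$ and $P' = \overline{N}$, the desired containment
\[
F_k(\overline{N}) \subseteq F_k(L(\overline{S_w(N)}))
\]
reduces to proving the single inequality $L(\overline{S_w(N)}) \le \overline{N}$ in the poset of unweighted 1-nested phylogenetic networks.

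This inequality is exactly the horizontal implication encoded by the front face of Figure~\ref{fig:galois_b}, and the argument is the same chain used in the proof of Theorem~\ref{newth}. Let $s = S_w(N)$. Since $s \le S_w(N)$ trivially (with equality in the weighted poset by Definition~\ref{defw}), the weighted Galois connection of Theorem~\ref{wgalois} yields $L_w(s) \le N$. Forgetting weights via Lemma~\ref{overl} gives $\overline{L_w(s)} \le \overline{N}$, and Lemma~\ref{comm} rewrites the left side as $L(\overline{s}) = L(\overline{S_w(N)})$. Combining, $L(\overline{S_w(N)}) \le \overline{N}$, which is what was needed.

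There is no serious obstacle here: the result is essentially a bookkeeping corollary of Theorems~\ref{unwgalois}, \ref{wgalois}, and~\ref{t:faces} together with Lemmas~\ref{overl},~\ref{comm}, and~\ref{bridge}. The only subtlety worth highlighting explicitly is the verification that the right-hand face exists in the correct polytope, which is why Lemma~\ref{bridge} is invoked; without that observation the statement "$F_k(L(\overline{S_w(N)}))$ is a face of BME($n,k$)" would not be automatic.
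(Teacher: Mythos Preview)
Your proof is correct and follows essentially the same route as the paper: use the weighted Galois connection (Theorem~\ref{wgalois}) to get $L_w(S_w(N))\le N$, forget weights via Lemma~\ref{overl}, rewrite via Lemma~\ref{comm} to obtain $L(\overline{S_w(N)})\le \overline{N}$, and then invoke the order-reversing correspondence from Theorem~\ref{t:faces}. Your explicit verification that $L(\overline{S_w(N)})$ has at least $k$ bridges (via Lemma~\ref{bridge} and bridge-preservation by $L$) is a detail the paper leaves implicit.
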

\begin{proof}
 By definition, $L(\overline{S_w(N)}) = \overline{L_w(S_w(N))}.$ By the weighted Galois connection of Theorem~\ref{wgalois}, $L_w(S_w(N))\le N.$ Thus the unweighted versions have the same relationship: $\overline{L_w(S_w(N))}\le \overline{N},$ and so the face corresponding to $\overline{N}$ is a subface of the face corresponding to $\overline{L_w(S_w(N))}.$ 
 \end{proof}
 
The statement of Theorem~\ref{subth} is equivalent to saying that the vertices in the face corresponding to $L(\overline{S_w(N)})$ contain as subset the vertices in the face corresponding to $\overline{N}$. Thus we have the following: 
 
\begin{corollary}
 Given any Kalmanson metric $\mathbf{d}$ on $[n]$ , the product $\mathbf{x}(\hat{N})\cdot \mathbf{d}$ is minimized simultaneously for the binary networks $\hat{N}$ with $k$ bridges such that $\overline{\mathcal{N}(\mathbf{d})} \le \Sigma(\hat{N})$,  where $\mathcal{N}(\mathbf{d})$ denotes the output of the neighbor-net algorithm.\\
\end{corollary}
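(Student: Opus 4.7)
The plan is to reduce the corollary directly to Theorem~\ref{newth} by exhibiting a weighted 1-nested phylogenetic network $N$ whose distance vector is $\mathbf{d}$ and whose image under $S_w$ is $\mathcal{N}(\mathbf{d})$. Since $\mathbf{d}$ is Kalmanson, Lemma~\ref{kal} together with the correctness of neighbor-net produces a unique weighted circular split network $s=\mathcal{N}(\mathbf{d})$ with $\mathbf{d}_s = \mathbf{d}$.

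Assuming $s$ is outer-path (the generic case for metrics arising from 1-nested networks), I would set $N := L_w(s)$. By Lemma~\ref{lemw}, $\mathbf{d}_N = \mathbf{d}_s = \mathbf{d}$, and the uniqueness clause of Lemma~\ref{kal} forces $S_w(N) = s = \mathcal{N}(\mathbf{d})$. Theorem~\ref{newth} then applies directly to $N$: the dot product $\mathbf{x}(\hat{N}) \cdot \mathbf{d}_N = \mathbf{x}(\hat{N}) \cdot \mathbf{d}$ is minimized over BME$(n,k)$ precisely at the binary $k$-bridge networks $\hat{N}$ with $\overline{S_w(N)} \le \Sigma(\hat{N})$, and substituting $S_w(N) = \mathcal{N}(\mathbf{d})$ yields the statement.

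The main obstacle is the possibility that $s = \mathcal{N}(\mathbf{d})$ fails to be outer-path, in which case Lemma~\ref{lemw} rules out any 1-nested $N$ with $S_w(N) = s$, so the reduction above cannot be carried out literally. My plan for that case is to bypass $N$ and work directly on the polytope side: the underlying result from \cite{durell-forcey} (Theorems 8 and 9, recapped in the text before Theorem~\ref{t:faces}) is stated for \emph{arbitrary} weighted circular split networks $s$ and characterizes the minimizers of $\mathbf{x}(\hat{N})\cdot \mathbf{d}_s$ over BME$(n,k)$ as the binary $k$-bridge $\hat{N}$ refining $L(\overline{s})$. Applying the unweighted Galois connection of Theorem~\ref{unwgalois} translates $L(\overline{s}) \le \hat{N}$ into the equivalent condition $\overline{s} \le \Sigma(\hat{N})$, which is $\overline{\mathcal{N}(\mathbf{d})} \le \Sigma(\hat{N})$, giving the corollary unconditionally. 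I expect this Galois-adjoint translation to be the only nontrivial step, and it is essentially free once Theorem~\ref{unwgalois} is in hand.
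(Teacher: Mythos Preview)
Your approach is correct and essentially matches the paper's. The paper states this corollary without an explicit proof, as following immediately from the sentence preceding it (that the vertices of $F_k(L(\overline{S_w(N)}))$ contain those of $F_k(\overline{N})$, via Theorem~\ref{subth}). Your Case~1 reduction---setting $N = L_w(\mathcal{N}(\mathbf{d}))$ and invoking Theorem~\ref{newth}---is exactly how one would spell this out; the verification that $S_w(L_w(s)) = s$ for outer-path $s$ is already contained in the proof of the coreflection theorem.

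Your Case~2 analysis is in fact more careful than the paper's treatment: the paper tacitly assumes $\mathcal{N}(\mathbf{d})$ is outer-path (equivalently, that $\mathbf{d} = \mathbf{d}_N$ for some 1-nested $N$) and does not separately address general Kalmanson metrics. Your bypass via the Durell--Forcey results plus the unweighted Galois connection of Theorem~\ref{unwgalois} is the natural route. One small correction: the recap in the paragraph before Theorem~\ref{t:faces} only states the case where $L(\overline{s})$ is binary with $k$ bridges, yielding a \emph{unique} minimizer; the face-level generalization you are invoking (minimizers are exactly the binary $k$-bridge $\hat{N}$ with $L(\overline{s}) \le \hat{N}$) is Theorem~11 of \cite{durell-forcey}, which appears here as Theorem~\ref{t:faces} together with the first line of the proof of Theorem~\ref{newth}. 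Once that is granted, the translation $L(\overline{s}) \le \hat{N} \iff \overline{s} \le \Sigma(\hat{N})$ via Theorem~\ref{unwgalois} is indeed immediate.
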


Thus $L_w(\mathcal{N}(\mathbf{d}))$ is a good candidate for the best fit of any 1-nested phylogenetic network to a given $\mathbf{d}.$  Since $\Sigma$  and $S_w$ preserve bridges, we have:

\begin{corollary}
Given a Kalmanson vector $\mathbf{d}$ on $[n],$ the number $k$ of nontrivial bridges in $\mathcal{N}(\mathbf{d})$ is also the smallest value of $k$ such that BME($n,k$) has a unique vertex $\mathbf{x}(N)$ at which the dot product with $\mathbf{d}$ is minimized over that polytope. 
\end{corollary}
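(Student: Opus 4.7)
The plan is to invoke the immediately preceding corollary and then track bridge counts through the bridge-preserving maps $L$ and $\Sigma$. I set $s=\mathcal{N}(\mathbf{d})$, let $k^{*}$ denote the number of nontrivial bridges of $s$, and write $M=L(\overline{s})$. The preceding corollary identifies the minimizers of $\mathbf{x}(\hat{N})\cdot\mathbf{d}$ over BME$(n,k)$ as the binary 1-nested networks $\hat{N}$ with $k$ nontrivial bridges satisfying $\overline{s}\le\Sigma(\hat{N})$, which by Theorem~\ref{unwgalois} is equivalent to $M\le\hat{N}$. Because $L$ preserves bridges (as noted in Section~\ref{functions}), $M$ has exactly $k^{*}$ nontrivial bridges.

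First I would establish uniqueness at $k=k^{*}$. If $\hat{N}$ is binary with $k^{*}$ bridges and $M\le\hat{N}$, the $k^{*}$ bridge-splits of $M$ must coincide with the $k^{*}$ bridges of $\hat{N}$: since $\Sigma$ preserves bridges, displaying one of $M$'s bridge-splits instead as a non-bridge minimal cut in $\hat{N}$ would force $\hat{N}$ to carry an extra bridge-split of its own to still total $k^{*}$ bridges, contradicting the bridge count. So the bridge skeletons agree, and within each bridge-free component $\hat{N}$ is a binary refinement of $M$ introducing no new nontrivial bridges. This compels $\hat{N}=M$ whenever $M$ is already binary, giving the unique minimizer $\mathbf{x}(M)$.

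Next I would show non-uniqueness for $k<k^{*}$. Any valid $\hat{N}$ with $k$ bridges must still display all $k^{*}$ bridge-splits of $s$, so $k^{*}-k\ge 1$ of those splits are forced to appear as minimal cuts inside cycles of $\hat{N}$. Absorbing a bridge of $s$ into a cycle of $\hat{N}$ admits multiple combinatorial realizations --- which collection of bridges to group into a common cycle, and for each grouping which consistent cyclic order to impose on the attached subpieces. Different choices produce distinct binary $\hat{N}$ still satisfying $M\le\hat{N}$, so more than one vertex attains the minimum.

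The hard part is the binary-refinement uniqueness in the first step: in particular the need that $M=L(\overline{\mathcal{N}(\mathbf{d})})$ be already binary, with no P-node of degree greater than three. This rests on a maximal-resolution property of neighbor-net on a Kalmanson input, namely that the output splits resolve each cycle sufficiently so that $L$ produces a binary phylogenetic network. Degenerate Kalmanson vectors that permit multiple tied cyclic orders (for instance $d_{ij}=w_i+w_j$) sit on the boundary where this fails, and would have to be handled by restricting to generic $\mathbf{d}$ or argued as a separate case.
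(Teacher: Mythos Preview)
Your approach is the correct unpacking of the paper's own argument, which consists only of the single clause ``Since $\Sigma$ and $S_w$ preserve bridges.'' You translate this via the Galois connection into the condition $M=L(\overline{s})\le\hat{N}$, track the bridge count through $L$, and split into the two cases $k=k^{*}$ and $k<k^{*}$; that is exactly the intended route, just written out in more detail than the paper provides.

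The gap you isolate at the end is genuine, and the paper's one-line justification does not address it either. Uniqueness at $k=k^{*}$ requires $M=L(\overline{\mathcal{N}(\mathbf{d})})$ to already be binary, and this is precisely the standing hypothesis in the result being invoked: the paper itself quotes Theorems~8--9 of \cite{durell-forcey} only under the assumption ``such that $L(\overline{s})$ is binary.'' Your example $d_{ij}=w_i+w_j$ makes the failure concrete: for the equidistant star on four leaves, $\mathcal{N}(\mathbf{d})$ is the non-binary star tree with $k^{*}=0$ nontrivial bridges, yet by symmetry all three vertices of BME$(4,0)$ (and likewise all three vertices of BME$(4,1)$) tie under the dot product, so no value of $k$ produces a unique minimizer. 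The corollary as stated therefore needs a genericity hypothesis on $\mathbf{d}$, or equivalently the assumption that $L(\overline{\mathcal{N}(\mathbf{d})})$ is binary---exactly the caveat you flag. Your non-uniqueness argument for $k<k^{*}$ is also somewhat informal (``admits multiple combinatorial realizations'' should be pinned down to an explicit pair of distinct binary $k$-bridge refinements of $M$), but that is the easier half and can be made rigorous without difficulty by absorbing a chosen bridge of $M$ into a $4$-cycle in two inequivalent orientations.
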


\begin{example}\label{ex1}
Consider the weighted network $N$ in Figure~\ref{fig:galois_w}. Figure~\ref{fig:bigo_k1} shows two binary networks $\hat{N}, \hat{N}'$ with $k=1$ bridge such that $\overline{S_w(N)} \le \Sigma(\hat{N})$ and $\overline{S_w(N)} \le \Sigma(\hat{N}').$ The weight $W(S_w(N)) = 21.5$, and (both) dot products $\mathbf{x}(\hat{N})\cdot \mathbf{d}_N = 4(21.5) = 86.$
\begin{figure}
    \centering
    \includegraphics[width=\textwidth]{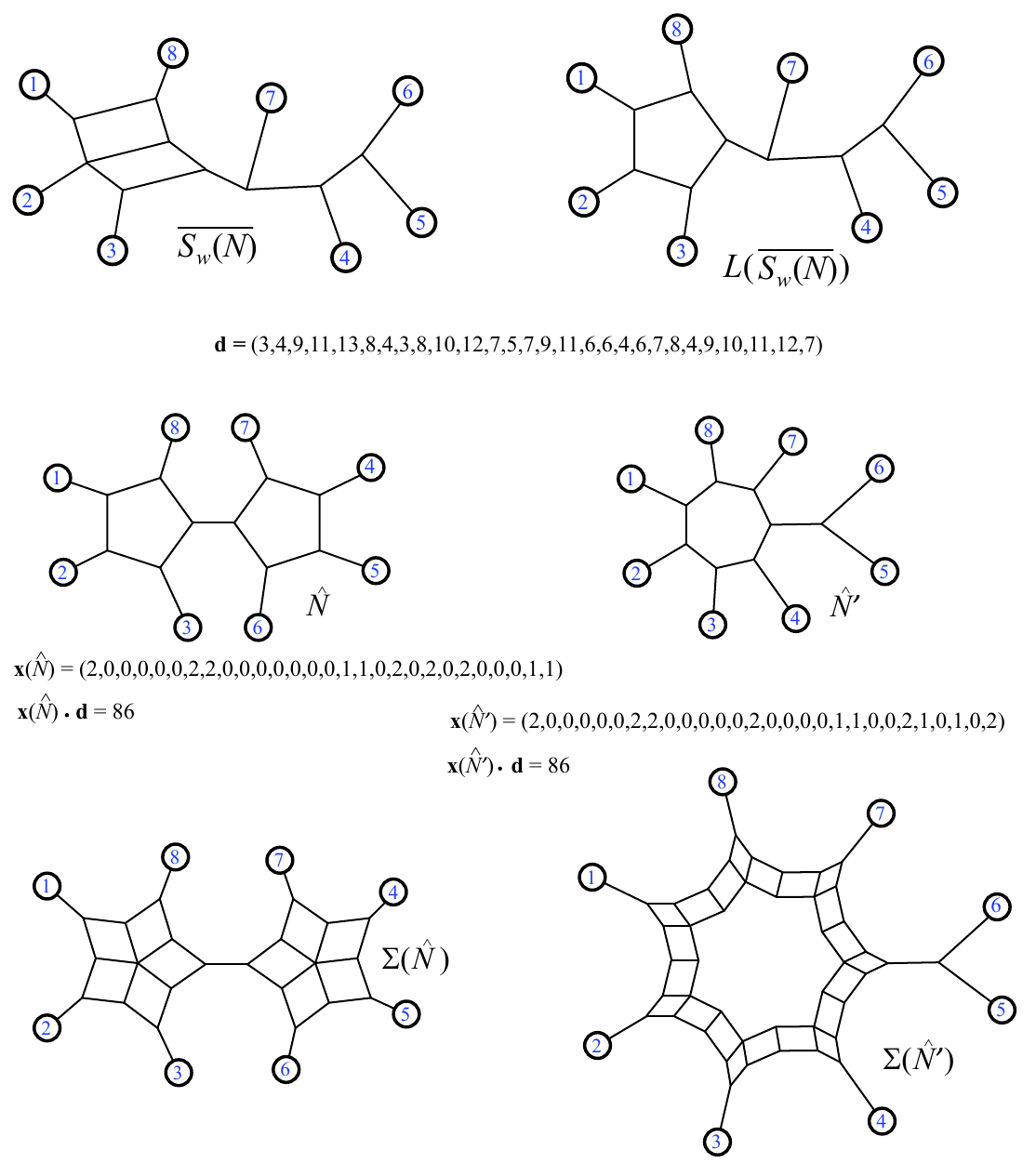}
   \caption{$\hat{N}$ and $\hat{N}'$ are two minimizing vertices as predicted by Theorem~\ref{newth}, for Example~\ref{ex1}.}
    \label{fig:bigo_k1}
\end{figure}
\end{example}

\begin{example}\label{ex2}
Again consider the weighted network $N$ in Figure~\ref{fig:galois_w}. Figure~\ref{fig:bigo_k2} shows two binary networks $\hat{N}_1, \hat{N}_2$ with $k=2$ bridges such that $\overline{S_w(N)} \le \Sigma(\hat{N}_1)$ and $\overline{S_w(N)} \le \Sigma(\hat{N}_2).$ The weight $W(S_w(N)) = 21.5$, and (both) dot products $\mathbf{x}(\hat{N})\cdot \mathbf{d}_N = 8(21.5) = 172.$
\begin{figure}
    \centering
    \includegraphics[width=\textwidth]{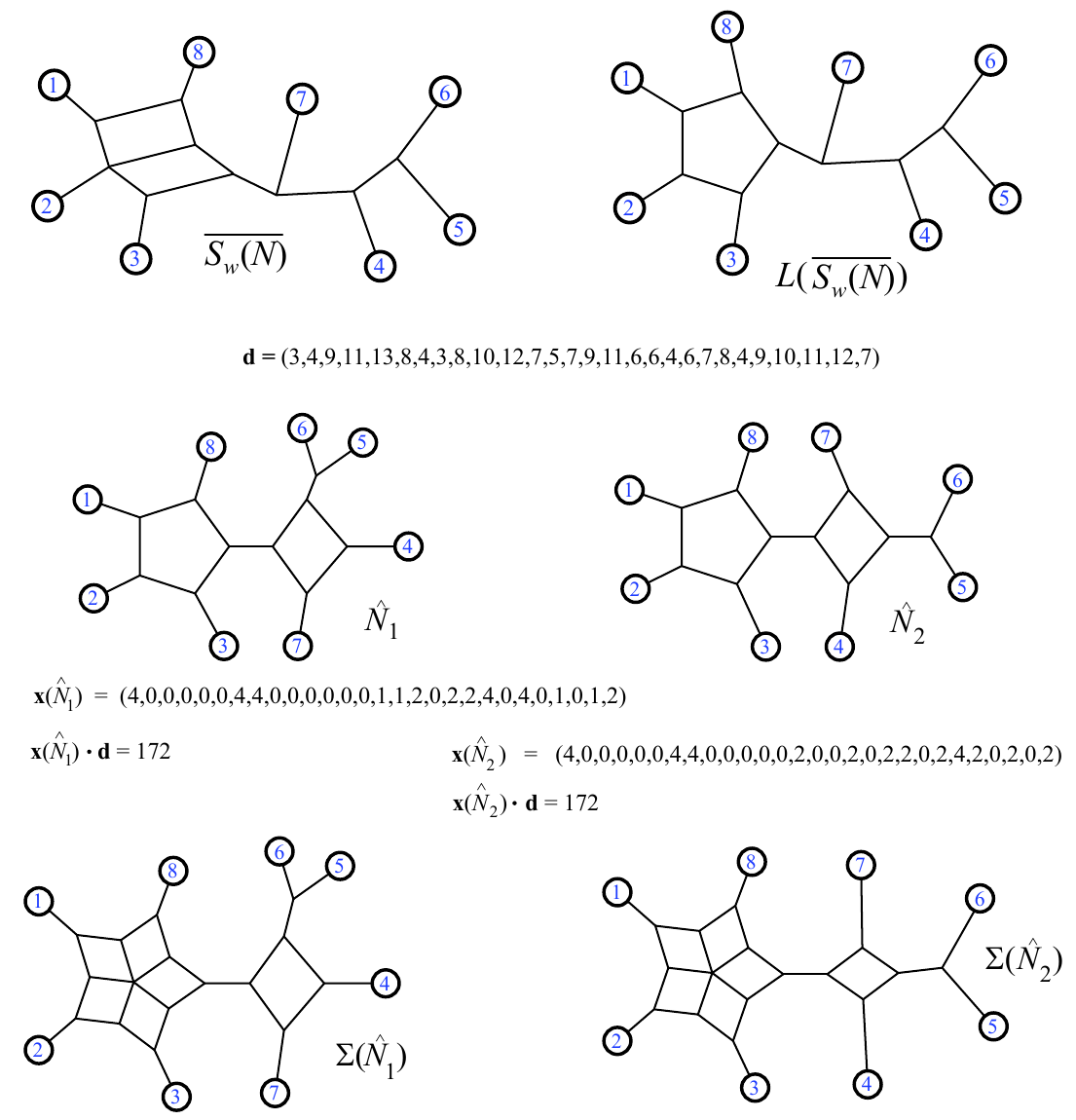}
    \caption{$\hat{N}_1$ and $\hat{N}_2$ are two minimizing vertices as predicted by Theorem~\ref{newth}, for Example~\ref{ex2}.}
    \label{fig:bigo_k2}
\end{figure}

\end{example}

\begin{example}\label{ex3}
Again consider the weighted network $N$ in Figure~\ref{fig:galois_w}. Figure~\ref{fig:bigo_kb} shows two binary networks $\hat{N}, \hat{N'}$ which are missing splits that are displayed by $S_w(N)$. On the left the network does not display the split $\{5,6\}|\{1,2,3,4,7,8\}$.  On the right the network does not display the split $\{1,8\}|\{2,3,4,5,6,7\}$. Their respective dot products are larger than for the minimizing vertices shown in the previous two examples. 
\begin{figure}
    \centering
   \includegraphics[width=\textwidth]{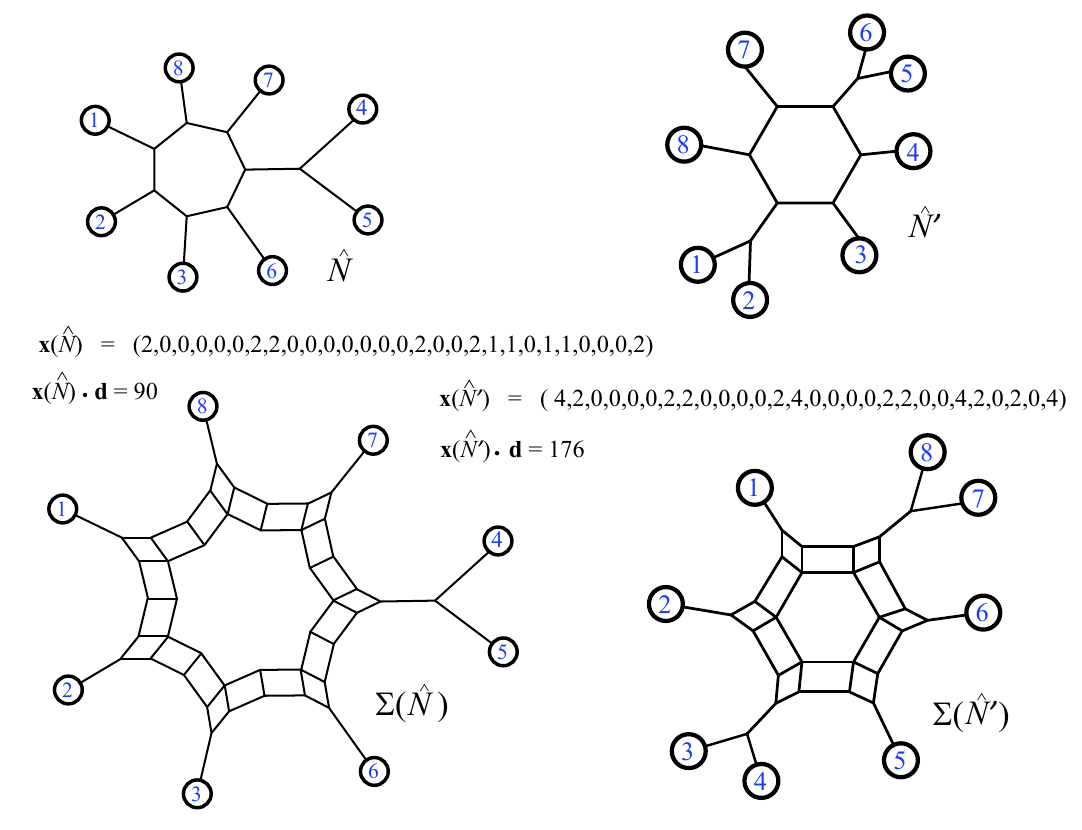}
    \caption{$\hat{N}$ and $\hat{N}'$ are two non-minimizing vertices as predicted by Theorem~\ref{newth}, for Example~\ref{ex3}.}
    \label{fig:bigo_kb}
\end{figure}

\end{example}

\begin{example}\label{ex4}
In Figure~\ref{bito} we show some of the 1-nested phylogenetic networks from the previous examples, with some other networks for context. They are arranged in the containment order of the faces of BME(8,2) from top to bottom. Figure~\ref{bitopc} show the same portion of the poset, but with the representative $PC$-trees. 
\begin{figure}
    \centering
    \includegraphics[width=\textwidth]{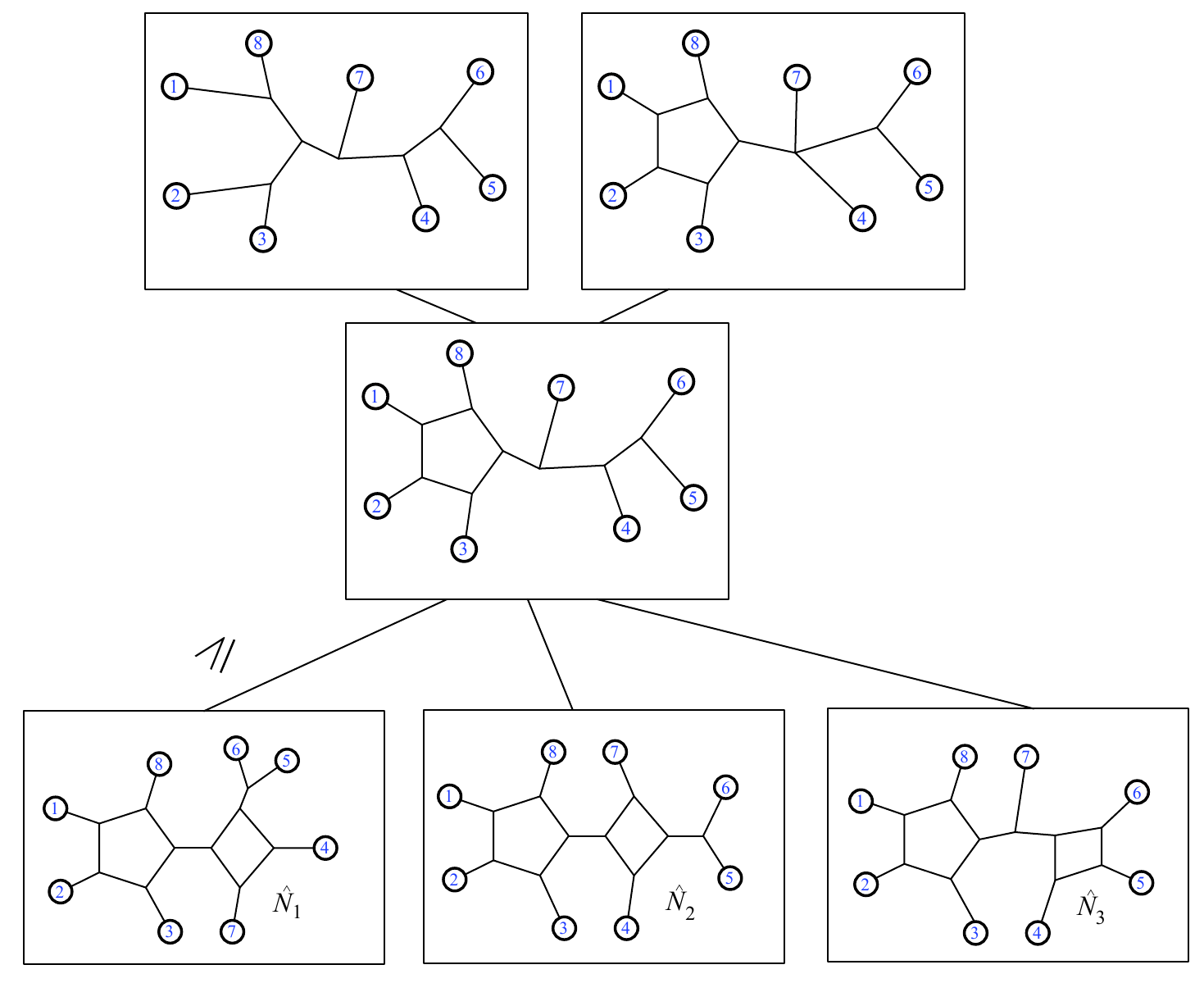}
    \caption{ Faces higher on the page contain lower faces  they are connected to by lines,  in BME($8,k$) for $k \le 2$. (Containment of sets of displayed splits is in the other direction.) In the center is a vertex of BME($8,3$). However it is also a face of BME(8,2),  BME(8,1) and BME (8,0). At the bottom are binary 1-nested networks: vertices of BME(8,2), thus displaying as many splits as possible with two bridges. Note that the relations shown here may not be covering relations in the polytopes. }
    
    \label{bito}
\end{figure}
 \end{example}

\begin{figure}
    \centering
    \includegraphics[width=\textwidth]{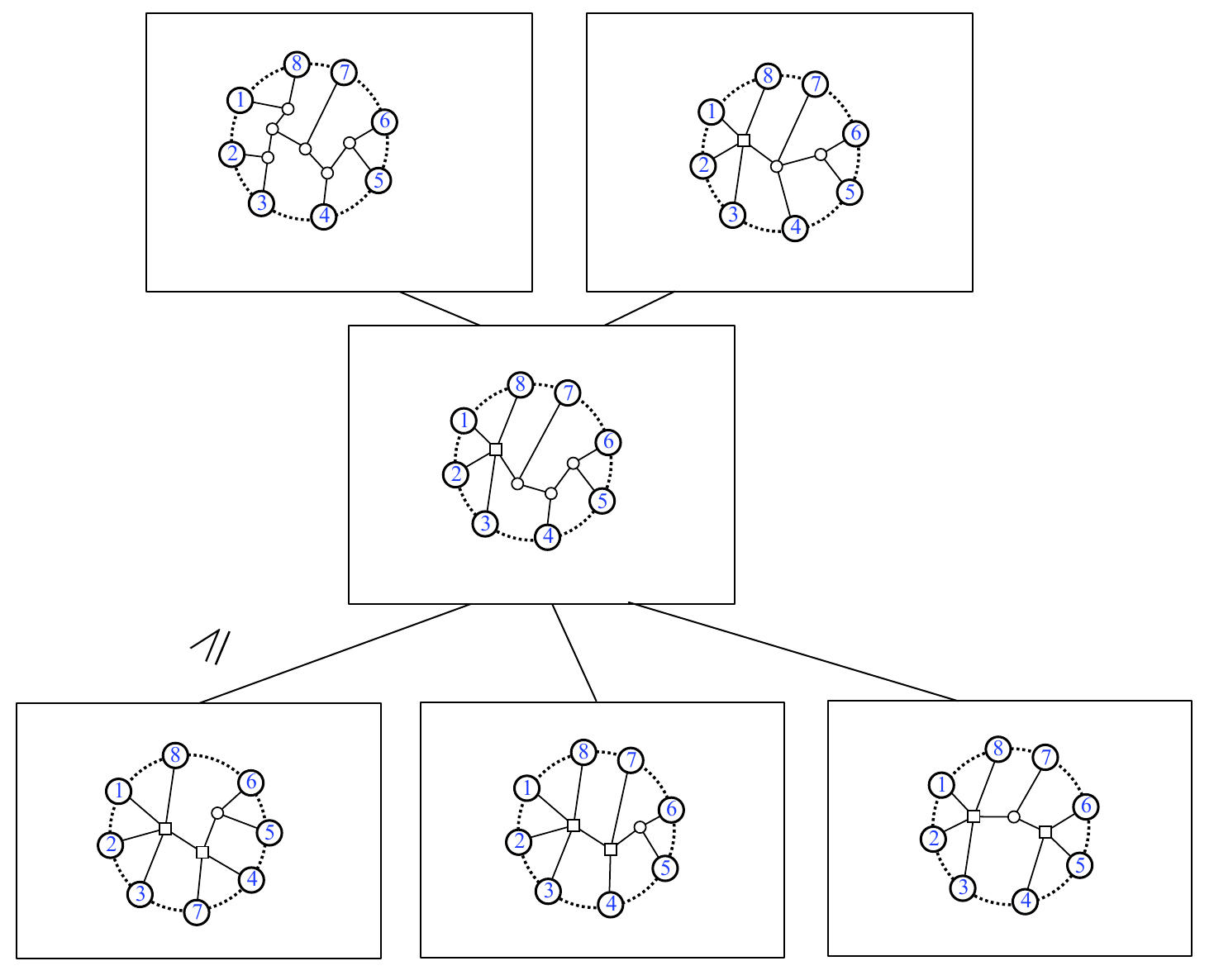}
    \caption{These $PC$-trees are the ones in correspondence with the 1-nested networks in the same respective positions in Figure~\ref{bito}.}
    
    \label{bitopc}
\end{figure}

\bibliographystyle{amsplain}
\bibliography{galois}{}

\providecommand{\bysame}{\leavevmode\hbox to3em{\hrulefill}\thinspace}
\providecommand{\MR}{\relax\ifhmode\unskip\space\fi MR }
\providecommand{\MRhref}[2]{%
  \href{http://www.ams.org/mathscinet-getitem?mr=#1}{#2}
}
\providecommand{\href}[2]{#2}
\begin{thebibliography}{10}

\bibitem{bergeron_book}
F.~Bergeron, G.~Labelle, and P.~Leroux, \emph{Combinatorial species and
  tree-like structures}, Encyclopedia of Mathematics and its Applications,
  vol.~67, Cambridge University Press, Cambridge, 1998, Translated from the
  1994 French original by Margaret Readdy, With a foreword by Gian-Carlo Rota.
  \MR{1629341}

\bibitem{bhv}
L.~Billera, S.~Holmes, and K.~Vogtmann, \emph{Geometry of the space of
  phylogenetic trees}, Adv. in Appl. Math. \textbf{27} (2001), no.~4, 733--767.
  \MR{1867931}

\bibitem{newgamb}
Mathilde Bouvel, Philippe Gambette, and Marefatollah Mansouri, \emph{Counting
  {P}hylogenetic {N}etworks of {l}evel 1 and 2}, arXiv (2019).

\bibitem{Bryant2007}
David Bryant, Vincent Moulton, and Andreas Spillner, \emph{Consistency of the
  neighbor-net algorithm}, Algorithms for Molecular Biology \textbf{2} (2007),
  no.~1, 8.

\bibitem{dantzig}
G.~Dantzig, R.~Fulkerson, and S.~Johnson, \emph{Solution of a large-scale
  traveling-salesman problem}, J. Operations Res. Soc. Amer. \textbf{2} (1954),
  393--410. \MR{70932}

\bibitem{dev-petti}
S.~Devadoss and S.~Petti, \emph{A space of phylogenetic networks}, SIAM Journal
  on Applied Algebra and Geometry \textbf{1} (2017), 683--705.

\bibitem{basic}
Andreas Dress, Katharina~T. Huber, Jacobus Koolen, Vincent Moulton, and Andreas
  Spillner, \emph{Basic phylogenetic combinatorics}, Cambridge University
  Press, Cambridge, 2012. \MR{2893879}

\bibitem{durell-forcey}
Cassandra Durell and Stefan Forcey, \emph{Level-1 phylogenetic networks and
  their balanced minimum evolution polytopes}, J. Math. Biol. \textbf{80}
  (2020), no.~5, 1235--1263. \MR{4071414}

\bibitem{primer}
M.~Ern\'{e}, J.~Koslowski, A.~Melton, and G.~E. Strecker, \emph{A primer on
  {G}alois connections}, Papers on general topology and applications
  ({M}adison, {WI}, 1991), Ann. New York Acad. Sci., vol. 704, New York Acad.
  Sci., New York, 1993, pp.~103--125. \MR{1277847}

\bibitem{forcey2015facets}
S.~Forcey, L.~Keefe, and W.~Sands, \emph{Facets of the balanced minimal
  evolution polytope}, Journal of Mathematical Biology \textbf{73} (2016),
  no.~2, 447--468.

\bibitem{splito}
S.~Forcey, L.~Keefe, and W.~Sands, \emph{Split-facets for balanced minimal
  evolution polytopes and the permutoassociahedron}, Bulletin of Mathematical
  Biology \textbf{79} (2017), no.~5, 975--994.

\bibitem{Gambette2017}
P.~Gambette, K.~T. Huber, and G.~E. Scholz, \emph{Uprooted phylogenetic
  networks}, Bulletin of Mathematical Biology \textbf{79} (2017), no.~9,
  2022--2048.

\bibitem{gambette-huber}
\bysame, \emph{Uprooted phylogenetic networks}, Bull. Math. Biol. \textbf{79}
  (2017), no.~9, 2022--2048. \MR{3685182}

\bibitem{grpa1}
Martin Gr\"{o}tschel and Manfred~W. Padberg, \emph{On the symmetric travelling
  salesman problem. {I}. {I}nequalities}, Math. Programming \textbf{16} (1979),
  no.~3, 265--280. \MR{533907}

\bibitem{grpa2}
\bysame, \emph{On the symmetric travelling salesman problem. {II}. {L}ifting
  theorems and facets}, Math. Programming \textbf{16} (1979), no.~3, 281--302.
  \MR{533908}

\bibitem{kleinman}
Aaron Kleinman, Matan Harel, and Lior Pachter, \emph{Affine and projective tree
  metric theorems}, Ann. Comb. \textbf{17} (2013), no.~1, 205--228.
  \MR{3027578}

\bibitem{rodin}
K.~S. Ruland and E.~Y. Rodin, \emph{Survey of facial results for the traveling
  salesman polytope}, Math. Comput. Modelling \textbf{27} (1998), no.~8,
  11--27. \MR{1622652}

\bibitem{Steel}
Charles Semple and Mike Steel, \emph{Cyclic permutations and evolutionary
  trees}, Adv. in Appl. Math. \textbf{32} (2004), no.~4, 669--680. \MR{2053839
  (2005g:05042)}

\bibitem{oeis}
N.~J.~A. Sloane, \emph{The on-line encyclopedia of integer sequences}, 2018,
  published electronically at {\tt www.oeis.org}.

\bibitem{steelphyl}
Mike Steel, \emph{Phylogeny---discrete and random processes in evolution},
  CBMS-NSF Regional Conference Series in Applied Mathematics, vol.~89, Society
  for Industrial and Applied Mathematics (SIAM), Philadelphia, PA, 2016.
  \MR{3601108}

\bibitem{terhorst}
Jonathan Terhorst, \emph{The {K}almanson {C}omplex}, arXiv (2011).

\end{thebibliography}

\end{document}